\theoremstyle{plain}
\theoremstyle{plain}
\theoremstyle{definition} 
\newtheorem{definition}{Definition}
\newtheorem{theorem}{Theorem}
\newtheorem{lemma}[theorem]{Lemma}
\newtheorem{corollary}[theorem]{Corollary}
\newtheorem{proposition}[theorem]{Proposition}
\newtheorem{conjecture}[theorem]{Conjecture}
\theoremstyle{remark}
\newtheorem{remark}[theorem]{Remark}
\newcommand{\R}{\mathbf{R}}
\newcommand{\Z}{\mathbf{Z}}
\newcommand{\de}{\delta}
\newcommand{\longsquiggly}{\xymatrix{{}\ar@{~>}[r]&{}}}
\newcommand{\innerproduct}[1]{\ensuremath{\langle #1 \rangle}}
\newcommand{\norm}[1]{\left\lVert#1\right\rVert}
\newlength{\arrow}
\numberwithin{equation}{section}
\numberwithin{theorem}{section}
\numberwithin{definition}{section}
\newcommand{\burl}[1]{\textcolor{blue}{\url{#1}}}
\begin{document}

\begin{center}

  \textsc{}
\vspace{.65in}

{\Large{\textsc{{Fredholm Theory and Optimal Test Functions for Detecting Central Point Vanishing Over Families of $L$-functions}}}}

\vspace{.9in}

\large{\textsc{Jesse Freeman}}

\vspace{2.4in}

%
%
%
%
%
%
%
\thispagestyle{empty}

\end{center}



\newpage

\pagestyle{plain}
\centerline{
\textsc{{Abstract}}
}

\bigskip 

 The Riemann Zeta-Function is the most studied $L$-function -- its zeros give information about the prime numbers. We can associate $L$-functions to a wide array of objects. In general, the zeros of these $L$-functions give information about those objects. For arbitrary $L$-functions, the order of vanishing at the central point is of particular importance. For example, the Birch and Swinnerton-Dyer conjecture states that the order vanishing at the central point of an elliptic curve $L$-function is the rank of the Mordell-Weil group of that elliptic curve. 
  
  The Katz-Sarnak Density Conjecture states that this order vanishing (and other behavior) are well-modeled by random matrices drawn from the classical compact groups. In particular, the conjecture states that an average order vanishing (over a ``family'' of $L$-functions) can bounded using only a given weight function and a chosen test function $\phi$. The conjecture is known for many families when the test functions are suitably restricted.  

  It is natural to ask which test function is best for each family and for each set of natural restrictions on $\phi$. Our main result is a reduction of an otherwise infinite-dimensional optimization to a finite-dimensional optimization problem for all families and all sets of restrictions. We explicitly solve many of these optimization problems and compute the improved bound we obtain on average rank. While we do not verify the density conjecture for these new, looser restrictions, with this project, we are able to precisely quantify the benefits of such efforts with respect to average rank. Finally, we are able to show that this bound stictly improves as we increase support.  

\newpage

\section{Introduction}
\subsection{Background: $L$-functions and random matrices}
Our interest in random marices begins with the connections observed by Montgomery and Dyson \cite{Mon} in the 1970s. The two discovered that pair correlation of zeros of the Riemann Zeta-Function was identical to random matrix models that had been extensively studied in physics. More generally, the eigenvalues of random matrices drawn from the Haar measure on classical compact groups. We concentrate on low-lying zeros, i.e. zeros near the central point, over families of $L$-functions. However, other statistics, including $n$-level correlations \cite{Hej,Mon,RS}, spacings \cite{Od1,Od2}, and moments \cite{CFKRS}. (See \cite{FM,Ha} for a brief history of the subject and \cite{Con,For,KaSa1,KaSa2,KeSn1,KeSn2,KeSn3,Meh,MT-B,T}) for some articles and textbooks on the connections. 

In earlier work studying zeros of $L$-functions, most of the statistics used were insensitive to the behavior of finitely many zeros. But, the order of single zeros, especially the zero at the central point, is sometimes tantamount. The most natural example of this phenomenon is the Birch and Swinnerton-Dyer conjceture, which states that the order vanishing of an elliptic curve $L$-function at the central point equals the rank of the Mordell-Weil group of that curve. So, on the opposite end of the spectrum is the $n$-level density, which, for suitably chosen test functions, essentially reflects only the behavior of the low-lying zeros. Indeed, the main application of our results is to improved estimates on average order vanishing across families of $L$-functions. But, the pursuit of optimal test functions in this domain has other applications a well (for example, in \cite{IS} good estimates here are connected to the Landau-Siegel zero question). 

In our analysis, we concentrate on limiting behavior (as the conductor approaches infinity). This is the setting in which lower-order terms can be conclusively dealt with and for which the density conjecture has been verified in some cases (see \cite{ILS}). However, the rate of convergence to this behavior is quite slow. (See \cite{BMSW} for a nice summary of data and conjectures). We hope that with the new results on lower order terms in families (such as \cite{HKS,MMRW,Mil2,Yo1}), the results of this thesis can be extended to include these to refine estimates for finite conductors. We invite the reader to examine the introduction of \cite{FrM}, on which this introduction is based, for a more detailed discussion of the literature. 
\subsection{One Level Density}
Away from the central central, the zeros of $L$-functions seem to exhibit universal behavior, in an average sense. Near the central point, there are few zeros and thus there is no hope of averaging when examining a single $L$-function. So, we study families of $L$-functions, indexed by conductor and symmetry group. Broadly, the Katz-Sarnak philosophy \cite{KaSa1,KaSa2} posits that the behavior of a family of $L$-functions should be well-modeled by a corresponding classical compact group, with the conductor of the family tending to infinity (as the matrix size grows in the physics analogue). 

Throughout, we assume a Generalized Riemann Hypothesis, so that for an $L(s,f)$, all zeros are of the form $1/2 + i\gamma_{j;f}$, with $\gamma_{j;f}$ real. While the $n$-level density makes sense without this hypothesis, assuming GRH allows us to extend the support calculation for many of the number theory computations. We will introduce only the one-level density, as do not engage with higher level densities in this work. The one level density for $f$ with a test function $\phi$ is 
\begin{equation}
  D_{1}(f,\phi) = \sum_{j_{i}}^{} \phi(L_{f \gamma_{j_{i}; f}})
  \label{OneLevelDensityDef}
\end{equation}
where $L_{f}$ is a scaling parameter frequently related to the conductor. Given a family $\mathscr{F}$, we may consider 
\begin{equation}
  \mathscr{F}(Q) = \left\{ f \in \mathscr{F}; c_{f} \le Q \right\}
  \label{ConductorSubsetDef}
\end{equation}
where $c_{f}$ is the conductor of $f$. We assume that $\mathscr{F}$ has many independent forms relative to conductors so that $\left \vert \mathscr{F}(Q) \right \vert \to \infty$ as $Q \to \infty$. Then, the density conjecture (considered only in the one-level case) states that 
\begin{equation}
  \lim_{Q \to \infty} \frac{1}{\left \vert \mathscr{F}(Q) \right \vert} \sum_{f \in \mathscr{F}(Q)}^{} D_{1}(f,\phi) \ = \ \int_{-\infty}^{\infty} \phi(x) W(\mathscr{F})(x) \ dx
  \label{DensityConjecture}
\end{equation}
where $W(\mathscr{F})$ is a distribution depending on $\mathscr{F}$. \\

The seminal paper \cite{ILS} verifies this conjecture for holomoprhic cusp forms of weight $k$ which are newforms for level $N$, where $N$ is square-free and the test function $\phi$ is restricted by $\textrm{support}(\hat{\phi}) \subset (-2,2)$. 

\subsection{Bounding Average Rank} We now briefly describe the technical details of our main application of the Density Theorems, bounding the average order vanishing. The exposition here follows closely that of Remark $E$ in \cite{ILS}. Let
\begin{equation}
  p_{m}(Q) = \frac{1}{\left \vert \mathscr{F}(Q) \right \vert}\left\{ f \in \mathscr{F}(Q); \quad \underset{s = 1/2}{\textrm{ord}}L(s,f)  = m \right\}
  \label{PercentVanishingToM}
\end{equation}
so that 
\begin{equation}
  \sum_{m=0}^{\infty} p_{m}(Q) \ = \ 1. 
  \label{FactAboutPercentsVanishingToM}
\end{equation}
Recall that we choose $\phi \ge 0$ with $\phi(0) = 1$ and support $\hat{\phi}$ compact. So, by \eqref{DensityConjecture} and the Plancherel Theorem, which states that
\begin{equation}
  \int_{-\infty}^{\infty} \phi(x)W(\mathscr{F})(x) \ dx \ = \ \int_{-\infty}^{\infty} \widehat{\phi}(y)\widehat{W}(\mathscr{F})(y) \ dy,
  \label{Plancherel}
\end{equation}
one can derive,
\begin{equation}
  \sum_{m=1}^{\infty} mp_{m}(Q) < g + \varepsilon
  \label{AvgRankBound}
\end{equation}
for any $\varepsilon > 0$, provided $Q$ is large, where 
\begin{equation}
  g := \int_{-\infty}^{\infty} \widehat{\phi}(y)\widehat{W}(\mathscr{F})(y) \ dy
  \label{BoundAvgRank}
\end{equation}
which implies the upper bound
\begin{equation}
  p_{m}(Q) < m^{-1}(g + \varepsilon)
  \label{NaiveUpperBoundPmQ}
\end{equation}
for any $m \ge 1$. Note also that subtracting \eqref{AvgRankBound} from \eqref{FactAboutPercentsVanishingToM} gives the lower bound
\begin{equation}
  p_{0}(Q) > 1 - g - \varepsilon.
  \label{LowerBoundOnP0Q}
\end{equation}
By breaking up families of $L$-functions with respect to the parity of the functional equaiton, one can obtain better estimates. These details may be found in \cite{ILS}, also in their Remark $E$. 
\subsection{Setup of Our Problem}
\label{BackgroundSection}

Throughout this paper, as in \cite{ILS}, $\phi$ will be a Schwartz class function whose Fourier transform
\begin{equation}
  \hat{\phi}(\xi) = \int_{-\infty}^{\infty} \phi(x)e^{-2\pi i x \xi} dx
  \label{DefFourierTransform}
\end{equation}
has compact support.  \\

Let $W(x) \ge 0$ be a function on $\R$ whose Fourier transform
$\widehat{W}(\xi)$ is known in $[-2\sigma,2\sigma]$ for $\sigma > 0 $. We want to determine
\begin{equation}
\inf_{\phi}\frac{\int_{-\infty}^\infty \phi(x)W(x)dx}{\phi(0)}
  \label{Minimize}
\end{equation}
such that $\phi \ge 0$, $\phi \in L^1(\R)$, $\phi(0) > 0$, and support$(\hat{\phi}) \subset [-2\sigma,2\sigma]$. When $\phi$ satisfies these conditions, we say $\phi$ is \textit{admissible}. \\

The weight functions associated to the classical compact groups are
\begin{align}
  W(\textrm{O}) \ &= \ 1 + \frac{1}{2} \delta_{0}(x), \notag \\
  W(\textrm{SO(Even)})(x) \ &= \ 1 + \frac{\sin 2 \pi x}{2 \pi x}, \notag \\
  W(\textrm{SO(Odd)})(x) \ &= \ 1 - \frac{\sin 2 \pi x}{2 \pi x} + \delta_{0}(x), \notag \\
  W(Sp)(x) \ &= \ 1 - \frac{\sin 2 \pi x}{2 \pi x}.  
  \label{GroupWeightFunctions}
\end{align}
Above, $\delta_{0}(x)$ is the Dirac distribution at $x = 0$. 

We will examine the Fourier transforms of the density functions of the weight functions in \eqref{GroupWeightFunctions}. These are given by 
\begin{equation}
  \label{eq:1}
  \widehat{W}(\xi) = \de_0 + m(\xi)
\end{equation}

where we have 
\begin{align}
  m(\mathrm{SO(even))}(\xi) &= \frac{1}{2}\mathrm{I}_{[-1,1]}(\xi) \notag \\
  m(\mathrm{SO(odd))}(\xi) &= 1 - \frac{1}{2}\mathrm{I}_{[-1,1]}(\xi) \notag \\
  m(Sp)(\xi) &= -\frac{1}{2}\mathrm{I}_{[-1,1]}(\xi) \notag \\
  m(O)(\xi) &= \frac{1}{2} \label{mfunctions}
\end{align}
and $\mathrm{I}$ is the indicator function. \\

In Section \ref{GallagherSection}, we show
\begin{itemize}
\item $\phi(z) = |h(z)|^2$, where $h(z)$ is entire and exponential of
  type $2 \sigma$. 
\item $\hat{\phi} = (g * \check{g})(\xi)$, where $\check{g} =
  \overline{g(-\xi)}$ and support$(g) \subset [-\sigma,\sigma]$.  Here, $\ast$ denotes convolution. 
\end{itemize}

Section \ref{arbsupportsec} shows there exists a unique optimal test function for all $\sigma > 0$. Section \ref{OptCritSect} provides an optimality criterion used to find this function. This natural criterion is a condition on the function $g$ such that $g \ast \check{g} = \hat{\phi}$. In general, we do not find the optimal test functions directly; they are unwieldy and not necessary for computing the bounds on average rank. Instead, we find the optimal $g$. This is why we cut against notational convention and write $\textrm{support}(\hat{\phi}) \subset [-2 \sigma, 2 \sigma]$ -- $g$ will be supported in $[-\sigma,\sigma]$. \\

Section \ref{orthogsec} is short and finds the optimal functions for the orthogonal group for all levels of support. This problem is trivial relative to the general problem and requires none of the methods we develop in later sections. In Section \ref{smoothnesssection}, we uncover smoothness facts about the optimal $g$, crucial to our approach. Sections \ref{smallsupportsection} and \ref{-33section} find optimal test functions for all groups and all $0 \le 2 \sigma \le 3$, recovering the results of VanderKam in Appendix A of \cite{ILS} and laying out several examples of our general method. We present our method in full generality in Section \ref{findimreductionsection}, reducing the problem of finding the optimal test function for all groups and all $\sigma$ to a finite-dimensional problem which scales piecewise-linearly with $\sigma$. Section \ref{-44section} uses the general results of \ref{findimreductionsection} to compute a further family of examples; $3 \le 2 \sigma \le 4$. \\

Following the arguments of \cite{ILS}, $g$ is equal to the solution to the
integral equation 
\begin{equation}
  \label{eq:0}
  f_0(x) + \int_{-\sigma}^{\sigma}m(x-y)f_0(y) = 1.  
\end{equation}
Before finding the optimal test functions for extended support, we
prove a simple consequence of Gallagher's argument \cite{ILS}. 
\begin{proposition}
\label{invertg}
$\mathcal{F}^{-1}(g) = \overline{\mathcal{F}^{-1}(\check{g})}$.  
\end{proposition}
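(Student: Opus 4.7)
The plan is to verify the identity by a direct unpacking of the definitions, relying only on the convention \eqref{DefFourierTransform} for the Fourier transform and on the paper's definition $\check{g}(\xi) = \overline{g(-\xi)}$. Because $g$ is supported in the compact interval $[-\sigma,\sigma]$, every integral in sight is finite and every interchange of conjugation with integration is legitimate, so there are no analytic subtleties to worry about.

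First I would write out $\mathcal{F}^{-1}(\check{g})(x)$ as an integral against $e^{2\pi i x \xi}$, substitute $\overline{g(-\xi)}$ for $\check{g}(\xi)$, and then perform the change of variables $u = -\xi$. The Jacobian flips the limits back into standard orientation and the exponential becomes $e^{-2\pi i x u}$. Next I would pull the complex conjugate outside the integral, using the identity $\overline{e^{2\pi i x u}} = e^{-2\pi i x u}$ for $x,u \in \R$, thereby recognizing the remaining integral as $\mathcal{F}^{-1}(g)(x)$. This shows
\[
\mathcal{F}^{-1}(\check{g})(x) \;=\; \overline{\mathcal{F}^{-1}(g)(x)},
\]
and conjugating both sides yields the conclusion of the proposition.

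There is essentially no main obstacle here: the result is a formal consequence of the fact that the involution $g \mapsto \check{g}$ on the frequency side intertwines with complex conjugation on the spatial side, and this intertwining is built into the way $\check{g}$ has been defined. The statement is recorded because it will be used freely in the subsequent sections (in particular to extract real-valued information from the relation $\hat{\phi} = g \ast \check{g}$), rather than because it requires any substantial idea to establish.
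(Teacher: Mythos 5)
Your proposal is correct and is essentially the paper's own argument: both unpack $\mathcal{F}^{-1}(\check{g})$ from the definition $\check{g}(\xi)=\overline{g(-\xi)}$, perform the substitution $u=-\xi$, and commute conjugation with the integral; only the order of the conjugation and the change of variables differs, which is immaterial. No gaps.
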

\begin{proof}
   We have 
  \begin{align*}
    \overline{\mathcal{F}^{-1}(\check{g})} \ &= \  \overline{\int_{-\infty}^{\infty} e^{2 \pi i \xi x} \overline{g(-\xi)}} \ d \xi \notag \\
    \ &= \ \int_{-\infty}^{\infty} e^{-2 \pi i \xi x} g(-\xi) d\xi \notag \\
    \ &= \ \int_{-\infty}^{\infty} e^{2 \pi i u x} g(u) du \tag{$u = - \xi$} \\ 
    \ &= \ \mathcal{F}^{-1}(g).
  \end{align*}

\end{proof}
\begin{corollary}\label{splitcor}
  $\phi(z) = \left \vert h(z) \right \vert^{2}$, where $h(z)$ is entire and exponential of type $\sigma$
\end{corollary}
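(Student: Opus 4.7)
The plan is to combine the factorization $\hat{\phi} = g \ast \check{g}$ recorded in the bullet points of Section \ref{BackgroundSection} with Proposition \ref{invertg}. Applying the inverse Fourier transform to $\hat{\phi} = g \ast \check{g}$ and invoking the convolution theorem gives
\begin{equation*}
\phi(z) \; = \; \mathcal{F}^{-1}(g \ast \check{g})(z) \; = \; \mathcal{F}^{-1}(g)(z) \cdot \mathcal{F}^{-1}(\check{g})(z).
\end{equation*}
Setting $h := \mathcal{F}^{-1}(g)$ and applying Proposition \ref{invertg} yields $\mathcal{F}^{-1}(\check{g}) = \overline{h}$ on $\R$, so $\phi(x) = h(x)\overline{h(x)} = |h(x)|^{2}$.

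Next I would invoke the Paley--Wiener theorem. Since $g \in L^{1}(\R)$ (or $L^{2}$, whichever framework the paper is implicitly using) and $\mathrm{support}(g) \subset [-\sigma, \sigma]$, the function
\begin{equation*}
h(z) \; = \; \int_{-\sigma}^{\sigma} g(\xi)\, e^{2\pi i \xi z}\, d\xi
\end{equation*}
extends to an entire function of exponential type $\sigma$ in the convention of the paper (i.e.\ of the usual exponential type $2\pi\sigma$). This is precisely the improvement over the $2\sigma$ bound already obtained in Section \ref{GallagherSection}: there one factors $\hat{\phi}$, whose support is the interval $[-2\sigma,2\sigma]$, whereas here one factors through $g$, whose support is half as wide.

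The main subtlety to take care of is that Proposition \ref{invertg} is stated for real arguments, so the identity $\phi = h\overline{h}$ is first established only on $\R$. To make sense of the conclusion $\phi(z) = |h(z)|^{2}$ as an identity of entire functions, I would extend the right-hand side by the standard recipe $h(z)\,\overline{h(\bar{z})}$, which is entire and agrees with $|h(x)|^{2}$ for $x \in \R$; since $\phi$ is itself entire (being the inverse Fourier transform of a compactly supported function), the two entire extensions agree everywhere. No other obstacle is anticipated; the corollary is essentially a bookkeeping improvement, with Proposition \ref{invertg} supplying the conjugation needed to identify the two factors produced by the convolution theorem.
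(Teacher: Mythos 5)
Your proposal is correct and follows essentially the same route as the paper: factor $\hat{\phi} = g \ast \check{g}$, set $h = \mathcal{F}^{-1}(g)$, use Proposition \ref{invertg} to identify the second factor with $\overline{h}$, and apply Paley--Wiener (Theorem \ref{PaleyWiener}) to the compactly supported $g$ to get exponential type $\sigma$. Your added remark about extending $|h(x)|^{2}$ to the entire function $h(z)\overline{h(\bar{z})}$ is a careful touch the paper leaves implicit, but it does not change the argument.
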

\begin{proof}
  As $\hat{\phi} = g \ast \check{g}$, $\phi = \mathcal{F}^{-1}(g) \cdot \mathcal{F}^{-1}(\check{g})$. Let $h(z):= \mathcal{F}^{-1}(g)$. \\

  By the Paley-Wiener Theorem (Theorem \ref{PaleyWiener}), $h$ is exponential of type $\sigma$. \\

  By proposition \ref{invertg}, $\phi = \left \vert h(z) \right \vert^{2}$.  
\end{proof}

\begin{lemma}\label{evenlemma}
  Suppose that a unique solution to (\ref{eq:0}) exists. Then, it is even. 
\end{lemma}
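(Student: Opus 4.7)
The plan is a standard symmetry-plus-uniqueness argument. First, I would record the key structural fact that the kernel $m$ appearing in (\ref{eq:0}) is even in all the cases of interest: inspecting (\ref{mfunctions}), each of $m(\mathrm{SO(even)})$, $m(\mathrm{SO(odd)})$, $m(Sp)$, and $m(O)$ is either a constant or a scalar multiple of the indicator $\mathrm{I}_{[-1,1]}$ shifted by a constant, all of which are manifestly even functions of $\xi$.

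Given that, define $\tilde f_0(x) := f_0(-x)$ and show $\tilde f_0$ is also a solution to (\ref{eq:0}). Substitute into the equation and change variables via $u = -y$:
\begin{align*}
\tilde f_0(x) + \int_{-\sigma}^{\sigma} m(x-y)\, \tilde f_0(y)\, dy
&= f_0(-x) + \int_{-\sigma}^{\sigma} m(x-y)\, f_0(-y)\, dy \\
&= f_0(-x) + \int_{-\sigma}^{\sigma} m(x+u)\, f_0(u)\, du \\
&= f_0(-x) + \int_{-\sigma}^{\sigma} m(-x-u)\, f_0(u)\, du,
\end{align*}
where the last equality uses the evenness of $m$. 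The final expression is exactly the left-hand side of (\ref{eq:0}) evaluated at $-x$, which equals $1$ by hypothesis. Hence $\tilde f_0$ satisfies (\ref{eq:0}).

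By the assumed uniqueness of the solution, $\tilde f_0 = f_0$, i.e.\ $f_0(-x) = f_0(x)$ for all $x$, so $f_0$ is even. I do not anticipate a substantive obstacle here: the only thing to be careful about is bookkeeping under the change of variables and ensuring the endpoints of integration flip correctly (which they do since the interval $[-\sigma,\sigma]$ is symmetric about $0$). The conceptual content is entirely the evenness of $m$, which, as noted above, holds for every symmetry group considered in this paper.
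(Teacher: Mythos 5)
Your proposal is correct and is essentially the same argument as the paper's: both show that the reflection of a solution is again a solution by using the evenness of $m$ together with the substitution $u=-y$, and then invoke uniqueness to conclude the solution is even. (If anything, your bookkeeping of the integration limits is cleaner than the paper's, which inadvertently writes $\pm 2\sigma$ in places.)
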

\begin{proof}
  The key is that $m$ is even. Suppose $g(x)$ is a solution to (\ref{eq:0}). Let $r(x) = g(-x)$. Then, 
  \begin{equation}
   r(x) + \int_{-\sigma}^{\sigma} m(-x-y)g(y) dy = 1.  
    \label{hconsequence}
  \end{equation}
Rearranging the above expression and making the substitution $u = -y$, we obtain
\begin{align*}
  r(x) &= 1 - \int_{-2\sigma}^{2\sigma} m(-x-y)g(y) dy \\
  &= 1 + \int_{2\sigma}^{-2\sigma} m(-x + u) g(-u) du \\
  &= 1 - \int_{-2\sigma}^{2\sigma} m(x - u) r(u) du. 
\end{align*}
The first line is a rearrangement of (\ref{hconsequence}). The second is a result of our substitution. The final line comes from the fact that $m$ is even and $r(u) = g(-u)$. \\  
However, we have just shown that $r$ satisfies (\ref{eq:0}). By our assumption of uniqueness, $g=r$. \\

These techniques also show that if $f$ is even and $g$ is defined by 
\begin{equation}
  g(x) = r(x) - \int_{-\sigma}^{\sigma}m(x-y)f(y) dy 
  \label{BanachSmallSupportContext}
\end{equation}
where $r,m$ are even and $\sigma$ is real, then $g$ is even. 
\end{proof}

%

\newpage

\section{General Form of $\hat{\phi}$ for the $1$-level via an Argument of Gallagher} 
\label{GallagherSection}
We now prove a theorem on the general form of the Fourier transform of $\phi$. We need three results from complex analysis. We will prove that $\hat{\phi}$ is the convolution of two functions of a certain exponential type. First, we define exponential type. 

\begin{definition}
  A function $F(z)$ is said to be of exponential type $\sigma > 0$ if for every $\varepsilon > 0$, there exists a constant $A_{\varepsilon}$ such that 
  \[\left \vert F(z) \right \vert \le A_{\varepsilon}e^{(\sigma + \varepsilon)|z|}. \] 
\end{definition}
\begin{theorem}[Paley-Wiener]
  Suppose $f(x) \in L^2(\R)$. Then, $f$ is the restriction of an entire function of exponential type $A$ if and only if $\textrm{support}(\hat{f}) \subseteq (-A,A)$ and $\hat{f} \in L^{2}(-A,A)$. 
  \label{PaleyWiener}
\end{theorem}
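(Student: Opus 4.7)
Paley--Wiener is a biconditional, and I would prove the two directions separately, using completely different techniques.

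For the easier \emph{sufficiency} direction (support of $\hat{f}$ in $(-A,A)$ forces $f$ to extend to an entire function of exponential type $A$), I would define
\[
F(z) \ := \ \int_{-A}^{A} \hat{f}(\xi)\, e^{2\pi i \xi z}\, d\xi,
\]
which agrees with $f$ on $\R$ by Fourier inversion. Entirety of $F$ follows either from Morera's theorem (swapping a closed-curve integral in $z$ with the $\xi$-integral by Fubini) or by differentiating under the integral sign; both are justified because the integrand is entire in $z$ and $\hat{f}$ is integrable on the finite interval $[-A,A]$ (by Cauchy--Schwarz, since $\hat{f} \in L^{2}[-A,A]$). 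The exponential-type bound is then a one-line estimate: for $\xi \in [-A,A]$ one has $|e^{2\pi i \xi z}| \le e^{2\pi A |\mathrm{Im}(z)|}$, so Cauchy--Schwarz gives $|F(z)| \le \sqrt{2A}\,\|\hat{f}\|_{L^{2}} e^{2\pi A|z|}$, yielding the desired type (up to the customary $2\pi$ normalization).

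For the \emph{necessity} direction, I would shift the contour in
\[
\hat{f}(\xi) \ = \ \int_{\R} f(x) e^{-2\pi i x \xi}\, dx
\]
into the upper or lower half-plane depending on the sign of $\xi$. The idea is that if $\xi$ lies outside the critical band, then on the shifted contour $\R + iy$ the integrand is dominated by $e^{(A - 2\pi|\xi|)|y|}$, which tends to $0$ as $|y| \to \infty$, forcing $\hat{f}(\xi) = 0$. Making this rigorous requires two ingredients: (i) a uniform pointwise bound on $|f(x+iy)|$ in each half-plane, which I would obtain by applying Phragm\'en--Lindel\"of to upgrade the $L^2$-control on $\R$ plus the a priori order-of-growth estimate of exponential type $A$ into a bound of the form $|f(x+iy)| \le C e^{A|y|}$; and (ii) showing the side-pieces of a rectangular contour $[-R,R] \cup ([-R,R] + iY)$ plus the two vertical segments vanish in the limit $R \to \infty$. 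The latter typically requires a density/approximation argument (e.g., truncate $\hat{f}$ or convolve $f$ with a suitable approximate identity) because pointwise $L^2$ decay on $\R$ does not directly yield integrable decay of $f(\pm R + it)$ on vertical segments.

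\textbf{Main obstacle.} The necessity direction is where all the work lies: the contour shift is not automatic, and the combination of Phragm\'en--Lindel\"of with the $L^{2}$ hypothesis is needed to convert growth-order information into the uniform pointwise bound required to dominate the shifted integrand. Once the pointwise bound is in hand, the actual vanishing of $\hat{f}$ outside $[-A,A]$ is a short calculation.
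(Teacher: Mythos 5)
The paper does not actually prove this statement: its ``proof'' is a citation to Rudin, Theorem 19.3, whose argument runs by integrating $f$ along rotated rays, assembling holomorphic pieces of a Laplace transform on overlapping half-planes, and continuing analytically to conclude that $\hat f$ vanishes off $[-A,A]$. Your outline is instead the rectangle-contour proof in the style of Stein--Shakarchi: the sufficiency direction as you describe it (Morera/Fubini plus the trivial bound $|e^{2\pi i\xi z}|\le e^{2\pi A|\mathrm{Im}\,z|}$ and Cauchy--Schwarz) is complete and correct up to the $2\pi$ normalization you flag, and the necessity direction via contour shifting is a legitimate alternative route to Rudin's, arguably more transparent, at the cost of the two technical ingredients you identify.

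The one genuine soft spot is in how you invoke Phragm\'en--Lindel\"of. That principle converts a \emph{sup-norm} bound on the boundary ray/line plus an order-of-growth hypothesis into a bound in the half-plane; it does not accept $L^2$-control on $\R$ as input, and $f\in L^2(\R)$ by itself does not make $f$ bounded on $\R$. So the step ``Phragm\'en--Lindel\"of upgrades the $L^2$-control on $\R$ \dots into $|f(x+iy)|\le Ce^{A|y|}$'' has a gap as stated. It is repairable by standard means, and in fact your own mollification idea is the cleanest fix if you deploy it \emph{before} rather than after this step: replace $f$ by $f_\delta(z)=f(z)\bigl(\tfrac{\sin(\pi\delta z)}{\pi\delta z}\bigr)^{2}$ (or similar), which is entire of exponential type $A+O(\delta)$, lies in $L^1\cap L^\infty$ on $\R$, and decays on horizontal lines, so that both the Phragm\'en--Lindel\"of hypothesis and the vanishing of the vertical segments become available; one then concludes $\widehat{f_\delta}=0$ off $[-A-O(\delta),A+O(\delta)]$ and lets $\delta\to 0$ using $f_\delta\to f$ in $L^2$. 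Alternatively, boundedness of $f$ on $\R$ can be extracted from $L^2(\R)$ plus exponential type via subharmonicity of $|f|^2$ and a Plancherel--P\'olya-type estimate, but that is a nontrivial lemma in its own right and should be stated as such rather than absorbed silently into the Phragm\'en--Lindel\"of step.
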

\begin{proof}
  See \cite{R}, Theorem 19.3. 
%
\end{proof}
Now that we know $\phi$ is of exponential type, we may invoke a Theorem of Ahiezer:

\begin{theorem}[Ahiezer]
  Let $F(z)$ be entire of and of exponential type, let $F(x) \ge 0$ on $\R$, and suppose that 
  \begin{equation}
   \int_{-\infty}^{\infty} \frac{\log^{+}F(x)}{1 + x^2} \ dx < \infty 
    \label{ReasonableDecay}
  \end{equation}
  where, 
  \[\log^{+}(a) = 
    \begin{cases}
      \log(a) \ &\textrm{if} \ a \ge 1 \\
      0 \ &\textrm{if} \ a < 1.
    \end{cases}
  \] 
  Then, there is an entire function $f(z)$ of exponential type without zeros in $\textrm{Im}(z) > 0$ such that $F(z) = f(z) \overline{f(\overline{z})}$. In particular, $F(x) = \left \vert f(x) \right \vert^{2}, x \in \R$. \cite{K}
  \label{RieszFejer}
\end{theorem}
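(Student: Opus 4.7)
The plan is to combine the Hadamard factorization theorem with a symmetry argument on the zeros of $F$, then assemble the desired $f(z)$ out of roughly ``half'' of the zeros of $F$. This is the classical Riesz--Fejer strategy, extended from trigonometric polynomials to the entire-function setting by controlling growth through the integral hypothesis (\ref{ReasonableDecay}).

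First I would note that an entire function of exponential type has order at most $1$, so the Hadamard factorization applies with Weierstrass primary factors of genus $1$:
\[ F(z) \ = \ C\, z^{2k}\, e^{\alpha z} \prod_n E_1(z/z_n), \qquad E_1(w) = (1-w)e^{w}, \]
where $\{z_n\}$ enumerates the non-zero zeros of $F$ with multiplicity. Since $F(x) \ge 0$ for $x \in \R$, $F$ is real on $\R$ and the Schwarz reflection principle gives $F(\bar z) = \overline{F(z)}$; hence $C > 0$, $\alpha \in \R$, every non-real zero is paired with its complex conjugate of equal multiplicity, and every real zero has even multiplicity (so $2k$ is the order of vanishing at $0$).

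Next I would use the pairing to define $f$ explicitly. Let $\{w_n\}$ consist of exactly one representative of each conjugate pair of non-real zeros, chosen in the closed lower half-plane $\operatorname{Im}(z) \le 0$, together with each real zero taken with half its (now even) multiplicity. Set
\[ f(z) \ := \ \sqrt{C}\, z^{k}\, e^{(\alpha/2) z} \prod_n E_1(z/w_n). \]
Then $\overline{f(\bar z)}$ is built, factor by factor, from the conjugate zeros $\overline{w_n}$ (which lie in the upper half-plane), so multiplying out identifies $f(z)\overline{f(\bar z)}$ with $F(z)$ on the nose. By construction $f$ has no zeros in $\operatorname{Im}(z) > 0$, giving the conclusion, and the pointwise statement $F(x) = |f(x)|^2$ for $x \in \R$ falls out of $\overline{f(\bar x)} = \overline{f(x)}$.

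The main obstacle is showing that the sub-product defining $f$ actually converges to an entire function of exponential type; a priori, splitting a canonical product of genus $1$ in half need not preserve the growth rate. This is exactly where hypothesis (\ref{ReasonableDecay}) enters. The integrability of $\log^+ F(x)/(1+x^2)$ together with exponential type places $F$ in the Cartwright class, and the standard Cartwright--Levinson theory then yields a Blaschke-type summability condition, $\sum_n |\operatorname{Im}(z_n)|/(1+|z_n|^2) < \infty$, on the zeros. This summability controls the canonical product restricted to a single half-plane, and the Poisson--Jensen formula applied to $\log|f|$ in the upper half-plane recovers the exponential-type bound on $f$ with the correct indicator. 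I would import this final analytic step from \cite{K} rather than rebuild it.
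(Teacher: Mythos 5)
The paper does not actually prove this statement; its ``proof'' is the citation to \cite{K}, p.~55, so there is nothing internal to compare against. Your outline is the classical Riesz--Fej\'er/Ahiezer argument and its reductions are sound: order $\le 1$ gives a genus-$1$ Hadamard product, nonnegativity on $\R$ gives $F(\bar z)=\overline{F(z)}$, conjugate-symmetric zeros, even multiplicity at real zeros, $C>0$ and $\alpha\in\R$, and the factor-by-factor identification $f(z)\overline{f(\bar z)}=F(z)$ with $f$ zero-free in the open upper half-plane is correct as stated. One refinement: mere convergence of the sub-product is not actually in doubt --- since the exponent of convergence of the zeros of $F$ is at most $1$, one has $\sum_n |w_n|^{-2}<\infty$, and genus-$1$ primary factors then converge absolutely and locally uniformly for any subset of the zeros --- so the only genuine issue is the one you isolate, namely that $f$ has \emph{finite exponential type}. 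That step is exactly where \eqref{ReasonableDecay} (the Cartwright-class hypothesis) is needed, via the Blaschke condition on the zeros together with the Lindel\"of-type control of $\sum_{|z_n|\le r} z_n^{-1}$, and importing it from \cite{K} is the same dependence the paper itself has, so your proposal is acceptable as a proof sketch resting on the cited source.
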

\begin{proof}
  See \cite{K}, page 55. 
\end{proof}
%
%

\begin{theorem}
  Let $\phi$ be an admissible test function for (\ref{Minimize}) with $\textrm{support}(\hat{\phi}) \subseteq (-2\sigma,2\sigma)$. Then $\hat{\phi}(\xi) = (g \ast \check{g})(\xi)$, where $\textrm{support}(g) \subseteq (-\sigma,\sigma), \ g \in L^{2}(-\sigma,\sigma)$ and
  \[ \check{g}(\xi) = \overline{g(-\xi)}. \]     
  \label{FormOfPhiHat}
\end{theorem}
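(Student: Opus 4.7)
The plan is to apply the two complex analysis tools already stated (Paley--Wiener and Ahiezer) in succession and then translate the resulting factorization of $\phi$ into a convolution factorization of $\hat{\phi}$.

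First I would promote $\phi$ to an entire function. Since $\hat{\phi}$ has compact support inside $(-2\sigma, 2\sigma)$ and $\phi$ is Schwartz, in particular $\hat{\phi} \in L^{2}(-2\sigma, 2\sigma)$, so Paley--Wiener (Theorem \ref{PaleyWiener}) supplies an entire extension of $\phi$ of exponential type $2\sigma$. Admissibility gives $\phi(x) \ge 0$ on $\R$, and Schwartz decay yields the logarithmic integrability hypothesis \eqref{ReasonableDecay} immediately (in fact $\log^{+}\phi(x)$ is bounded, so the integral over $dx/(1+x^2)$ is finite). This puts us in position to invoke Ahiezer.

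Next I would apply Theorem \ref{RieszFejer} to obtain an entire function $h$ of exponential type with $\phi(z) = h(z)\,\overline{h(\bar z)}$, so that $\phi(x) = |h(x)|^2$ on $\R$. The key bookkeeping step is to pin down the exponential type of $h$: since $\phi$ has type $2\sigma$ and the two factors in the Ahiezer factorization share the same type (swapping $z \leftrightarrow \bar z$ does not change growth), $h$ must be of type $\sigma$. In particular $h \in L^{2}(\R)$ (because $|h|^{2} = \phi$ is integrable), so a second application of Paley--Wiener in the reverse direction says that $\hat{h}$ is supported in $[-\sigma,\sigma]$ and lies in $L^{2}(-\sigma,\sigma)$. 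Set $g := \hat{h}$.

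Finally, I would convert the pointwise factorization $\phi = h \cdot \overline{h}$ into the convolution statement for $\hat{\phi}$. Taking Fourier transforms and using that $\widehat{fg} = \hat f * \hat g$ for Schwartz functions,
\begin{equation*}
\hat{\phi}(\xi) \;=\; \widehat{h \cdot \overline{h}}(\xi) \;=\; (\hat h * \widehat{\overline{h}})(\xi).
\end{equation*}
A short calculation (entirely parallel to the one in Proposition \ref{invertg}) gives $\widehat{\overline{h}}(\xi) = \overline{\hat h(-\xi)} = \overline{g(-\xi)} = \check{g}(\xi)$, which yields $\hat{\phi} = g * \check{g}$ with $g$ having the stated support and $L^{2}$ properties.

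The only real obstacle is verifying the hypotheses of Ahiezer cleanly (non-negativity, exponential type, and the logarithmic integrability), but each is either a direct restatement of admissibility or is immediate from Schwartz decay; the bookkeeping of exponential type when passing from $\phi$ to $h$ is the one place a reader could get confused, so I would make it explicit rather than leave it implicit.
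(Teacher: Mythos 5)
Your proposal is correct and follows essentially the same route as the paper: Paley--Wiener to get an entire extension of type $2\sigma$, verification of the logarithmic integrability hypothesis, Ahiezer's factorization $\phi = h\overline{h(\bar z)}$ with $h$ of type $\sigma$, the reverse Paley--Wiener direction to place $g = \hat h$ in $L^2(-\sigma,\sigma)$, and the product-to-convolution identity together with $\widehat{\overline h}(\xi) = \overline{\hat h(-\xi)}$. The only cosmetic difference is your justification of \eqref{ReasonableDecay} via Schwartz decay rather than the paper's bound $\log^{+}\phi \le \phi$ with $\phi \in L^{1}$; both are valid.
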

\begin{proof}
  We know from Theorem \ref{PaleyWiener} that $\phi$ is the restriction, to the real line, of an entire function of type $2\sigma$. As we require $\phi \ge 0$ for $x \in \R$ and $\phi \in L^{1}(\R)$, $\phi$ must satisfy \eqref{ReasonableDecay}. Since,
  \begin{align*}
    \int_{-\infty}^{\infty} \frac{\log^{+}\phi(x)}{1 + x^2} \ dx &\le \int_{-\infty}^{\infty} \log^{+}\phi(x) \ dx \\
    &\le \int_{-\infty}^{\infty} \phi(x) \\
    &< \infty. 
  \end{align*}
  It follows from Theorem \ref{RieszFejer} that $\phi(x) = h(z)\overline{h(\overline{z})}$ for some function $h$ of exponential type. Since $\phi$ is of exponential type $2\sigma$, $h$ is of exponential type $\sigma$. By the reverse direction of Theorem \ref{PaleyWiener}, $g:= \mathcal{F}(h)$ is supported in $(-\sigma, \sigma)$ and is in $L^{2}(-\sigma,\sigma)$. Note that $\mathcal{F}(f_1f_2) = \mathcal{F}(f_1) \ast \mathcal{F}(f_2)$. By Proposition \ref{invertg}, we know  $\mathcal{F}(\overline{h(\overline{x})}) = \check{g}(\xi)$. Hence,  
  \begin{align*}
	  \mathcal{F}(h(x) \overline{h(x)}) \ &= \ \mathcal{F}(h(x) \overline{h(\overline{x})}) \\
	 \ &= \ \mathcal{F}(h(x)) \ast \mathcal{F}(\overline{h}(\overline{x})) \\
	 \ &= \ g(\xi) \ast \check{g}(\xi)
  \end{align*}
  which completes the proof.
\end{proof}

\newpage

\section{Existence and Uniqueness For Arbitrary Support}\label{arbsupportsec}
%
%

The key to this section is the Fredholm alternative - a more powerful infinite dimensional analogue of the Fundamental Theorem of Linear Algebra. 

\begin{definition}
  A bounded linear operator $A:X \to X$ on a normed space $X$ is said to satisfy the \textit{Fredholm altenative} if $A$ is such that 
  \label{DefFredAlt}
     The nonhomogenous equations
      \[Ax = y, \quad \quad A^{\times}f = g \] 
      (where $A^{\times}$ is the adjoint operator of $A$) have solutions $x$ and $f$, respectively, for every given $y \in X$ and $g \in X'$, the solutions being unique. Equivalently, the corresponding homogenous equations
      \[Ax = 0 \quad A^{\times}f = 0 \] 
      have only the trivial solutions $x = 0$ and $f = 0$, respectively. 
\end{definition}
\begin{theorem}[Fredholm Alternative]
  Let $T:X \to X$ be a compact linear operator on a normed space $X$ and let $\lambda \not = 0$. Then $T_{\lambda} = T - \lambda I$ satisfies the Fredholm alternative \cite{RS}. 
  \label{FredholmAlternative}
\end{theorem}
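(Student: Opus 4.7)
The plan is to follow the classical Riesz--Schauder route, proving three analytic facts about $T_\lambda$ and then transferring them to $T^\times$ by duality. After an innocuous rescaling that lets me assume $\lambda = 1$, the argument proceeds in three stages.

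First, I would show that $N := \ker(T_\lambda)$ is finite-dimensional. The observation is that $T$ restricted to $N$ is just multiplication by $\lambda$, so compactness of $T$ forces the closed unit ball of $N$ to be compact, and Riesz's lemma then gives $\dim N < \infty$. Second, I would show that $R := \operatorname{Range}(T_\lambda)$ is closed. The idea is: given $T_\lambda x_n \to y$, replace each $x_n$ by a nearest point modulo $N$ (possible because $N$ is finite-dimensional, hence proximinal), show the new sequence is bounded (otherwise normalizing and passing to a subsequence produces, via compactness, a unit vector in $N$ at unit distance from $N$, a contradiction), and then extract a convergent subsequence via compactness of $T$ to produce a preimage of $y$.

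The heart of the proof, and the main obstacle, is the dichotomy: $T_\lambda$ is injective if and only if it is surjective. Suppose it were injective but not surjective, and set $R_n := \operatorname{Range}(T_\lambda^n)$. Injectivity of $T_\lambda$ propagates strict inclusion, so $R_{n+1} \subsetneq R_n$; each $R_n$ is closed because $T_\lambda^n$ expands as $(-\lambda)^n I + (\mathrm{compact})$, reducing to the previous step. By Riesz's lemma, choose unit vectors $y_n \in R_n$ with $\operatorname{dist}(y_n, R_{n+1}) \geq 1/2$. For $n < m$, write $T y_n - T y_m = \lambda y_n + (T_\lambda y_n - T y_m)$; the parenthesized term lies in $R_{n+1}$, so $\|T y_n - T y_m\| \geq |\lambda|/2$, contradicting compactness of $T$. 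The reverse implication (surjective implies injective) is symmetric, run on the ascending chain $\ker(T_\lambda^n)$ with Riesz's lemma applied in successive quotients.

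Finally, I would transfer the result to $T^\times$. Schauder's theorem yields compactness of $T^\times$, so the same three stages apply to $T_\lambda^\times = T^\times - \lambda I$. The duality identities $\ker(T_\lambda) = {}^\perp\!\operatorname{Range}(T_\lambda^\times)$ and $\ker(T_\lambda^\times) = \operatorname{Range}(T_\lambda)^\perp$, valid precisely because the ranges are closed, tie the two sides together: triviality of $\ker(T_\lambda)$ forces surjectivity of $T_\lambda^\times$ and vice versa, yielding the equivalence stated in the definition. Uniqueness of the solutions is immediate from injectivity. I expect the dichotomy step to be the trickiest part, principally in the inductive verification that each $R_n$ (respectively $\ker(T_\lambda^n)$) is closed via a Neumann-type expansion of $T_\lambda^n$ as scalar plus compact.
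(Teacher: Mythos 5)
Your outline is correct. The paper does not actually prove Theorem \ref{FredholmAlternative}---it quotes the result from the cited reference---and what you have written is precisely the classical Riesz--Schauder argument found in the standard sources: finite-dimensionality of $\ker(T_\lambda)$ via Riesz's lemma, closedness of the range via nearest points modulo the kernel, the injective-iff-surjective dichotomy via the strictly nested chains $\operatorname{Range}(T_\lambda^{\,n})$ and $\ker(T_\lambda^{\,n})$ (using that $T_\lambda^{\,n}$ is again a nonzero scalar multiple of the identity plus a compact operator), and transfer to $T^{\times}$ by Schauder's theorem together with the annihilator identities. The only nuance worth flagging is that $\ker(T_\lambda) = {}^{\perp}\operatorname{Range}(T_\lambda^{\times})$ and $\ker(T_\lambda^{\times}) = \operatorname{Range}(T_\lambda)^{\perp}$ hold unconditionally by Hahn--Banach; closedness of the ranges is needed only for the companion identities expressing the ranges as annihilators of the kernels, which is exactly how you use it, so the outline stands as is.
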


Given this result, we first translate the optimization problem into one involving bounded linear operators. Then we show that those operators are compact. Finally, we show that the operators are strictly positive definite. 

\begin{proposition}
  The minimization of (\ref{Minimize}) is equivalent to the minimization of 
  \begin{equation}
    R(g) = \frac{\innerproduct{(I+K_{\mathcal{G},\sigma})g,g}}{\left \vert \innerproduct{g,1} \right \vert^{2}} 
    \label{OperatorForm}
  \end{equation}
  over all $g$ in $L^{2}[-\sigma,\sigma]$ (such that the corresponding $\phi$ is admissible), where $K_{\mathcal{G},\sigma}: L^{2}[-\sigma,\sigma] \rightarrow L^{2}[-\sigma,\sigma]$ is defined by 
  \begin{equation}
    K_{\mathcal{G},\sigma}(g(x)) = \int_{-\sigma}^{\sigma}m_{\mathcal{G}}(x-y)g(y)dy 
    \label{OperatorKDefinition}
  \end{equation}
  and $m_{\mathcal{G}}$ is one of the densities given in \eqref{mfunctions}.
\end{proposition}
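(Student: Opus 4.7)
The plan is to use Theorem \ref{FormOfPhiHat} to parametrize the admissible test functions $\phi$ by functions $g \in L^2[-\sigma,\sigma]$ via $\hat\phi = g \ast \check g$, and then rewrite both the numerator and the denominator of (\ref{Minimize}) as inner products on $L^2[-\sigma,\sigma]$. The correspondence is essentially bijective: Theorem \ref{FormOfPhiHat} produces a suitable $g$ from any admissible $\phi$, while conversely, given any $g \in L^2[-\sigma,\sigma]$ with $\langle g, 1\rangle \neq 0$, the Paley--Wiener theorem guarantees that $h := \mathcal{F}^{-1}(g)$ is entire of exponential type $\sigma$, so $\phi := |h|^2$ is admissible by the argument of Corollary \ref{splitcor}. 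The excluded case $\langle g,1\rangle = 0$ corresponds to $\phi(0) = 0$, which is disallowed and so does not affect the infimum.

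For the denominator, inverting the Fourier transform at $0$ and using $\int (g\ast\check g)\,d\xi = \bigl(\int g\bigr)\bigl(\int \check g\bigr)$ together with $\int \check g = \overline{\int g}$ yields
\begin{equation*}
  \phi(0) \;=\; \int_{-\infty}^{\infty} \hat\phi(\xi)\,d\xi \;=\; \Bigl|\int_{-\sigma}^{\sigma} g(\xi)\,d\xi\Bigr|^{2} \;=\; |\langle g, 1\rangle|^{2}.
\end{equation*}
For the numerator, I would invoke the Plancherel identity (\ref{Plancherel}) together with the decomposition $\widehat{W} = \delta_0 + m_{\mathcal{G}}$ from (\ref{eq:1}) to split
\begin{equation*}
  \int_{-\infty}^{\infty} \phi(x) W(x)\,dx \;=\; \hat\phi(0) \;+\; \int_{-\infty}^{\infty} \hat\phi(\xi)\, m_{\mathcal{G}}(\xi)\,d\xi.
\end{equation*}
The first piece is $\hat\phi(0) = (g \ast \check g)(0) = \int |g(t)|^{2}\,dt = \langle g, g\rangle$. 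For the second, expanding the convolution, using Fubini (justified since $g \in L^{2}$ is compactly supported and $m_{\mathcal{G}}$ is bounded), and substituting $u = t - \xi$ gives
\begin{equation*}
  \int_{-\sigma}^{\sigma}\!\int_{-\sigma}^{\sigma} m_{\mathcal{G}}(t - u)\, g(u)\, \overline{g(t)}\, du\, dt \;=\; \langle K_{\mathcal{G},\sigma}\, g, g\rangle,
\end{equation*}
where restricting both variables to $[-\sigma,\sigma]$ is permissible because $\mathrm{supp}(g) \subseteq [-\sigma,\sigma]$, and where the evenness and reality of $m_{\mathcal{G}}$ make the kernel self-adjoint.

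Adding the two contributions gives $\int \phi W\,dx = \langle (I + K_{\mathcal{G},\sigma})g,\,g\rangle$, and dividing by $\phi(0) = |\langle g,1\rangle|^{2}$ produces $R(g)$ exactly. Because the $\phi \leftrightarrow g$ correspondence is surjective onto the admissible class, the two infima coincide. The manipulations of the Fourier transform are routine; the step that genuinely does work is the bijective parametrization, which is already supplied by Theorem \ref{FormOfPhiHat} together with Corollary \ref{splitcor}. The only minor subtlety is the careful handling of the distributional pairing against $\delta_0$ in $\widehat{W}$, which is legitimate because $\hat\phi$ is continuous (indeed, the restriction of a compactly supported convolution of $L^{2}$ functions).
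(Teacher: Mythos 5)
Your proposal is correct and follows essentially the same route as the paper: apply Plancherel, use $\hat{\phi} = g \ast \check{g}$ with $\widehat{W} = \delta_0 + m_{\mathcal{G}}$ to get the numerator $\innerproduct{(I+K)g,g}$, and identify $\phi(0) = \left\vert \innerproduct{g,1} \right\vert^{2}$. Your additional remarks on the surjectivity of the $\phi \leftrightarrow g$ correspondence and the continuity of $\hat{\phi}$ against the $\delta_0$ term only make explicit what the paper leaves implicit via Theorem \ref{FormOfPhiHat} and Corollary \ref{splitcor}.
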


\begin{quote}
  \texttt{Throughout the text, the operator $K_{\mathcal{G},\sigma}$ will always depend on $\mathcal{G}$ and $\sigma$. We make note of this now and omit these subscripts in future instances, referring to it simply as $K$.}
\end{quote}

\begin{proof}
  Note that we have already shown $\mathcal{F}(\check{g}) = \overline{\mathcal{F}(g)}$. The same argument shows that $\mathcal{F}^{-1}(\check{g}) = \overline{\mathcal{F}^{-1}(g)}$. Applying the Plancherel Theorem, we have 
  \begin{align*}
	  \frac{\int_{-\infty}^{\infty} \phi(x)W(x)}{\phi(0)} \ &= \ \frac{\int_{-2\sigma}^{2\sigma} \hat{\phi}(\xi)\hat{W}(\xi) d \xi}{\mathcal{F}^{-1}(g \ast \check{g})(0)} \\
    \ &= \ \frac{\int_{-2\sigma}^{2\sigma} \hat{W}(\xi)((g \ast \check{g})(\xi))}{(\mathcal{F}^{-1}(g) \cdot \mathcal{F}^{-1}(\check{g}))(0)} \\
    \ &= \ \frac{\int_{-2\sigma}^{2\sigma}\hat{W}(\xi) \int_{-\sigma}^{\sigma} \overline{g(y - \xi)}g(y) dy \ d\xi}{\left \vert \innerproduct{g,1} \right \vert^{2}} \\
    \ &= \ \frac{\int_{-2\sigma}^{2\sigma} \hat{W}(\xi) \int_{-\sigma}^{\sigma} g(y - \xi)g(y) dy \ d\xi}{\left \vert \innerproduct{g,1} \right \vert^{2}} \tag{$g$ is real valued}\\
    \ &= \ \frac{ \int_{-\sigma}^{\sigma} (\hat{W} \ast g)(\xi) g(\xi) d\xi}{\left \vert \innerproduct{g,1} \right \vert^{2}} \\
    \ &= \frac{\innerproduct{(I+K)g,g}}{\left \vert \innerproduct{g,1} \right \vert^{2}}, 
  \end{align*}
  where the final equality holds by \eqref{OperatorKDefinition} and \eqref{eq:1}
\end{proof}

The following shows that $K$ is compact: 

\begin{theorem}
  The integral operator
  \begin{equation}
    (Tf)(x) = \int_{M}^{}R(x,y)f(y) d\mu(y)
    \label{CompactEqn}
  \end{equation}
  on $L^2(M,d\mu)$ is compact if $R(\cdot,\cdot) \in L^{2}(M \times M,d\mu \otimes d\mu)$.
  \label{IfThisThenCompact}
\end{theorem}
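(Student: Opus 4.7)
The plan is to prove compactness by approximating $T$ in operator norm by a sequence of finite-rank operators, using the Hilbert–Schmidt structure of the kernel. The starting point is the fundamental bound
\[
\|Tf\|_{L^2(M)}^2 \;=\; \int_M \Bigl|\int_M R(x,y) f(y)\, d\mu(y)\Bigr|^2 d\mu(x) \;\le\; \|R\|_{L^2(M\times M)}^2 \, \|f\|_{L^2(M)}^2,
\]
obtained by a single application of the Cauchy–Schwarz inequality inside the inner integral. This shows that $T$ is bounded with $\|T\|_{\mathrm{op}} \le \|R\|_{L^2(M\times M)}$, and the same inequality, applied to a difference of kernels, will be the workhorse of the argument.

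Next I would invoke separability of $L^2(M,d\mu)$ to pick an orthonormal basis $\{e_n\}_{n\ge 1}$; then the products $\{e_n(x)\overline{e_m(y)}\}_{n,m\ge 1}$ form an orthonormal basis of $L^2(M\times M,d\mu\otimes d\mu)$ by the usual Fubini/Stone–Weierstrass type argument. Expanding the kernel as $R(x,y) = \sum_{n,m} c_{nm}\, e_n(x)\overline{e_m(y)}$ with $\sum_{n,m} |c_{nm}|^2 = \|R\|_{L^2(M\times M)}^2 < \infty$, I would define the truncated kernel
\[
R_N(x,y) \;=\; \sum_{n,m \le N} c_{nm}\, e_n(x)\overline{e_m(y)}
\]
and the corresponding operator $T_N$. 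Each $T_N$ has range contained in $\mathrm{span}\{e_1,\dots,e_N\}$, hence is finite-rank and therefore compact.

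To finish, I would apply the boundedness estimate to the difference $T - T_N$, whose kernel is $R - R_N$, obtaining
\[
\|T - T_N\|_{\mathrm{op}} \;\le\; \|R - R_N\|_{L^2(M\times M)} \;=\; \Bigl(\sum_{\max(n,m) > N} |c_{nm}|^2\Bigr)^{1/2} \;\longrightarrow\; 0
\]
as $N \to \infty$, since the Fourier coefficients of $R$ are square-summable. The conclusion then follows from the standard fact that the set of compact operators on a Hilbert space is closed in the operator-norm topology, so $T$, being a norm limit of finite-rank operators, is compact.

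The only genuinely delicate step is the operator-norm estimate $\|T-T_N\|_{\mathrm{op}} \le \|R-R_N\|_{L^2}$; everything else is bookkeeping. If $L^2(M,d\mu)$ happens not to be separable, I would restrict attention to the separable closed subspace generated by the sections $R(x,\cdot)$ for $x$ ranging over a countable dense-in-measure subset, on whose orthogonal complement $T$ vanishes, and apply the argument there.
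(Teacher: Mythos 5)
Your proof is correct: it is the standard Hilbert--Schmidt argument (operator-norm bound by the $L^2$ norm of the kernel, expansion of the kernel in a product orthonormal basis, truncation to finite-rank operators, and norm-closedness of the compact operators), which is exactly the proof of the result the paper simply cites from \cite{RS}, Section 6.6, without reproducing it. The only loose thread is the separability remark at the end, which can be avoided entirely by approximating $R$ in $L^{2}(M\times M)$ directly by finite sums of products $\sum_{i} f_i(x)\overline{g_i(y)}$, dense there for $\sigma$-finite $\mu$; otherwise nothing needs changing.
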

\begin{proof}
  See \cite{RS}, Section 6.6.
\end{proof}

\begin{corollary}
  The operator $I+K$ is a positive definite, i.e., the homogenous equation from Definition \ref{DefFredAlt} has only the trivial solution.  
  \label{HomogEqnOnlyTrivSoln}
\end{corollary}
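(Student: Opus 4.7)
The claim is the strict positivity of $I+K$ as a quadratic form, equivalently the injectivity of $I+K$ on $L^{2}[-\sigma,\sigma]$. My plan is to run the Plancherel calculation from the preceding proposition \emph{without} dividing by $|\langle g,1\rangle|^{2}$, producing the identity
\begin{equation*}
\langle (I+K)g,g\rangle \;=\; \int_{-\infty}^{\infty}\phi(x)\,W(x)\,dx, \qquad \phi = |\mathcal{F}^{-1}(g)|^{2},
\end{equation*}
valid for every $g\in L^{2}[-\sigma,\sigma]$ (the derivation in the proposition is valid line-by-line after clearing denominators, so the case $\langle g,1\rangle=0$ causes no issue). Once this identity is in hand, the right-hand side is manifestly $\geq 0$ because $\phi=|h|^{2}\geq 0$ and, inspecting \eqref{GroupWeightFunctions}, each of the four weight functions $W$ is a non-negative distribution (for $Sp$ and $SO(\mathrm{Even})$ this follows from $|\mathrm{sinc}(2\pi x)|\leq 1$; for $O$ and $SO(\mathrm{Odd})$ the non-$\delta$ part is non-negative and the $\delta_0$-mass has positive coefficient). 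So $I+K$ is at least positive semi-definite.

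\textbf{Upgrading to strict positivity.} Suppose $g\not\equiv 0$ in $L^{2}[-\sigma,\sigma]$. By Corollary \ref{splitcor} together with Theorem \ref{PaleyWiener}, $h=\mathcal{F}^{-1}(g)$ extends to an entire function of exponential type $\sigma$ which is not identically zero, hence its zero set is discrete, so $\phi(x)=|h(x)|^{2}>0$ on an open dense subset of $\R$. I would then argue case by case that $\int \phi W>0$: for $W(Sp)$ and $W(SO(\mathrm{Even}))$, the continuous density is strictly positive off a discrete set, so the product $\phi\cdot W$ is positive on a set of positive Lebesgue measure; for $W(O)$ and $W(SO(\mathrm{Odd}))$, the density part is $\geq 0$ and there is in addition a positive $\delta_0$-contribution, so the integral is positive either from the continuous part or from the atom. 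In every case $\langle (I+K)g,g\rangle>0$ whenever $g\not\equiv 0$.

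\textbf{Conclusion.} If $(I+K)g=0$ then $\langle (I+K)g,g\rangle=0$; combined with the strict positivity just established, this forces $g\equiv 0$, which is the homogeneous statement of Definition \ref{DefFredAlt}.

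\textbf{Main obstacle.} The only delicate point is the handling of the $\delta_0$ contribution in $W(O)$ and $W(SO(\mathrm{Odd}))$: one must treat the identity $\langle (I+K)g,g\rangle=\int\phi W$ in the distributional sense, interpreting the $\delta_0$-term on the right as $\tfrac12|h(0)|^{2}$ (which is precisely where $\phi(0)=|\langle g,1\rangle|^{2}$ enters). Once this bookkeeping is done the calculation is essentially a direct transcription of the proposition's proof, and all four Katz--Sarnak weight functions are simultaneously handled by the same argument.
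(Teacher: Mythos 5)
Your proposal is correct, and it is worth noting that the paper itself states this corollary with no proof at all (it is used later, in Lemma \ref{OptimalCriterionAndComputationLemma}, but never justified); your argument supplies exactly the missing content, and it is the argument implicit in the paper's setup. Concretely, you reverse the Plancherel computation of the preceding proposition to get $\langle (I+K)g,g\rangle=\int\phi W$ with $\phi=|\mathcal{F}^{-1}(g)|^{2}$, use the nonnegativity of the four weights in \eqref{GroupWeightFunctions} (noting $|\sin t/t|\le 1$) for semi-definiteness, and then obtain strictness from Theorem \ref{PaleyWiener}: a nonzero $g\in L^{2}[-\sigma,\sigma]$ gives a nonzero entire $h$ of exponential type, whose real zeros are isolated, so $\phi>0$ off a discrete set while the continuous part of each density is strictly positive off a discrete set; hence $\langle (I+K)g,g\rangle>0$, which is precisely the injectivity statement of Definition \ref{DefFredAlt}. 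Two small points of bookkeeping, neither a gap: (i) for the Fredholm alternative one works in complex $L^{2}$, so the step in the proposition's computation that invokes ``$g$ is real valued'' should simply be skipped and the conjugate retained; since $m$ is real and even the computation still lands on $\langle Kg,g\rangle$, so your ``line-by-line after clearing denominators'' claim survives with that one modification; (ii) your handling of the $\delta_{0}$ atoms is fine, but note the atom may contribute $0$ (if $h(0)=0$), so the strict positivity should be attributed to the continuous part, which is always positive for $g\neq 0$ --- your wording ``either from the continuous part or from the atom'' is slightly weaker than what your own argument already proves.
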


\newpage

\section{An Optimality Criterion and an Equality for the Infimum}\label{OptCritSect}
In this section, we introduce a necessary and sufficient condition for the optimal $g$ (for each group $\mathcal{G}$ and each $\sigma$) and relate it to an equality for the infimum solely in terms of that $g_{\mathcal{G}}$. 

\begin{quote}
  \texttt{Throughout the text, the optimal $g_{\mathcal{G},\sigma}$ will always depend on $\mathcal{G}$ and $\sigma$. We make note of this now and omit these subscripts in future instances, referring to it simply as $g$.}  
\end{quote}

\begin{lemma}
	 The optimal $g$ satisfies 
  \begin{equation}
    \innerproduct{1,g} \not = 0,
    \label{NotIntegrateToZero}
  \end{equation}
  where $\innerproduct{\cdot,\cdot}$ denotes the standard $L^{2}(-\sigma,\sigma)$ inner product. 
  \label{NonzeroIntegralLemma}
\end{lemma}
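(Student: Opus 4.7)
The plan is to observe that this lemma is essentially an unpacking of the admissibility condition $\phi(0) > 0$, together with the identity $\phi(0) = |\innerproduct{g,1}|^{2}$.

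First I would translate $\phi(0)$ into a condition on $g$. By Corollary \ref{splitcor}, $\phi = |h|^{2}$ with $h = \mathcal{F}^{-1}(g)$, and since $g$ is supported in $[-\sigma,\sigma]$,
\[\phi(0) \;=\; |h(0)|^{2} \;=\; \left|\int_{-\sigma}^{\sigma} g(\xi)\,d\xi\right|^{2} \;=\; |\innerproduct{g,1}|^{2}.\]
This identity is in fact already implicit in the chain of equalities proving the previous proposition, where the denominator $(\mathcal{F}^{-1}(g)\cdot \mathcal{F}^{-1}(\check g))(0)$ was simplified to $|\innerproduct{g,1}|^{2}$; so no new computation is required beyond invoking that line.

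Second, I would invoke admissibility: by definition an admissible test function $\phi$ must satisfy $\phi(0) > 0$. Combined with the identity above, this forces $\innerproduct{g,1} \neq 0$ for the $g$ corresponding to any admissible $\phi$, and in particular for the optimal one. Equivalently, if $\innerproduct{g,1} = 0$ then the denominator of $R(g)$ in \eqref{OperatorForm} vanishes, so such a $g$ cannot possibly be a minimizer of $R$, and the corresponding $\phi$ is not admissible in the first place.

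There is no genuine obstacle here; the lemma is a short bookkeeping observation, stated separately so that later arguments (the optimality criterion, the Fredholm-type integral equation \eqref{eq:0}, and the piecewise-linear reduction of Section \ref{findimreductionsection}) can freely divide by $\innerproduct{g,1}$ and, if convenient, normalize the optimal $g$ so that $\innerproduct{g,1} = 1$ without loss of generality.
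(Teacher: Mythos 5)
Your proposal is correct and follows the same route as the paper: both unpack admissibility ($\phi(0) > 0$) via Corollary \ref{splitcor}, writing $\phi = |h|^2$ with $h = \mathcal{F}^{-1}(g)$ and noting $h(0) = \innerproduct{1,g}$, which forces $\innerproduct{1,g} \neq 0$.
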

  
  \begin{proof}
	  In order for $\phi$ to be admissible, we require $\phi(0) > 0$. Referring to the results of Section \ref{GallagherSection}, $\phi(x) = (h(x))^2$, for $x \in \R$, where $h = \mathcal{F}^{-1}(g)$ for $x \in \R$. Hence, $h(0) \not = 0$, but $h(0) = \innerproduct{1,g}$.  
\end{proof}

\begin{lemma}
  For each group $\mathcal{G}$ and all $\sigma > 0$, the optimal $g$ satisfies 
  \begin{equation}
    (I + K)(g) \ = \ 1,
    \label{CriterionForOptimal}
  \end{equation}
  where $1$ is the function that is identically 1 on $[-\sigma,\sigma]$. In fact, we have 
  \begin{equation}
    \inf_{f \in L^{2}[-\sigma,\sigma]} R(f) \ = \ \frac{1}{\innerproduct{1,g}},
    \label{InfComputationByg}
  \end{equation}
  where $R(f)$ is the functional in \eqref{OperatorForm} we seek to minimize and $g$ is the unique function in $L^{2}[-\sigma,\sigma]$ that satisfies \eqref{CriterionForOptimal}. 
  \label{OptimalCriterionAndComputationLemma}
\end{lemma}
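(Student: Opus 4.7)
The plan is to reinterpret the minimization as an application of Cauchy--Schwarz with respect to the inner product induced by the positive definite self-adjoint operator $I+K$, with $g$ playing the role of the ``Riesz representative'' of the linear functional $f \mapsto \langle f, 1\rangle$.

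First I would use the results of the previous section to produce the candidate $g$. Since $K$ is compact (Theorem \ref{IfThisThenCompact}) and $I+K$ is positive definite (Corollary \ref{HomogEqnOnlyTrivSoln}), the Fredholm alternative (Theorem \ref{FredholmAlternative}) applied with $\lambda = -1$ and $T = -K$ gives that $(I+K)g = 1$ has a unique solution $g \in L^2[-\sigma,\sigma]$. Because $m_{\mathcal{G}}$ is real and even and $1$ is real, uniqueness forces $g$ to be real; by Lemma \ref{evenlemma}, $g$ is also even. Moreover,
\begin{equation*}
  \langle 1, g\rangle \ = \ \langle (I+K)g, g\rangle \ > \ 0,
\end{equation*}
where strict positivity follows from positive definiteness together with $g \neq 0$ (since $(I+K)g = 1 \neq 0$). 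This simultaneously ensures the right-hand side of \eqref{InfComputationByg} is well-defined and that $g$ itself satisfies the admissibility condition \eqref{NotIntegrateToZero}.

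Next I would establish the lower bound $R(f) \ge 1/\langle 1, g\rangle$. Since $A := I+K$ is bounded, self-adjoint, and positive definite, its positive square root $A^{1/2}$ exists on $L^2[-\sigma,\sigma]$. Using $Ag = 1$ and self-adjointness,
\begin{equation*}
  \langle f, 1\rangle \ = \ \langle f, Ag\rangle \ = \ \langle Af, g\rangle \ = \ \langle A^{1/2}f, A^{1/2}g\rangle.
\end{equation*}
Applying Cauchy--Schwarz to the right-hand side yields
\begin{equation*}
  |\langle f, 1\rangle|^2 \ \le \ \langle A^{1/2}f, A^{1/2}f\rangle \cdot \langle A^{1/2}g, A^{1/2}g\rangle \ = \ \langle Af, f\rangle \cdot \langle 1, g\rangle,
\end{equation*}
which rearranges to $R(f) \ge 1/\langle 1, g\rangle$. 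Equality holds precisely when $A^{1/2}f$ and $A^{1/2}g$ are proportional, equivalently when $f$ and $g$ are proportional. Substituting $f = g$ directly,
\begin{equation*}
  R(g) \ = \ \frac{\langle Ag, g\rangle}{|\langle g, 1\rangle|^2} \ = \ \frac{\langle 1, g\rangle}{|\langle 1, g\rangle|^2} \ = \ \frac{1}{\langle 1, g\rangle},
\end{equation*}
so the bound is attained. Since $R$ is scale-invariant, proportional admissible $f$'s yield the same value of $R$, so $g$ is the (essentially unique) minimizer.

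Finally, to close the loop and verify \eqref{CriterionForOptimal} characterizes \emph{any} optimal $g$, I would compute the Gâteaux derivative: for any $h \in L^2[-\sigma,\sigma]$ and an optimal $f^*$, the condition $\frac{d}{d\epsilon}\big|_{\epsilon=0} R(f^* + \epsilon h) = 0$ simplifies (using $A = A^*$) to
\begin{equation*}
  \langle \, \langle f^*, 1\rangle \, Af^* \ - \ \langle Af^*, f^*\rangle \cdot 1, \ h\rangle \ = \ 0 \qquad \text{for all } h,
\end{equation*}
so $Af^*$ is a scalar multiple of $1$. Normalizing that scalar to one (permissible because of scale-invariance and Lemma \ref{NonzeroIntegralLemma}) recovers $(I+K)f^* = 1$, and uniqueness from the Fredholm alternative identifies $f^* = g$. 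The main delicate point is the step showing $\langle 1, g\rangle > 0$ and the corresponding equality case in Cauchy--Schwarz; everything else is a direct application of the prior structural results.
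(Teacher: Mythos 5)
Your proposal is correct, but it reaches the conclusion by a different device than the paper. The paper, after obtaining $g$ from the Fredholm alternative and noting $A := \innerproduct{1,g} = \innerproduct{(I+K)g,g} > 0$ exactly as you do, proceeds by a direct orthogonal decomposition: it rescales a competitor $t$ so that $\innerproduct{1,t} = A$, writes $t = g + f$ with $\innerproduct{1,f} = 0$, and expands $R(t)$, observing that the cross term $2\innerproduct{(I+K)g,f}/A^2 = 2\innerproduct{1,f}/A^2$ vanishes and the remaining term $\innerproduct{f,(I+K)f}/A^2$ is nonnegative, with equality iff $f \equiv 0$. You instead run the Cauchy--Schwarz inequality in the energy inner product $\innerproduct{(I+K)\cdot,\cdot}$ via the positive square root $(I+K)^{1/2}$, writing $\innerproduct{f,1} = \innerproduct{(I+K)^{1/2}f,(I+K)^{1/2}g}$; this gives the same lower bound $R(f) \ge 1/A$ for all competitors at once, without any normalization or decomposition, and the equality case (injectivity of $(I+K)^{1/2}$) identifies minimizers as multiples of $g$. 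Your additional G\^ateaux-derivative computation is logically redundant given the equality case, but it supplies an independent Euler--Lagrange characterization that the paper does not state; what the paper's route buys in exchange is that it never needs the square-root operator or the equality analysis in Cauchy--Schwarz, only self-adjointness and positive definiteness fed into a completed square. One cosmetic correction: to produce $I+K$ from Theorem \ref{FredholmAlternative} you should take $T = K$ with $\lambda = -1$ (or $T=-K$ with $\lambda = 1$); your choice $T = -K$, $\lambda = -1$ yields $I - K$.
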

\begin{proof} \cite{ILS}
 The Fredholm alternative tells us that such a $g$ exists and is unique. We show here that such a $g$ is optimal. \\ 

  Let $g$ satisfy \eqref{CriterionForOptimal}, and set $\innerproduct{1,g} = A \not = 0$. In fact, $A > 0$, since 
 	\[ 
		\frac{1}{A} \ = \ \frac{\innerproduct{(I + K)g,g}}{A^2} 
	\]
	and $I + K$ is a positive-definite operator. \\

  Let $t$ be another function in $L^{2}(-\sigma,\sigma)$ corresponding to an optimal $\phi$ (so that among other requirements, it satisfies $\innerproduct{1,t} \not = 0$). The functional $R$ is invariant under scaling, so we assume that $\innerproduct{1,t} = A$. Say $t = g + f$, so that $\innerproduct{1,f} = 0$. Then, we have 
  \begin{align}
    R(t) \ &= \ \frac{\innerproduct{(I + K)(g + f), g + f}}{A^{2}} \notag \\
    \ &= \ \frac{1}{A} + \frac{\innerproduct{f,(I + K)(f)}}{A^{2}} + 2\frac{\innerproduct{(I + K)(g),f}}{A^{2}} \notag \\
    \ &= \ \frac{1}{A} + \frac{\innerproduct{f, (I + K)(f)}}{A^{2}} + 2\frac{\innerproduct{1,f}}{A^{2}} \\
    \ &= \ \frac{1}{A} + \frac{\innerproduct{f,(I + K)(f)}}{A^2} \\ 
    \ &\ge \ \frac{1}{A}, 
    \label{InfEquation}
  \end{align}
  where the second equality holds because $I + K$ is self-adjoint and the inequality holds by positive-definiteness. Note that equality holds precisely when $f$ is identically zero.  
\end{proof}

\newpage

\section{Optimal Test Functions for the Orthogonal Group}\label{orthogsec}
\begin{proposition}
  Let $\sigma > 0$. Then the optimal test function for the weight
  function corresponding to the orthogonal group is 
\[\phi(x) = \left(\frac{\sin(2\pi \sigma x)}{(1+\sigma)\pi x}\right)^2. \] 
\end{proposition}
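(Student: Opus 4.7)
The plan is to apply the optimality criterion of Lemma \ref{OptimalCriterionAndComputationLemma} to the orthogonal case, where the density $m(O)(\xi) = \tfrac{1}{2}$ from \eqref{mfunctions} is constant on all of $\R$. This constancy of $m$ trivializes the integral equation and makes an explicit computation possible without any of the Fredholm machinery used in later sections.

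First, I would set up the equation $(I+K)(g) = 1$ for the orthogonal group. Since $m_O(x-y) = \tfrac{1}{2}$ independent of $x$ and $y$, we have
\begin{equation*}
  K(g)(x) \ = \ \int_{-\sigma}^{\sigma} \tfrac{1}{2}\, g(y)\, dy \ = \ \tfrac{1}{2}\innerproduct{1,g},
\end{equation*}
which is a constant function of $x$. Thus $(I+K)(g) = 1$ forces $g$ itself to be constant on $[-\sigma,\sigma]$: write $g(x) = c$ and solve $c + \tfrac{1}{2}\cdot 2\sigma c = 1$, obtaining $c = \tfrac{1}{1+\sigma}$. Uniqueness is guaranteed by the Fredholm alternative together with the positive-definiteness of $I+K$ (Corollary \ref{HomogEqnOnlyTrivSoln}).

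Next I would recover $\phi$ from $g$. By Corollary \ref{splitcor} and the construction in Theorem \ref{FormOfPhiHat}, $\phi(x) = |h(x)|^2$ where $h = \mathcal{F}^{-1}(g)$. With $g = \tfrac{1}{1+\sigma}\mathrm{I}_{[-\sigma,\sigma]}$, a direct computation gives
\begin{equation*}
  h(x) \ = \ \frac{1}{1+\sigma}\int_{-\sigma}^{\sigma} e^{2\pi i \xi x}\, d\xi \ = \ \frac{\sin(2\pi\sigma x)}{(1+\sigma)\pi x},
\end{equation*}
which is real, so $\phi(x) = \bigl(\sin(2\pi\sigma x)/((1+\sigma)\pi x)\bigr)^2$, as claimed.

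There is essentially no obstacle here; the only thing worth flagging is verifying admissibility of the resulting $\phi$, namely that $\phi \geq 0$ (immediate, being a square), $\phi \in L^1(\R)$ (standard for the squared sinc kernel), $\phi(0) = \bigl(2\sigma/(1+\sigma)\bigr)^2 > 0$, and $\mathrm{support}(\widehat{\phi}) = \mathrm{support}(g \ast \check{g}) \subset [-2\sigma, 2\sigma]$. With these checks the optimality criterion applies and the proposition follows.
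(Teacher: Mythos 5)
Your proposal is correct and follows essentially the same route as the paper: solve the optimality criterion $(I+K)(g)=1$, note that the constant kernel $m_{O}=\tfrac12$ yields the constant solution $g \equiv \tfrac{1}{1+\sigma}$ on $[-\sigma,\sigma]$, take the inverse Fourier transform, and square via Corollary \ref{splitcor}. The only differences are cosmetic: you derive that $g$ must be constant (and check admissibility explicitly), while the paper simply verifies that the constant function solves the integral equation and leaves those routine checks implicit.
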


\begin{proof}
  Here we seek a solution to the integral equation 
  \begin{equation}
    f_0(x) + \frac{1}{2}\int_{-\sigma}^{\sigma}f_0(y)dy = 1_{[-\sigma,\sigma]}
    \label{AnotherVerisonOfIntEqn}
  \end{equation}
  As the function 
  \[ g(x) = 
    \begin{cases}
      \frac{1}{1 + \sigma} &\left \vert x \right \vert \le \sigma \\
      0 &\left \vert x \right \vert > \sigma
    \end{cases}
  \] 
  indeed solves the equation \eqref{AnotherVerisonOfIntEqn}, we may use it to compute the optimal $\phi$. We compute its Fourier inverse:  
\begin{align*}
  \mathcal{F}^{-1}(g) &=
  \frac{1}{1+\sigma}\int_{-\sigma}^{\sigma}e^{2\pi i x\xi}d\xi \\
  &=\frac{1}{1 + \sigma}\left(\frac{\sin(2\pi\sigma x)}{\pi x} \right).  
\end{align*}
We complete the proof by applying Corollary \ref{splitcor}. 
\end{proof}

\newpage 

\section{Lipschitz Continuity and Smoothness Almost Everywhere for $g$} \label{smoothnesssection}
First, we show that for an optimal $\phi$ such that $\hat{\phi} = g \ast \check{g}$, $g$ must be Lipschitz continuous. Then we show that such a function is differentiable almost everywhere, using a theorem of Rademacher. \\

We begin by proving that $g$ is bounded. 
\begin{lemma}
  The optimal $g$ as defined in Proposition \ref{CriterionForOptimal}, is bounded. 
  \label{BoundedBeforeLip}
\end{lemma}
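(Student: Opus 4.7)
The plan is to read off the boundedness of $g$ directly from the integral equation $(I+K)g = 1$ derived in Lemma \ref{OptimalCriterionAndComputationLemma}. Rearranging gives the pointwise identity
\[
g(x) \;=\; 1 - \int_{-\sigma}^{\sigma} m_{\mathcal{G}}(x-y)\, g(y)\, dy
\]
for almost every $x \in [-\sigma,\sigma]$. The right-hand side is a formula that makes sense for every $x$, so one can use it as a representative of the $L^2$ equivalence class of $g$ and try to bound it uniformly in $x$.

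The first step is to notice that for each of the four groups, the density $m_{\mathcal{G}}$ in \eqref{mfunctions} is a bounded function: in every case $\|m_{\mathcal{G}}\|_{\infty} \le 1$, because each $m_{\mathcal{G}}$ is a constant multiple of $1$ or $I_{[-1,1]}$, possibly shifted by the constant $1$. The second step is to estimate the integral using Cauchy--Schwarz, since $[-\sigma,\sigma]$ has finite measure and $g \in L^2[-\sigma,\sigma]$:
\[
\left|\int_{-\sigma}^{\sigma} m_{\mathcal{G}}(x-y)\, g(y)\, dy\right| \;\le\; \|m_{\mathcal{G}}\|_{\infty}\,\sqrt{2\sigma}\,\|g\|_{2}.
\]
Combining these yields a uniform bound
\[
|g(x)| \;\le\; 1 + \sqrt{2\sigma}\,\|g\|_{2}
\]
for every $x \in [-\sigma,\sigma]$, which is the claim. (The finiteness of $\|g\|_2$ is automatic from $g \in L^2[-\sigma,\sigma]$, which is the space on which $K$ and hence Lemma \ref{OptimalCriterionAndComputationLemma} live.)

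There is no real obstacle here: the entire argument is a one-line bootstrap from $L^2$ to $L^\infty$, exploiting that the kernel $m_{\mathcal{G}}$ is bounded and the domain of integration has finite measure. The only minor subtlety is that $g$ is a priori only an element of $L^2$, so strictly speaking one should say that the formula $1 - Kg$ furnishes a bounded representative of the equivalence class of $g$; since the subsequent section on Lipschitz continuity will upgrade this representative further, it is natural to fix this representative now.
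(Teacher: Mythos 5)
Your proposal is correct and follows essentially the same route as the paper: rearrange the defining equation $(I+K)g=1$, bound the integral term via Cauchy--Schwarz using $\|m_{\mathcal{G}}\|_\infty \le 1$ and the finite measure of $[-\sigma,\sigma]$, and conclude with the triangle inequality. The remark about fixing a bounded representative of the $L^2$ class is a nice touch but does not change the substance.
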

\begin{proof}
  We will show that 
  \begin{equation}
  h(x):=\int_{-\sigma}^{\sigma} m(x - y)g(y) \ dy
    \label{FirstPartTriangle}
  \end{equation}
  is bounded. To show boundedness of $g$, we apply the triangle inequality to 
  \begin{equation}
    g(x) + h(x) = 1,
    \label{SimpleOptgCriterion}
  \end{equation}
  the defining equation for $g$.  \\
  
  We know that $g \in L^2(-\sigma,\sigma)$. By the Cauchy-Schwarz inequality, we have 
  \begin{equation}
    \int_{-\sigma}^{\sigma} m(x - y) g(y) \ dy \le \norm{g}_{L^2} \norm{m(x - y)}_{L^2}
    \label{CauchySchwarzBounded}
  \end{equation}
  We know $\norm{g}_{L^2} < \infty$. Let $m$ be one of the functions in \eqref{mfunctions} and let $x \in [-\sigma,\sigma]$. Then
  \begin{align*}
    \norm{m(x-y)}_{L^2} &= \left( \int_{-\sigma}^{\sigma} m(x - y)^2dy \right)^{1/2} \\
    &\le \left( (1)^2 (2\sigma) \right)^{1/2}, \\
  \end{align*}
  which is a bound independent of $x$. 

  Applying the triangle inequality to  \eqref{SimpleOptgCriterion} shows $g$ is bounded as well.
  \end{proof}

\begin{lemma}
  \label{LipschitzLemma}
The optimal $g$, as defined above is Lipschitz continuous.  
\end{lemma}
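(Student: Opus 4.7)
The plan is to use the defining integral equation $g(x) + \int_{-\sigma}^{\sigma} m(x-y)g(y)\,dy = 1$ on $[-\sigma,\sigma]$ to reduce Lipschitz continuity of $g$ to Lipschitz continuity of the convolution term $h(x) = \int_{-\sigma}^{\sigma} m(x-y)g(y)\,dy$. Since the previous lemma established that $g$ is bounded, say $\|g\|_\infty \le M$, and since every $m$ in \eqref{mfunctions} has the form $\alpha\,\mathrm{I}_{[-1,1]}(\cdot) + \beta$ for constants $\alpha,\beta$, I can split
\[
h(x) \;=\; \beta \int_{-\sigma}^{\sigma} g(y)\,dy \;+\; \alpha \int_{-\sigma}^{\sigma} \mathrm{I}_{[-1,1]}(x-y)\,g(y)\,dy .
\]
The first term is independent of $x$, so it contributes nothing to a Lipschitz estimate; all the work is in the second term.

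Next I would rewrite the indicator integral as $F(x) := \int_{[x-1,x+1]\cap[-\sigma,\sigma]} g(y)\,dy$, so that only the limits of integration depend on $x$. For $x_1 < x_2$, the symmetric difference of the intervals $[x_1-1,x_1+1]$ and $[x_2-1,x_2+1]$ has Lebesgue measure at most $2(x_2-x_1)$, and intersecting with $[-\sigma,\sigma]$ can only shrink it. Since $|g| \le M$, the triangle inequality yields
\[
|F(x_2) - F(x_1)| \;\le\; 2M\,(x_2 - x_1),
\]
so $h$ is Lipschitz with constant $2|\alpha|M$. Applying this to $g(x) = 1 - h(x)$ finishes the argument with the same constant.

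The orthogonal case is essentially trivial since $m(O)(\xi) = 1/2$ is constant, so $h$ itself is constant and $g$ is even smoother; I would mention this briefly but handle the three remaining groups uniformly through the decomposition above. The main potential obstacle is verifying carefully that the measure of the symmetric difference really is at most $2(x_2-x_1)$ once we intersect with $[-\sigma,\sigma]$ — but since intersection with a fixed set cannot increase measure, this step is straightforward. The bound does \emph{not} require any smoothness of $g$, only the boundedness established in Lemma \ref{BoundedBeforeLip}, which is precisely why the preceding lemma was set up that way. The resulting Lipschitz constant depends only on $\|g\|_\infty$ and the group-dependent coefficient $\alpha \in \{\pm 1/2\}$, which sets up the subsequent invocation of Rademacher's theorem to deduce differentiability almost everywhere.
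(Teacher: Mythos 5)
Your proposal is correct and is essentially the paper's argument: both use the integral equation $g = 1 - \int m(\cdot-y)g(y)\,dy$, the boundedness of $g$ from Lemma \ref{BoundedBeforeLip}, and the fact that the shifted unit intervals differ on a set of measure at most $2|x_1-x_2|$, yielding a Lipschitz constant of order $\|g\|_\infty$. Your explicit splitting of $m$ into a constant plus $\pm\frac{1}{2}\mathrm{I}_{[-1,1]}$ is just a more explicit bookkeeping of the paper's observation that the difference $m(x_1-y)-m(x_2-y)$ is bounded by $1/2$ and supported on that symmetric difference.
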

\begin{proof}
  Using the optimality criterion (\ref{eq:0}), we see that for $x_1,x_2 \in [-\sigma,\sigma]$, 
  \begin{equation}
  \begin{aligned}
  \left \vert g(x_1) - g(x_2) \right \vert &= \left \vert \int_{-\sigma}^{\sigma}(m(x_1 - y) - m(x_2 - y))g(y) \ dy \ \right \vert \\
    &\le \int_{-\sigma}^{\sigma} |m(x_1 - y) - m(x_2 - y)| |g(y)| \ dy \\
    &\le \max_{y \in [-\sigma,\sigma]}|g(y)| \int_{-\sigma}^{\sigma} |m(x_1 - y) - m(x_2 - y)| \ dy. 
    \label{LipschitzArg}
  \end{aligned}
  \end{equation}

  Now, we analyze (\ref{LipschitzArg}). Note that for all choices of $m$ in \eqref{mfunctions}, the integrand is bounded by 1/2. Now, examine the region of integration. Without loss of generality, assume $x_1 \ge x_2$.  \\
  
  Note that our integrand vanishes everywhere except from $\max\left\{ -\sigma,x_2 - 1 \right\}$ to $\min\{x_1 - 1$, $x_2 + 1, \sigma \}$ and again from $\max\left\{ x_2 + 1, x_1 - 1, - \sigma \right\}$ to $\min\left\{ x_1 + 1, \sigma \right\}$. The size of this region does not scale with $\sigma$, since if $-\sigma \ge x_2 - 1$ and $\sigma \le x_2 + 1$, then $\sigma \le 1$. In fact, this region has measure at most $\min\left\{ 2(x_1 - x_2), 4 \right\}$. As a result, we may revise the inequality in (\ref{LipschitzArg}): 
  \begin{equation}
	  \begin{aligned}
	\left \vert g(x_1) - g(x_2) \right \vert  & \le \max_{y \in [-\sigma,\sigma]}|g(y)| \int_{-\sigma}^{\sigma} |m(x_1 - y) - m(x_2 - y)| \ dy \\
    &\le \max_{y \in [-\sigma,\sigma]}|g(y)| (2|x_1 - x_2|) \left( \frac{1}{2} \right) \\
    &\le \max_{y \in [-\sigma,\sigma]} |g(y)| |x_1 - x_2|. 
    \label{CompletedLipschitzArg}
	  \end{aligned}
  \end{equation}
\end{proof}

\begin{remark}
	When $\sigma \le .5$, each choice of $m$ takes a uniform value on $[-\sigma,\sigma]$. From \eqref{LipschitzArg}, we can deduce that when $\sigma \le .5$, all optimal $g$ are constant functions. 
	\label{ReallySmallSigmaImpliesConstant}
\end{remark}

We now use a Theorem of Rademacher to show that our function $g$ is differentiable almost everywhere. 
\begin{theorem}[Rademacher]
  Let $\Omega \subset \R^n$ be open. If $f: \Omega \longrightarrow \R$ is Lipschitz continuous, then $f$ is differentiable almost everywhere in $\Omega.$ 
  \label{RademacherTheorem}
\end{theorem}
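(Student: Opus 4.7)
The plan is to prove Rademacher's theorem by reducing the problem to one-dimensional differentiability via directional derivatives, then assembling the pieces using the Lipschitz property together with a density argument on the sphere. First, I would extend $f$ to all of $\R^n$ using the McShane extension (setting $\tilde f(x) = \inf_{y \in \Omega}\{f(y) + L|x - y|\}$, where $L$ is the Lipschitz constant), so we may assume $\Omega = \R^n$ without loss of generality.

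Next, for each fixed direction $v \in S^{n-1}$, the function $t \mapsto f(x + tv)$ is $L$-Lipschitz in $t$, hence absolutely continuous on $\R$, hence differentiable for a.e.\ $t \in \R$ by the classical one-dimensional theory. An application of Fubini's theorem then shows that the directional derivative $D_v f(x)$ exists for a.e.\ $x \in \R^n$. Specializing to $v = e_i$ yields that all partial derivatives exist a.e., so $\nabla f(x)$ is defined on a set of full measure. A short integration-by-parts argument against compactly supported smooth test functions $\varphi$, namely
\[
  \int D_v f \cdot \varphi \, dx \;=\; -\int f \, (\nabla \varphi \cdot v) \, dx \;=\; \sum_i v_i \int \partial_i f \cdot \varphi \, dx,
\]
shows that $D_v f(x) = \nabla f(x) \cdot v$ for a.e.\ $x$, for each fixed $v$.

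The main obstacle is upgrading from ``directional derivatives exist in every direction and agree with $\nabla f \cdot v$'' to full Fréchet differentiability at a.e.\ point, since the former is only a pointwise-in-$v$ statement. Here I would fix a countable dense set $\{v_k\} \subset S^{n-1}$ and let $N$ be the union of the null sets on which $D_{v_k} f(x) \neq \nabla f(x) \cdot v_k$, together with the null set where $\nabla f$ fails to exist. For $x \notin N$, define the difference quotient
\[
  Q(v, t) \;=\; \frac{f(x + tv) - f(x)}{t} - \nabla f(x) \cdot v.
\]
The Lipschitz hypothesis gives $|Q(v, t) - Q(w, t)| \leq (L + |\nabla f(x)|) |v - w|$, so the family $\{Q(\cdot, t)\}_{t > 0}$ is equicontinuous on $S^{n-1}$. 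Since $Q(v_k, t) \to 0$ as $t \to 0$ for each $k$, a standard $\varepsilon/3$ argument using equicontinuity together with compactness of $S^{n-1}$ promotes this to uniform convergence $Q(v, t) \to 0$ over $v \in S^{n-1}$, which is precisely Fréchet differentiability of $f$ at $x$.

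The routine pieces are the one-dimensional reduction and the Fubini and distributional arguments; the delicate step is the final equicontinuity-plus-density argument, and it is essential there that the Lipschitz constant controls the modulus of $Q$ in $v$ uniformly in $t$.
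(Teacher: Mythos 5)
Your argument is correct, and it is essentially the classical proof of Rademacher's theorem (the one found in Evans--Gariepy): extend by McShane, get one-dimensional a.e.\ differentiability along each fixed direction, promote to a.e.\ existence of $D_v f$ by Fubini, identify $D_v f = \nabla f \cdot v$ a.e.\ via the distributional/test-function identity, and then upgrade to Fr\'echet differentiability at a.e.\ point using a countable dense set of directions together with the equicontinuity estimate $|Q(v,t)-Q(w,t)| \le (L + |\nabla f(x)|)|v-w|$ and compactness of $S^{n-1}$. The paper itself does not prove the theorem at all --- it simply cites Federer (Theorem 3.1.6 of \cite{Fed}) --- so your write-up supplies a genuine self-contained proof where the paper offers only a reference. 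Two routine points you glossed over and should state explicitly if you flesh this out: the set where $D_v f$ fails to exist must be shown measurable before Fubini can be applied (this follows because the $\limsup$ and $\liminf$ of the difference quotients can be taken over rational $t$ by continuity of $f$, hence are Borel), and the passage to the limit in the identity $\int D_v f\,\varphi = -\int f\,(\nabla\varphi\cdot v)$ needs dominated convergence, which is justified because the difference quotients are uniformly bounded by the Lipschitz constant $L$. With those remarks added, your proof is complete and matches the standard treatment.
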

\begin{proof}
  See \cite{Fed}, Theorem 3.1.6. 
\end{proof}
\begin{corollary}
  The optimal $g(x)$ is differentiable almost everywhere. 
  \label{gisdiffa.e.}
\end{corollary}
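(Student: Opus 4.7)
The plan is to apply Rademacher's theorem (Theorem \ref{RademacherTheorem}) directly, since Lemma \ref{LipschitzLemma} already supplies the hypothesis. Concretely, I would take $\Omega = (-\sigma, \sigma) \subset \mathbf{R}$, which is open, and restrict $g$ to $\Omega$. The restriction is still Lipschitz continuous with the same Lipschitz constant produced in \eqref{CompletedLipschitzArg}, namely $\max_{y \in [-\sigma,\sigma]} |g(y)|$, which is finite by Lemma \ref{BoundedBeforeLip}. Rademacher's theorem then yields a set $E \subset (-\sigma, \sigma)$ of full Lebesgue measure on which $g$ is differentiable.

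Since the two boundary points $\{-\sigma, \sigma\}$ form a set of Lebesgue measure zero, adding or removing them does not affect the almost-everywhere statement, so $g$ is differentiable almost everywhere on $[-\sigma, \sigma]$, which is the conclusion we want.

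The main (and only) obstacle is essentially bookkeeping: making sure the hypotheses of Rademacher's theorem are formally satisfied. The substantive work was already done in Lemmas \ref{BoundedBeforeLip} and \ref{LipschitzLemma}, where boundedness of $g$ was used to control the Lipschitz constant. So the corollary is a one-line application, and I would write it as such, citing the two results together.
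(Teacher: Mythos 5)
Your proposal is correct and is exactly the argument the paper intends: the corollary is an immediate application of Rademacher's theorem (Theorem \ref{RademacherTheorem}) to the Lipschitz continuity of $g$ from Lemma \ref{LipschitzLemma}, with the open set $\Omega = (-\sigma,\sigma)$ and the measure-zero boundary handled trivially. Nothing further is needed.
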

Finally, we show that each such $g$ is in fact smooth almost everywhere.
\begin{lemma}
  The optimal $g(x)$, as defined above, is smooth almost everywhere.  
  \label{TwiceDiff}
\end{lemma}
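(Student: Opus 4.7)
The plan is to bootstrap from the Lipschitz continuity established in Lemma \ref{LipschitzLemma} using the integral equation \eqref{eq:0}, exploiting the very simple piecewise structure of each density $m$ in \eqref{mfunctions}. The key observation is that every $m$ in that list has the form $m(\xi)=c_1+c_2\,\mathrm{I}_{[-1,1]}(\xi)$ for constants $c_1,c_2$ (with $c_2=0$ in the orthogonal case, which is immediate). Substituting into \eqref{eq:0} and letting $a(x):=\max(x-1,-\sigma)$ and $b(x):=\min(x+1,\sigma)$, I would rewrite the optimality criterion as
\begin{equation*}
g(x)\;=\;1-c_1\!\int_{-\sigma}^{\sigma}\!g(y)\,dy\;-\;c_2\,H(x),\qquad H(x):=\int_{a(x)}^{b(x)}g(y)\,dy.
\end{equation*}
The first integral is a constant, so smoothness of $g$ is equivalent to smoothness of $H$.

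Next I would establish the base case ($C^1$ regularity) via the Leibniz rule for differentiating under an integral with moving endpoints. The functions $a,b$ are continuous and piecewise linear with $a',b'\in\{0,1\}$, having kinks only at the finite set $S_0:=\{\pm\sigma\pm 1\}\cap[-\sigma,\sigma]$. On each open subinterval where $a,b$ are linear, FTC together with continuity of $g$ (from Lemma \ref{LipschitzLemma}) gives
\begin{equation*}
H'(x)\;=\;g(b(x))\,b'(x)\;-\;g(a(x))\,a'(x).
\end{equation*}
Thus $g$ is $C^1$ on $[-\sigma,\sigma]\setminus S_0$.

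The main step is the induction. Assuming $g$ is $C^k$ on $[-\sigma,\sigma]\setminus T_k$ for a finite set $T_k$, I differentiate the Leibniz formula $k$ more times. Because $a'(x),b'(x)\in\{0,1\}$ are locally constant off $S_0$, every higher derivative of $H$ becomes a sum whose only obstruction to existence is that $g^{(k)}$ must exist at $b(x)$ on the region $\{b'=1\}=\{x\le\sigma-1\}$ and at $a(x)$ on $\{a'=1\}=\{x\ge-\sigma+1\}$. Crucially, wherever $a$ or $b$ is constant (equal to $\mp\sigma$), the corresponding derivative factor vanishes, so no singularity at the boundary values of $g$ is propagated from that side. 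Consequently,
\begin{equation*}
T_{k+1}\;\subseteq\;S_0\,\cup\,(T_k-1)\,\cup\,(T_k+1),
\end{equation*}
and by induction $T_k\subseteq\{\pm\sigma+n:n\in\Z\}\cap[-\sigma,\sigma]$, which is a fixed finite set (of size $O(\sigma)$) independent of $k$. The main obstacle I anticipate is precisely this bookkeeping: showing that the ``bad'' set does not proliferate under iteration, which hinges on the observation that the singularities are all integer translates of $\pm\sigma$ and thus confined to a finite arithmetic progression inside $[-\sigma,\sigma]$.

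Finally, setting $T_\infty:=\bigcup_k T_k$, this is a finite (hence Lebesgue-null) subset of $[-\sigma,\sigma]$, and $g$ is $C^\infty$ on its complement. This proves that $g$ is smooth almost everywhere, as claimed.
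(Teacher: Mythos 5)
Your proposal is correct and follows essentially the same route as the paper: the paper likewise rewrites the optimality criterion as a constant plus $\beta_{\mathcal{G}}\int_{\max\{x-1,-\sigma\}}^{\min\{x+1,\sigma\}} g(y)\,dy$ and inducts on the order of differentiability via the fundamental theorem of calculus, starting from the Lipschitz/Rademacher base case. Your more careful bookkeeping of the exceptional set (a finite set of integer translates of $\pm\sigma$) is a refinement the paper only makes explicit later, in Corollary \ref{NotManyBadPointsCor}.
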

\begin{proof}
  We proceed by induction. Our base case, that $g$ is once-differentiable, is established by Corollary \ref{gisdiffa.e.}. Assume that $g$ is $k$-times differentiable almost everywhere. \\

  Note that for any choice of $\mathcal{G},\sigma$, we can write the optimality criterion as 
\begin{equation}
	g(x) = 1 - \left( \alpha_{\mathcal{G}} \int_{-\sigma}^{\sigma} g(y) dy + \beta_{\mathcal{G}} \int_{\max \{x-1, - \sigma \}}^{\min\{x+1,\sigma \}} g(y) dy  
   \right) \label{GrossSimplification}
\end{equation}
where the $\alpha_{\mathcal{G}},\beta_{\mathcal{G}}$ are constants depending on $\mathcal{G}$. We also know that $g$ is continuous. For almost all $x \in [-\sigma,\sigma]$, the limits of integration are smooth functions of $x$. Therefore, the fundamental Theorem of calculus and our hypothesis that $g$ is $k$-times differentiable show that $g$ is in fact $k+1$ times differentiable.   
\end{proof}

\newpage

\section{Explicit Test Functions for Small Support}\label{smallsupportsection}

In this section, we find all optimal test functions for $\sigma \le 1$. First, we show that for $\sigma < 1$, the function we seek is the unique fixed point of a contraction mapping from $C([-\sigma,\sigma])$ to itself. For $\sigma \le .5$, one can find the optimal $g$ by the method of repeated iterations. For $.5 < \sigma < 1$, we find our solution through analyzing the integral equation in the $[-1,1]$ case. 
\subsection{Very Small Support}

Remark \ref{ReallySmallSigmaImpliesConstant} tells us that for $\sigma \le .5$, the optimal $g$ are constant. These constants are not hard to solve for. 
\begin{theorem}
  For $\sigma \le .5$, the optimal test functions for the Orthogonal, SO(even), and SO(odd) groups are given by 
  \[ \phi(x) = \left( \frac{\sin(2 \pi \sigma x)}{(1 + \sigma)\pi x} \right)^{2}. \] 
  The optimal test functions for the Symplectic group are 
  \[ \phi(x) = \left( \frac{\sin(2 \pi \sigma x)}{(1 - \sigma)\pi x} \right)^{2}. \] 
  \label{}
\end{theorem}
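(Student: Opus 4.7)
The plan is to exploit the earlier observation (Remark \ref{ReallySmallSigmaImpliesConstant}) that for $\sigma \le 1/2$ the optimal $g$ must be constant on $[-\sigma,\sigma]$, and then simply solve for the constant in each of the four cases using the optimality criterion $(I+K)g=1$ from Lemma \ref{OptimalCriterionAndComputationLemma}.

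First I would note that for $x,y \in [-\sigma,\sigma]$ with $\sigma \le 1/2$ we have $|x-y|\le 1$, so the indicator $\mathrm{I}_{[-1,1]}(x-y)$ is identically $1$ on the domain of integration. Consequently each kernel $m_{\mathcal G}(x-y)$ reduces to a constant $\mu_{\mathcal G}$ on $[-\sigma,\sigma]\times[-\sigma,\sigma]$, with
\[
\mu_{O}=\mu_{\mathrm{SO(even)}}=\mu_{\mathrm{SO(odd)}}=\tfrac12,\qquad \mu_{\mathrm{Sp}}=-\tfrac12,
\]
read off from \eqref{mfunctions}. Writing $g(x)=c$ and plugging into \eqref{CriterionForOptimal} gives
\[
c + 2\sigma\mu_{\mathcal G}\,c = 1,
\]
so $c=1/(1+\sigma)$ in the orthogonal, SO(even), and SO(odd) cases and $c=1/(1-\sigma)$ in the symplectic case. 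By uniqueness (Theorem \ref{FredholmAlternative}) these constants are the optimal $g$.

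Next I would recover $\phi$ from $g$ by the machinery of Section \ref{GallagherSection}. Setting $h=\mathcal F^{-1}(g)$, a direct computation gives
\[
h(x) = c\int_{-\sigma}^{\sigma} e^{2\pi i\xi x}\,d\xi = c\,\frac{\sin(2\pi\sigma x)}{\pi x}.
\]
Applying Corollary \ref{splitcor} yields $\phi(x)=|h(x)|^2$, which reproduces the two advertised formulas once $c$ is substituted.

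I do not anticipate any real obstacle here: the work is entirely bookkeeping once Remark \ref{ReallySmallSigmaImpliesConstant} has collapsed the infinite-dimensional optimization to choosing a single real constant. The only small subtlety to flag is making sure that the trivial constant $g$ genuinely corresponds to an admissible $\phi$ in the sense of Lemma \ref{NonzeroIntegralLemma}; this is immediate since $\langle 1,g\rangle=2\sigma c\neq 0$, so $\phi(0)=|h(0)|^2=(2\sigma c)^2>0$ and $\phi\ge 0$ automatically as a squared modulus.
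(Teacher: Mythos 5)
Your proposal is correct and follows essentially the same route as the paper: both use the fact that for $\sigma \le 1/2$ all the kernels $m_{\mathcal{G}}(x-y)$ are constant on $[-\sigma,\sigma]$ (so the optimal $g$ is the constant solving $(I+K)g=1$, namely $1/(1+\sigma)$ or $1/(1-\sigma)$), and then recover $\phi=|\mathcal{F}^{-1}(g)|^2$ via Corollary \ref{splitcor}. The only cosmetic difference is that the paper dispatches the O, SO(even), SO(odd) cases by citing the orthogonal computation of Section \ref{orthogsec}, whereas you redo that one-line computation uniformly for all four groups.
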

\begin{proof}

   The orthogonal case has been proven for all $\sigma$ in Section \ref{orthogsec}. For $\sigma \le .5$, the kernels for $\textrm{SO(even)}$ and $\textrm{SO(odd)}$ agree with the orthogonal kernel, proving the first part of the Theorem. \\
   
   We know a constant function satisfies the optimality criterion and a quick check shows $1/(1-\sigma)$ is the constant we seek. \\

We then square the Fourier inverse to find that the optimal $\phi$ for the Symplectic group, in this range of support, is given by 
\begin{equation}
  \phi(x) = \left( \frac{\sin(2\pi \sigma x)}{(1 - \sigma)\pi x} \right)^2. 
  \label{OptimalPhiSymplecticSmallSupport}
\end{equation}
\end{proof}
\subsection{The $[-1,1]$ Case}
We begin with a series of Theorems concering unique solutions to certain integral equations. We start by analyzing (\ref{eq:0}). As $g, m$ are even, in the case $\sigma = 1$, we may simplify (\ref{eq:0}) in each of the cases $\textrm{SO(even)}, \ \textrm{SO(odd)}$, and $\textrm{Sp}$. For $\textrm{SO(even)}$, (\ref{eq:0}) becomes
\begin{equation}
  \frac{1}{2}\int_{0}^{1} g(y) dy + \frac{1}{2} \int_{0}^{1 - x}g(y) dy + g(x) = 1.
  \label{SOEvenSimplification-1,1}
\end{equation}
The $\textrm{SO(odd)}$ equation becomes 
\begin{equation}
  \frac{3}{2} \int_{0}^{1}g(y) dy - \frac{1}{2} \int_{0}^{1 - x}g(y) dy + g(x) = 1.
  \label{SOOddSimplification-1,1}
\end{equation}
The Symplectic equation becomes 
\begin{equation}
  -\frac{1}{2} \int_{0}^{1}g(y) dy - \frac{1}{2} \int_{0}^{1-x}g(y) dy + g(x) = 1.
  \label{SpSimplification-1,1}
\end{equation}
All equations hold for $0 \le x \le 1$, and $g$ is then the even extension of the function defined for these values of $x$. \\

We present the general form of the three equations above as 
\begin{equation}
  \alpha_{\mathcal{G}} \int_{0}^{1} g(y) dy + \beta_{\mathcal{G}} \int_{0}^{1-x}g(y) dy + g(x) = 1, 
  \label{GeneralFormEquations}
\end{equation}
where values for $\alpha_{\mathcal{G}},\beta_{\mathcal{G}}$ are given in (\ref{SOEvenSimplification-1,1}) - (\ref{SpSimplification-1,1}). \\

In \cite{ILS}, Appendix A, based on a private communication with J. Vanderkam, the optimal test functions for the $[-1,1]$ case are found explicitly. Here, describe a (potentially different) methodology that fits into the framework of our main result. We show how one can deduce the function is first-order trigonometric. The argument hinges differentiation under the integral sign. 
\begin{theorem}[Liebniz]
  Let $f(x,y)$ be a function such that $f_{x}(x,y)$ exists and is continuous. Then 
  \begin{equation}
    \frac{d}{dx} \int_{a(x)}^{b(x)} f(x,y) dy = \int_{a(x)}^{b(x)} \partial_{x} f(x,y) dy + f(b(x),y)b'(x) - f(a(x), y)a'(x). 
    \label{LiebnizRuleEqn}
  \end{equation}
  \label{LiebnizRuleTheorem}
\end{theorem}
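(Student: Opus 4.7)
The plan is to treat Leibniz's rule as the chain rule applied to a three-variable function, then identify each partial derivative separately using either the fundamental theorem of calculus or the standard differentiation-under-the-integral-sign theorem for \emph{fixed} limits.

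Specifically, I would introduce the auxiliary function
\begin{equation*}
F(x,u,v) \;=\; \int_{u}^{v} f(x,y)\,dy,
\end{equation*}
so that the quantity to be differentiated is $G(x) := F(x,a(x),b(x))$. Assuming $a,b$ are differentiable and $F$ is $C^{1}$ in its three arguments (which is what we will establish), the multivariable chain rule gives
\begin{equation*}
G'(x) \;=\; \frac{\partial F}{\partial x}\bigl(x,a(x),b(x)\bigr) \;+\; \frac{\partial F}{\partial u}\bigl(x,a(x),b(x)\bigr)\,a'(x) \;+\; \frac{\partial F}{\partial v}\bigl(x,a(x),b(x)\bigr)\,b'(x).
\end{equation*}
The last two partial derivatives are immediate from the fundamental theorem of calculus: $\partial_{v}F = f(x,v)$ and $\partial_{u}F = -f(x,u)$.

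The substantive step is computing $\partial_{x}F$, i.e.\ justifying the exchange
\begin{equation*}
\frac{\partial}{\partial x}\int_{u}^{v} f(x,y)\,dy \;=\; \int_{u}^{v} \partial_{x}f(x,y)\,dy
\end{equation*}
for fixed $u,v$. I would argue this by forming the difference quotient, writing
\begin{equation*}
\frac{F(x+h,u,v) - F(x,u,v)}{h} \;=\; \int_{u}^{v} \frac{f(x+h,y) - f(x,y)}{h}\,dy,
\end{equation*}
and applying the mean value theorem in the first slot so that the integrand equals $\partial_{x}f(\xi_{h}(y),y)$ for some $\xi_{h}(y)$ between $x$ and $x+h$. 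Since $\partial_{x}f$ is continuous on a compact neighborhood of $\{x\}\times[u,v]$, it is uniformly continuous there, and hence the integrand converges \emph{uniformly} in $y$ to $\partial_{x}f(x,y)$ as $h \to 0$. Uniform convergence on a bounded interval justifies passing the limit inside the integral. The same uniform-continuity argument shows $\partial_{x}F$ is continuous in $(x,u,v)$, which (together with continuity of the other two partials, coming from continuity of $f$) justifies the chain rule hypothesis above.

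The main obstacle is the interchange of limit and integral, which requires only continuity of $f_{x}$ plus the compactness of the domain of integration; once that is in hand the rest is bookkeeping. Substituting the three partials back into the chain rule expression and evaluating at $(x,a(x),b(x))$ produces exactly \eqref{LiebnizRuleEqn}, completing the proof.
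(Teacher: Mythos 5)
Your argument is correct, and there is nothing in the paper to compare it against: the paper states this Leibniz rule without proof, invoking it as a classical fact (much like the Paley--Wiener, Ahiezer, and Rademacher theorems cited elsewhere in the text). Your route --- introducing $F(x,u,v)=\int_{u}^{v}f(x,y)\,dy$, applying the multivariable chain rule to $G(x)=F(x,a(x),b(x))$, getting $\partial_u F$ and $\partial_v F$ from the fundamental theorem of calculus, and justifying $\partial_x F(x,u,v)=\int_u^v \partial_x f(x,y)\,dy$ via the mean value theorem in the first slot plus uniform continuity of $f_x$ on a compact neighborhood of $\{x\}\times[u,v]$ --- is the standard textbook proof, and the interchange of limit and integral is handled properly. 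Two small points of hygiene: the fundamental-theorem-of-calculus step and the continuity of $\partial_u F$ and $\partial_v F$ (needed to invoke the $C^1$ chain rule) require continuity of $f$ itself, not merely of $f_x$, and $a$, $b$ must be differentiable; these hypotheses are implicit in the theorem as the paper uses it, so you should state them explicitly rather than rely on the paper's (incomplete) hypothesis. Note also that the paper's displayed formula contains a typographical slip --- the boundary terms should read $f(x,b(x))\,b'(x) - f(x,a(x))\,a'(x)$ --- and your chain-rule computation correctly produces these terms.
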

  We now derive the unique solutions to (\ref{SOEvenSimplification-1,1}) - (\ref{SpSimplification-1,1}). 
  \begin{theorem}
    The function 
    \begin{equation}
      g(x) = \frac{\cos\left( \frac{|x|}{2} - \frac{(\pi + 1)}{4} \right)}{\sqrt{2}\sin\left( \frac{1}{4} \right) + \sin \left( \frac{\pi + 1}{4} \right)}
      \label{SOEven-1,1Solution}
    \end{equation}
    is the unique solution to (\ref{SOEvenSimplification-1,1}), 
    \begin{equation}
      g(x) =  \frac{\cos\left( \frac{|x|}{2} + \frac{\pi - 1}{4} \right)}{3\sin\left( \frac{\pi + 1}{4} \right) - 2\sin \left( \frac{\pi - 1}{4} \right)}
      \label{SOOdd-1,1Solution}
    \end{equation}
    is the unique solution to (\ref{SOOddSimplification-1,1}), and
    \begin{equation}
      g(x) = \frac{\cos\left( \frac{|x|}{2} + \frac{\pi - 1}{4} \right)}{2\sin\left( \frac{\pi - 1}{4} \right) - \cos\left( \frac{\pi - 1}{4} \right)}
      \label{Sp-1,1Solution}
    \end{equation}
    is the unique solution to (\ref{SpSimplification-1,1}).
    \label{TheThree-1,1Solutions}
  \end{theorem}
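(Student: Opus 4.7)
The plan is to convert each integral equation into a second-order linear ODE by differentiating twice using the Leibniz rule, solve the ODE, and then pin down the two free constants from the functional relation forced by the first derivative and from the original integral equation itself. Uniqueness is already guaranteed by the Fredholm theory of Section \ref{arbsupportsec}, so it suffices to verify that the stated formulas satisfy each equation; the derivation below simultaneously shows how one arrives at them.

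Write each of \eqref{SOEvenSimplification-1,1}--\eqref{SpSimplification-1,1} in the unified form of \eqref{GeneralFormEquations},
\[
  \alpha_{\mathcal{G}} \int_{0}^{1} g(y)\,dy + \beta_{\mathcal{G}} \int_{0}^{1-x} g(y)\,dy + g(x) = 1,
\]
so that the first term is a constant in $x$. Differentiating once via Theorem \ref{LiebnizRuleTheorem} (justified by Lemma \ref{TwiceDiff}) kills the first integral and produces a boundary contribution from the moving upper limit, yielding
\[
  g'(x) = \beta_{\mathcal{G}}\, g(1-x).
\]
Differentiating this identity a second time and re-substituting gives $g''(x) = -\beta_{\mathcal{G}} g'(1-x) = -\beta_{\mathcal{G}}^{2} g(x)$. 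Since $\beta_{\mathcal{G}} = \pm \tfrac12$ in all three cases, the ODE is
\[
  g''(x) + \tfrac14 g(x) = 0, \qquad x \in [0,1],
\]
whose general solution is $g(x) = A\cos(x/2) + B\sin(x/2)$.

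Next I would feed this general solution back into the relation $g'(x) = \beta_{\mathcal{G}} g(1-x)$. Expanding $\cos((1-x)/2)$ and $\sin((1-x)/2)$ by angle-addition and matching coefficients of $\cos(x/2)$ and $\sin(x/2)$ produces a linear system for $A,B$ whose consistency is automatic (it amounts to $\cos^{2}(1/2) + \sin^{2}(1/2) = 1$) and which fixes the ratio $B/A$. Using the identity $\tfrac{1+\sin\theta}{\cos\theta} = \tan\!\bigl(\tfrac{\pi}{4}+\tfrac{\theta}{2}\bigr)$ with $\theta = 1/2$, one recognizes the ratio as $\tan((\pi+1)/4)$ when $\beta_{\mathcal{G}} = 1/2$ (the SO(even) case) and as $-\tan((\pi-1)/4)$ when $\beta_{\mathcal{G}} = -1/2$ (the SO(odd) and Sp cases). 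Rewriting $A\cos(x/2)+B\sin(x/2)$ as a single cosine with a phase then yields, up to a constant $C$,
\[
  g(x) = C\cos\!\left(\tfrac{x}{2} - \tfrac{\pi+1}{4}\right) \quad \text{or} \quad g(x) = C\cos\!\left(\tfrac{x}{2} + \tfrac{\pi-1}{4}\right),
\]
and the even extension (Lemma \ref{evenlemma}) replaces $x$ by $|x|$.

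Finally, the amplitude $C$ is fixed by plugging the candidate back into the original integral equation and evaluating at a convenient point, e.g.\ $x=0$, where both integrals reduce to $\int_{0}^{1}g$. The integral of a shifted cosine over $[0,1]$ collapses via the sum-to-product identity $\sin\!\bigl(\tfrac{\pi+1}{4}\bigr) - \sin\!\bigl(\tfrac{\pi-1}{4}\bigr) = \sqrt{2}\sin(1/4)$, and a parallel identity handles the SO(odd) and Sp cases with the opposite phase. Matching the resulting expression to the right-hand side $1$ gives exactly the denominators in \eqref{SOEven-1,1Solution}--\eqref{Sp-1,1Solution}. I expect the main obstacle to be purely bookkeeping: the three cases share the ODE but differ in $\alpha_{\mathcal{G}}$ and the sign of $\beta_{\mathcal{G}}$, so one must carefully track signs in the angle-addition expansions and in the normalization step, particularly for the Sp case where $\alpha_{\mathcal{G}}$ is negative and the cancellation pattern in the final denominator differs from the SO(odd) case despite the shared phase $(\pi-1)/4$.
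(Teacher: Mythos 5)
Your proposal is correct and follows essentially the same route as the paper: the paper's Lemma \ref{UniqueOneParameterFamily} derives exactly the delay relation $g'(x) = \beta_{\mathcal{G}}g(1-x)$ and the ODE $g'' + \tfrac14 g = 0$ by Leibniz differentiation, reduces the two-parameter family to a one-parameter family by the same coefficient-matching (rank-one) argument, and the proof of Theorem \ref{TheThree-1,1Solutions} then fixes the phase and determines the amplitude by substituting the candidate cosine back into \eqref{GeneralFormEquations}, with uniqueness supplied by the Fredholm alternative just as you invoke it. Your phase computations ($\tan((\pi+1)/4)$ for $\beta_{\mathcal{G}}=1/2$, $-\tan((\pi-1)/4)$ for $\beta_{\mathcal{G}}=-1/2$) and the normalization at $x=0$ agree with the paper's constants after elementary trigonometric simplification.
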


Although our differential equations hold only almost everywhere, we are still able to establish the following. 
\begin{lemma}
	For each group, the optimal $g$ satisfies 
  \begin{equation}
	  g'(x) \ = \ \beta_{\mathcal{G}} g(1 - x). 
    \label{HelpfulLemmaEqn}
  \end{equation}
  and 
  \begin{equation}
	  g''(x) + \frac{1}{4} g(x) \ = \ 0. 
	  \label{SecondOptGEquation}
  \end{equation}
  placing the optimal $g$ in a one-parameter family, depending on the symmetry group.  
  \label{UniqueOneParameterFamily}
\end{lemma}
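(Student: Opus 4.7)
The plan is to differentiate the optimality criterion \eqref{GeneralFormEquations} twice, using the smoothness established in Lemma \ref{TwiceDiff} and Leibniz's rule (Theorem \ref{LiebnizRuleTheorem}), then read off the claimed identities.

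First, I would rewrite \eqref{GeneralFormEquations} for $0 \le x \le 1$ and differentiate once in $x$. The first term $\alpha_{\mathcal{G}}\int_0^1 g(y)\,dy$ is independent of $x$ and drops out. Applying Theorem \ref{LiebnizRuleTheorem} to the second term (whose integrand $g(y)$ has no $x$-dependence, so only the moving endpoint contributes) yields
\begin{equation}
\frac{d}{dx}\int_0^{1-x} g(y)\,dy \;=\; -g(1-x),
\end{equation}
which, together with $g'(x)$ from the last summand, gives exactly
\begin{equation}
g'(x) \;=\; \beta_{\mathcal{G}}\, g(1-x). \label{planfirst}
\end{equation}
This is valid a.e. by Corollary \ref{gisdiffa.e.}, and in fact on the open set where $g$ is differentiable per Lemma \ref{TwiceDiff}.

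Next I would differentiate \eqref{planfirst} once more. The right-hand side is a composition of $g$ with the smooth map $x \mapsto 1-x$, and by Lemma \ref{TwiceDiff} we may invoke the chain rule almost everywhere to get $g''(x) = -\beta_{\mathcal{G}}\, g'(1-x)$. Substituting the identity \eqref{planfirst} applied at the point $1-x$, namely $g'(1-x) = \beta_{\mathcal{G}}\, g(x)$, produces
\begin{equation}
g''(x) \;=\; -\beta_{\mathcal{G}}^{2}\, g(x).
\end{equation}
A direct inspection of \eqref{SOEvenSimplification-1,1}--\eqref{SpSimplification-1,1} shows $\beta_{\mathcal{G}} \in \{\tfrac12, -\tfrac12\}$ in each of the three cases, so $\beta_{\mathcal{G}}^{2} = \tfrac14$ uniformly, yielding the second claimed equation
\begin{equation}
g''(x) + \tfrac{1}{4}\, g(x) \;=\; 0.
\end{equation}

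The general solution of this ODE is the two-parameter family $A\cos(x/2) + B\sin(x/2)$, and the boundary-type constraint \eqref{planfirst} (a functional equation tying $g'$ on $[0,1]$ to $g$ on the reflected interval) cuts this down to a one-parameter family parametrized by the symmetry group $\mathcal{G}$, as claimed. The main technical obstacle I foresee is not the calculus itself but ensuring the a.e.\ identities extend to genuine pointwise identities of continuous functions: since Lemma \ref{LipschitzLemma} gives continuity of $g$ and the right-hand side of \eqref{planfirst} is continuous in $x$, the a.e.\ equality upgrades to equality everywhere on $[0,1]$, and similarly for the second derivative relation once we note that $g'$, defined a.e., admits a continuous representative by the same bootstrapping used in Lemma \ref{TwiceDiff}.
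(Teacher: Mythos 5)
Your derivation is correct and is essentially the paper's proof: differentiate the (even-symmetrized) integral equation under the integral sign to get $g'(x) = \beta_{\mathcal{G}}\,g(1-x)$, then differentiate again, substitute the same identity at $1-x$, and use $\beta_{\mathcal{G}}^{2} = \tfrac14$ to obtain $g'' + \tfrac14 g = 0$. The only step the paper makes explicit that you assert is the final ``cuts down to a one-parameter family'' claim: the paper checks that imposing \eqref{HelpfulLemmaEqn} on $c_1\cos(x/2)+c_2\sin(x/2)$ gives a $2\times 2$ linear condition whose matrix is nonzero but has determinant $1-4\beta_{\mathcal{G}}^{2}=0$, hence rank one, which is the precise content behind your assertion.
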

\begin{proof}
For $x \in (0,1)$, the optimality criterion can be written as 
\[ 
	g(x) + \beta_{\mathcal{G}}\int_{x-1}^{1} g(y) dy \ = \ 1
\]
which we differentiate under the integral sign to obtain 
\[ 
	g'(x) - \beta_{\mathcal{G}} g(x-1) \ = \ 0 
\]
which becomes \eqref{HelpfulLemmaEqn}. \\

However, we note that when $x \in (0,1)$, $1-x \in (0,1)$. Differentiating \eqref{HelpfulLemmaEqn}, we see 
 \[ 
	 g''(x) \ = \ - \beta_{\mathcal{G}} g'(1-x) \ = \ - \beta_{\mathcal{G}}^2 g(x)
 \]
 which is \eqref{SecondOptGEquation} in all cases, since $\beta_{\mathcal{G}} = \pm 1/2$. \\

  
  We are left with a standard ODE that is both Lipschitz continuous and measurable in the input, $f$. So, there is a unique absolutely continuous solution in the extended sense for our function on $A$. However, absolute continuity is no restriction, since Lemma \ref{LipschitzLemma} shows us that the optimal $g$ is in fact Lipschitz continuous. For more detail, we refer the reader to \cite{Wal}, Chapter 3, Section 10, Supplement II. 
  
  Equation \eqref{SecondOptGEquation} is a standard linear differential equation that has a two-parameter family of solutions given by  
  \begin{equation}
    c_1 \cos \left( \frac{x}{2} \right) + c_2 \sin \left( \frac{x}{2}  \right). 
    \label{TwoParameterFamily0}
  \end{equation}
  We now apply the symmetry from (\ref{HelpfulLemmaEqn}) to narrow this family down to a one-parameter family. The differential equation (\ref{HelpfulLemmaEqn}) and trigonometric angle addition formulae yield the relation
  \begin{align*}
    \frac{1}{2}\left( -c_1\sin\left( \frac{x}{2} \right) + c_2 \cos \left( \frac{x}{2} \right) \right) &=  \beta_{\mathcal{G}} \left( c_1 \cos\left( \frac{1}{2}\right) + c_{2}\sin\left( \frac{1}{2} \right)  \right) \cos\left( \frac{x}{2}\right) \\
    &+ \beta_{\mathcal{G}} \left( c_1 \sin\left( \frac{1}{2} \right) - c_{2} \cos\left( \frac{1}{2} \right) \right)\sin\left( \frac{x}{2} \right). 
    \label{NoNumber,ButTheLinAlgEqn}
  \end{align*}
  In order for the expression above to vanish, we need the coefficients on $\cos(x/2)$ and $\sin(x/2)$ to both be zero. This translates into the requirement that the vector
  $\begin{pmatrix}
    c_1 \\
    c_2
  \end{pmatrix}$ be in the nullspace of the matrix 
  \begin{equation}
      \begin{pmatrix}
	2 \beta_{\mathcal{G}} \cos(1/2) & 2\beta_{\mathcal{G}}\sin(1/2) - 1 \\
    2\beta_{\mathcal{G}}\sin(1/2) + 1 & -2\beta_{\mathcal{G}}\cos(1/2)
  \end{pmatrix}. 
  \label{NullspaceRequirement0}
\end{equation}
Note the matrix in (\ref{NullspaceRequirement0}) has determinant 
\begin{equation}
  -4\beta_{\mathcal{G}}^2(\sin^2(1/2) + \cos^2(1/2)) + 1 = 0
  \label{ZeroDeterminant}
\end{equation}
because $\beta_{\mathcal{G}} = \pm 1/2$. So, it is of rank one. In fact, (\ref{SOEven-1,1Solution}) - (\ref{Sp-1,1Solution}) are non-trivial solutions to \eqref{HelpfulLemmaEqn} and \eqref{SecondOptGEquation}. Thus, the solutions to those differential equation are among the scalar multiples of a single nonzero solution. 
\end{proof}

We are now ready to prove Theorem \ref{TheThree-1,1Solutions}. 

\begin{remark}
	The case before this was quite nice, since the optimal functions are constant. This case is also nice, in fact nicer than the case in which $.5 < \sigma < 1$. That is because whenever $0 < x < \sigma = 1$, we also have $0 < 1-x < \sigma = 1$, which is not always true for $.5 < \sigma < 1$. The nature of the kernels $m$ (from \eqref{mfunctions}) tells us that the values of $g$ at $x+1, x-1$, or $1-x$, if we get to use symmetry, affect the value of $g$  or $g'$ at $x$. Thus, whether or not $1-x \in [-\sigma,\sigma]$ requires us to break the problem into cases, depending on when $1 - x \in [-\sigma,\sigma]$ or $1 - x \not \in [-\sigma,\sigma]$.   
\end{remark}

  \begin{proof}[(Proof of Theorem \ref{TheThree-1,1Solutions})]
    Note that once we establish that the functions (\ref{SOEven-1,1Solution}) - (\ref{Sp-1,1Solution}) satisfy their respective equations we are done, as uniqueness follows from Corollary \ref{HomogEqnOnlyTrivSoln} and the Fredholm alternative. \\

    We first solve for the functions for $0 \le x \le 1$, which allows us to incorporate the simplified forms (\ref{SOEvenSimplification-1,1}) - (\ref{SpSimplification-1,1}). The functions (\ref{SOEven-1,1Solution}) - (\ref{Sp-1,1Solution}) are the even extensions of the functions we will find.  \\

	From Lemma \ref{UniqueOneParameterFamily}, we know that the one-parameter family we seek falls within the two-parameter family
	\[ 
		c_1 \cos \left( \frac{x}{2} \right) + c_2 \sin \left( \frac{x}{2} \right) 
	\]
	Without loss of generality we can write $g(x) = \cos(a_{\mathcal{G}}x + b_{\mathcal{G}})$. By Lemma \ref{UniqueOneParameterFamily}, our optimal test function is a scalar multiple of that $g$. We now compute $a_{\mathcal{G}}$ and $b_{\mathcal{G}}$.  
    \begin{align*}
	    \beta_{\mathcal{G}}\cos(-a_{\mathcal{G}}x + b_{\mathcal{G}} + a_{\mathcal{G}}) &= -a \sin(a_{\mathcal{G}}x + b_{\mathcal{G}}) \\
	    &= a\sin(-a_{\mathcal{G}}x - b_{\mathcal{G}}) \\
	    &= a\cos(-a_{\mathcal{G}}x - b_{\mathcal{G}} - \pi/2), 
    \end{align*}
    which implies that $a_{\mathcal{G}} = \beta_{\mathcal{G}}$ and that $b_{\mathcal{G}}$ satisfies
    \begin{equation}
	    b_{\mathcal{G}} + \beta_{\mathcal{G}} = -b_{\mathcal{G}} - \pi/2, 
      \label{ShiftEquation}
    \end{equation}
    and so $b_{\mathcal{G}} = -\pi/4 - \beta_{\mathcal{G}}/2$. \\

    Define $f_{\mathcal{G}}$ by 
    \begin{equation}
      f_{\mathcal{G}}(x) := 
\begin{cases}
	\cos\left( \beta_{\mathcal{G}}|x| -\left( \frac{\pi + 2\beta_{\mathcal{G}}}{4} \right) \right) \ & |x| \le 1 \\
	0 & |x| > 1.
\end{cases}
     \label{ReferenceForf_G}
    \end{equation}
    As $f_{\mathcal{G}}$ satisfies \eqref{HelpfulLemmaEqn}, plugging $f_{\mathcal{G}}$ into (\ref{GeneralFormEquations}) yields a constant, $c_{\mathcal{G}}$. In each instance, $c_{\mathcal{G}}$, which is in fact nonzero. The scaling factors in (\ref{SOEven-1,1Solution}) - (\ref{Sp-1,1Solution}) are precisely $1/c_{\mathcal{G}}$. 
  \end{proof}
\subsection{Extension To Medium-Small Support Using Integral Equation Methods}

Note that for $.5 < \sigma < 1$, (\ref{eq:0}) simplifies to 
\begin{equation}
  \frac{1}{2}\int_{0}^{\sigma} g(y) dy + \frac{1}{2} \int_{0}^{\min\{\sigma,1 - x\}}g(y) dy + g(x) = 1
  \label{SOEvenSimplificationMediumSmall}
\end{equation}
in the SO(Even) case, 
\begin{equation}
  \frac{3}{2}\int_{0}^{\sigma} g(y) dy - \frac{1}{2} \int_{0}^{\min\{\sigma,1 - x\}}g(y) dy + g(x) = 1
  \label{SOOddSimplificationMediumSmall}
\end{equation}
in the SO(Odd) case, and 
\begin{equation}
  -\frac{1}{2}\int_{0}^{\sigma} g(y) dy - \frac{1}{2} \int_{0}^{\min\{\sigma,1 - x\}}g(y) dy + g(x) = 1
  \label{SymplecticSimplificationMediumSmall}
\end{equation}
in the symplectic case. Again, we present a general form: 
\begin{equation}
  \alpha_{\mathcal{G}}\int_{0}^{\sigma}  g(y) dy + \beta_{\mathcal{G}}\int_{0}^{\min\{\sigma,1 - x\}}g(y) dy + g(x) = 1. 
  \label{GeneralFormMediumSmallSupport}
\end{equation}

\begin{theorem}
  Let $.5 < \sigma < 1$. Then the function
  \begin{equation}
    g_{\textrm{SO(Even)}}(x) = \frac{1}{\gamma_{\textrm{SO(Even)}}}
    \begin{cases}
      \cos\left( \frac{1 - \sigma}{2} - \left( \frac{\pi + 1}{4} \right) \right) &|x| \le 1 - \sigma \\
     \cos\left( \frac{|x|}{2} - \frac{(\pi + 1)}{4} \right) &1 - \sigma \le |x| \le \sigma
    \end{cases}
    \label{SOEvenMediumSmallSolution}
  \end{equation}
  is the unique solution to (\ref{SOEvenSimplificationMediumSmall}), 
  \begin{equation}
    g_{\textrm{SO(Odd)}}(x) = \frac{1}{\gamma_{\textrm{SO(Odd)}}} = 
    \begin{cases}
      \cos\left( \frac{1 - \sigma}{2} + \frac{\pi - 1}{4} \right) &|x| \le 1 - \sigma \\
      \cos\left( \frac{|x|}{2} + \frac{\pi - 1}{4} \right) &1 - \sigma \le |x| \le \sigma
    \end{cases}
    \label{SOOddMediumSmallSolution}
  \end{equation}
  is the unique solution to (\ref{SOOddSimplificationMediumSmall}), and 
  \begin{equation}
    g_{\textrm{Sp}}(x) = \frac{1}{\gamma_{\textrm{Sp}}} 
    \begin{cases}
       \cos\left( \frac{1 - \sigma}{2} + \frac{\pi - 1}{4} \right) &|x| \le 1 - \sigma \\
\cos\left( \frac{|x|}{2} + \frac{\pi - 1}{4} \right) &1 - \sigma \le |x| \le \sigma
    \end{cases}
    \label{SymplecticMediumSmallSolution}
  \end{equation}
  is the unique solution to (\ref{SymplecticSimplificationMediumSmall}). In each instance, $\gamma_{\mathcal{G}}$ is a constant that is precisely computed below (see (\ref{SOEvenFinalConstant}) - (\ref{SymplecticFinalConstant})). 
  \label{MediumSmallSupportTheorem}
\end{theorem}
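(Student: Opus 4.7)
The key structural observation is that $\min\{\sigma, 1-x\}$ equals $\sigma$ on $0 \le x \le 1-\sigma$ and equals $1-x$ on $1-\sigma \le x \le \sigma$, so my plan is to analyze these two regions separately and then glue the pieces by continuity. On the inner region $|x| \le 1-\sigma$, both integrals in \eqref{GeneralFormMediumSmallSupport} are independent of $x$, so the equation forces $g(x)$ to be constant there; by Lemma \ref{evenlemma} this constant extends over the whole symmetric interval $[-(1-\sigma), 1-\sigma]$.

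On the outer region $x \in (1-\sigma, \sigma)$, I would differentiate \eqref{GeneralFormMediumSmallSupport} under the integral sign via Theorem \ref{LiebnizRuleTheorem} to obtain
\begin{equation}
g'(x) \ = \ \beta_{\mathcal{G}} \, g(1-x).
\end{equation}
Since $x \in (1-\sigma, \sigma)$ forces $1-x \in (1-\sigma, \sigma)$, a second differentiation produces the same ODE $g''(x) + g(x)/4 = 0$ encountered in Lemma \ref{UniqueOneParameterFamily}, whose solutions span $c_1 \cos(x/2) + c_2 \sin(x/2)$. Repeating the nullspace argument of that lemma, or equivalently writing $g(x) = \cos(a_{\mathcal{G}} x + b_{\mathcal{G}})$ and imposing $g'(x) = \beta_{\mathcal{G}} g(1-x)$, yields $a_{\mathcal{G}} = \beta_{\mathcal{G}}$ and $b_{\mathcal{G}} = -\pi/4 - \beta_{\mathcal{G}}/2$, which is precisely the outer branch of \eqref{SOEvenMediumSmallSolution}--\eqref{SymplecticMediumSmallSolution}.

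To glue the two pieces, I would invoke continuity of $g$ at $|x| = 1-\sigma$: the inner constant must equal the outer cosine evaluated at $1-\sigma$, which is exactly what the stated formulas assert. The remaining task is to pin down the normalization $\gamma_{\mathcal{G}}$. I would substitute the full piecewise form back into \eqref{GeneralFormMediumSmallSupport}, split the second integral at $1-\sigma$ so that on $[0, 1-\sigma]$ one simply multiplies the length by the known constant and on $[1-\sigma, 1-x]$ one antidifferentiates a cosine, and then equate the result identically to $1$. Uniqueness then follows immediately from the Fredholm alternative together with Corollary \ref{HomogEqnOnlyTrivSoln}, exactly as at the end of the proof of Theorem \ref{TheThree-1,1Solutions}.

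The main obstacle I anticipate is the bookkeeping needed to extract $\gamma_{\mathcal{G}}$ in the closed form indicated by the theorem statement: the constant inner piece contributes to both the $\alpha_{\mathcal{G}}$ and $\beta_{\mathcal{G}}$ integrals, and the cosine contributes sines evaluated at the endpoints $1-\sigma$ and $\sigma$, so one has to reduce a sum of several trigonometric terms and verify that the $x$-dependence cancels (which it must, since the equation is supposed to equal the constant $1$). Once that cancellation is verified, reading off $\gamma_{\mathcal{G}}$ is a matter of trigonometric simplification carried out separately for each of the three $(\alpha_{\mathcal{G}}, \beta_{\mathcal{G}})$ pairs.
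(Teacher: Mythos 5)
Your proposal is correct and follows essentially the same route as the paper: split at $|x| = 1-\sigma$ so the equation forces a constant on the inner piece and the cosine $\cos\bigl(\beta_{\mathcal{G}}|x| - \tfrac{\pi + 2\beta_{\mathcal{G}}}{4}\bigr)$ on the outer piece (the paper simply imports this from the $\sigma = 1$ analysis rather than re-deriving the delay ODE), glue by continuity, scale so that $(I+K)(g) = 1$, and conclude uniqueness from the Fredholm alternative via Corollary \ref{HomogEqnOnlyTrivSoln}. The only cosmetic difference is that the paper fixes $\gamma_{\mathcal{G}}$ by evaluating $(I+K)(\widetilde{g}_{\mathcal{G}})$ at $x=0$ instead of verifying the identity for all $x$, which sidesteps some of the trigonometric bookkeeping you anticipate.
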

\begin{proof}
  As in the proof of Theorem \ref{TheThree-1,1Solutions}, we will find an explicit form for $x \ge 0$. Then, the even extension will satisfy the corresponding equation amongst (\ref{SOEvenSimplificationMediumSmall}) - (\ref{SymplecticSimplificationMediumSmall}). \\

  Note that $\sigma \le 1-x$ if and only if $x \le 1 - \sigma$. So, each of \eqref{SOEvenSimplificationMediumSmall} - (\ref{SymplecticSimplificationMediumSmall}) has two simplifications, depending on $|x|$. The first, for $x \le 1 - \sigma$, is   
  \begin{equation}
    (\alpha_{\mathcal{G}} + \beta_{\mathcal{G}}) \int_{0}^{\sigma} g(y) dy + g(x) = 1, 
    \label{SmallXMediumSmallSupport}
  \end{equation}
  which immediately implies that our function is constant for $|x| \le 1 - \sigma$. We call this constant $C_{\mathcal{G}}$. For $x > 1 - \sigma$, $1 - x < \sigma$, so (\ref{GeneralFormMediumSmallSupport}) becomes 
  \begin{equation}
    \alpha_{\mathcal{G}}\int_{0}^{\sigma} g(y) dy + \beta_{\mathcal{G}}\int_{0}^{1 - x}g(y) dy + g(x) = 1. 
    \label{LargerXMediumSmallSupport}
  \end{equation}
  However, we know from the proof of Theorem \ref{TheThree-1,1Solutions} that $f_{\mathcal{G}}(x) = \cos\left( \beta_{\mathcal{G}} x - \left( \frac{\pi + 2\beta_{\mathcal{G}}}{4} \right)\right)$ satisfies 
  \begin{equation}
    \beta_{\mathcal{G}} \int_{0}^{1-x} f(y) dy + f(x) = \textrm{constant}. 
    \label{StartingIntuition}
  \end{equation}
  We now find the correct scaling in two steps. First, we find $C_{\mathcal{G}}$ so that the function is continuous. Then, we scale that continuous function so that $(I + K)(g) = 1$. To make the function continuous, we must have
    \begin{equation}
      C_{\mathcal{G}} = f_{\mathcal{G}}(1 - \sigma) = \cos\left( \beta_{\mathcal{G}}(1 - \sigma) - \left( \frac{\pi + 2\beta_{\mathcal{G}}}{4} \right) \right).
    \label{FirstScalingOfConstant}
  \end{equation}
  We have a piecewise function given by 
  \begin{equation}
    \widetilde{g}_{\mathcal{G}}(x) = 
    \begin{cases}
      C_{\mathcal{G}} \quad &|x| \le 1 - \sigma \\
      f_{\mathcal{G}}(|x|) \quad &1 - \sigma < |x| \le \sigma \\
      0 &\left \vert x \right \vert > \sigma, 
    \end{cases}
    \label{UnscaledPiecewiseFunctionMedSmall}
  \end{equation}
  where $(I + K)(\widetilde{g}_{\mathcal{G}}) = \gamma_{\mathcal{G}}$ for some $\gamma_{\mathcal{G}} \in \R$. In each instance, this $\gamma_{\mathcal{G}}$ is nonzero. We compute it by calculating $(I + K)(\widetilde{g}_{\mathcal{G}})(0)$, given by  
\begin{equation}
	(\alpha_{\mathcal{G}} + \beta_{\mathcal{G}}) \int_{0}^{\sigma} \widetilde{g}_{\mathcal{G}}(y) dy + \widetilde{g}_{\mathcal{G}}(0) 
  \label{SecondScalingEqn}.
\end{equation}
For SO(Even), (\ref{SecondScalingEqn}) evaluates to 
\begin{equation}
  \beta_{\textrm{SO(Even)}} = (1 - \sigma) \cos\left( \frac{1 - \sigma}{2} - \left( \frac{\pi + 1}{4} \right) \right) + 2\left( \sin \left( \frac{\sigma}{2} - \left( \frac{\pi + 1}{4}  \right)\right) + \sin\left( \frac{\sigma}{2} + \frac{\pi - 1}{4} \right) \right). 
  \label{SOEvenFinalConstant}
\end{equation}
For SO(Odd), (\ref{SecondScalingEqn}) evaluates to 
\begin{equation}
  \beta_{\textrm{SO(Odd)}} = 3(1 - \sigma)\cos\left( \frac{\sigma - 1}{2} - \left( \frac{\pi - 1}{4} \right) \right) + 2\sin\left( \frac{\sigma}{2} + \frac{\pi - 1}{4} \right) + 2\sin\left( \frac{\sigma}{2} - \left( \frac{\pi + 1}{4} \right) \right).
  \label{SOOddFinalConstant}
\end{equation}
For the symplectic group, (\ref{SecondScalingEqn}) evaluates to 
\begin{equation}
  \beta_{\textrm{Sp}} = (\sigma - 1)\cos\left( \frac{\sigma - 1}{2} - \left( \frac{\pi - 1}{4} \right) \right) 2\sin\left( \frac{\sigma}{2} + \frac{\pi - 1}{4} \right) + 2\sin\left( \frac{\sigma}{2} - \left( \frac{\pi + 1}{4} \right) \right).
   \label{SymplecticFinalConstant}
\end{equation}
\end{proof}

\newpage

\section{Extension to $\textrm{support}(\hat{\phi}) \subseteq [-3,3]$}\label{-33section}
For each $\mathcal{G}$ and for $1 < \sigma < 1.5$, we find $g$ such that $g \ast \check{g} = \hat{\phi}$. Using the fact that $g$ must be even, we will explicitly solve for $g(x)$, where $x \ge 0$, and take the even extension of that function as our solution. \\


The main result of this section is the following. 
\begin{theorem}\label{thm:mainresult} Let $\phi$ be an even, nonnegative Schwartz test function such that $\textrm{supp}(\hat{\phi}) \subset [-2\sigma, 2\sigma]$. Then for $1 < \sigma < 1.5$ (or $2 < 2\sigma < 3$) the test function which minimizes \eqref{Minimize} is given by  $\widehat{\phi} = g \ast \check{g}$. Here $\ast$ represents convolution, $\check{g}(x) = \overline{g(-x)}$, and $g$ is given by
\begin{equation}\label{SOEvenExplicitOptimalFunction}
    g_{{\rm SO(even)},\sigma}(x) \ = \  \lambda_{{\rm SO(even)},\sigma}
    \begin{cases}
      c_{1,\mathcal{G},\sigma} \cos\left( \frac{|x|}{\sqrt{2}} \right) \ &|x| \le \sigma - 1 \\
      \cos\left( \frac{|x|}{2} - \frac{(\pi + 1)}{4} \right) \ &\sigma - 1 \le |x| \le 2 - \sigma\\
      \frac{c_{1,\mathcal{G},\sigma}}{\sqrt{2}} \sin\left( \frac{|x|-1}{\sqrt{2}} \right)  + c_{3,\mathcal{G},\sigma} \ &2 - \sigma < |x| < \sigma \\
      0 &|x| \ge \sigma,
  \end{cases}
      \end{equation}
      and
    \begin{equation}\label{OrthogOptimalFunction}
g_{{\rm O},\sigma}(x) \ = \
\begin{cases}
 \frac{1}{1 + \sigma} \ &|x| < \sigma \\
 0 \ &|x| \ge \sigma
\end{cases}
    \end{equation}
for $\mathcal{G} \ = \  {\rm O}$, and
  \begin{equation}\label{SOOdd/SpExplicitOptimalFunction}
    g_{\mathcal{G},\sigma}(x) \ = \  \lambda_{\mathcal{G},\sigma}
    \begin{cases}
      c_{1,\mathcal{G},\sigma} \cos\left( \frac{|x|}{\sqrt{2}} \right) \ &|x| \le \sigma - 1 \\
      \cos\left( \frac{|x|}{2} + \frac{(\pi - 1)}{4} \right) \ &\sigma - 1 \le |x| \le 2 - \sigma\\
      \frac{-c_{1,\mathcal{G},\sigma}}{\sqrt{2}} \sin\left( \frac{|x|-1}{\sqrt{2}} \right) + c_{3,\mathcal{G},\sigma} \ &2 - \sigma < |x| < \sigma \\	
      0 & |x| \ge \sigma
  \end{cases}
  \end{equation}
 for $\mathcal{G} =  {\rm SO(odd)}$ or ${\rm Sp}$. Here, the $c_{i,\mathcal{G},\sigma}$ and $\lambda_{\mathcal{G},\sigma}$ are easily explicitly computed, and are given later in \eqref{ActualCoefficientsSOEven}, \eqref{ActualCoeffSOOdd/Sp}, \eqref{SO(even)Scaling}, \eqref{SpScaling} and \eqref{SOOddScaling}.
\end{theorem}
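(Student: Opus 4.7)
The idea is to reduce the integral equation $(I+K)g = 1$ to a system of ODEs on three natural subintervals of $[0,\sigma]$, solve them in closed form, and then stitch the pieces together using continuity and an overall scaling. The orthogonal case is handled by Section~\ref{orthogsec}, so I focus on $\mathcal{G} \in \{\mathrm{SO(even)}, \mathrm{SO(odd)}, \mathrm{Sp}\}$.

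\emph{Step 1 (Setup).} For $1 < \sigma < 3/2$, I would begin by using the evenness of $g$ (Lemma~\ref{evenlemma}) to rewrite \eqref{eq:0} for $x \ge 0$ as
\[
g(x) + \int_0^\sigma \bigl[m_{\mathcal{G}}(x-y) + m_{\mathcal{G}}(x+y)\bigr] g(y)\,dy \ = \ 1.
\]
Since $m_{\mathcal{G}}$ decomposes as $\alpha_{\mathcal{G}} + \beta_{\mathcal{G}} I_{[-1,1]}$, I then track the effective supports of $I_{[-1,1]}(x-y)$ and $I_{[-1,1]}(x+y)$ as $x$ varies over $[0,\sigma]$. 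This produces exactly three regimes: the \emph{inner} region $[0,\sigma-1]$, the \emph{middle} region $[\sigma-1,2-\sigma]$, and the \emph{outer} region $[2-\sigma,\sigma]$ (with a further subdivision at $x=1$ inside the outer region that, as shown below, does not affect the ODE).

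\emph{Step 2 (Differentiation).} Applying Leibniz's rule (Theorem~\ref{LiebnizRuleTheorem}) in each region produces a first-order relation between $g$ and its values at reflected arguments. In the middle region the relation collapses to $g'(x) = \beta_{\mathcal{G}} g(1-x)$, identical to the $[-1,1]$ situation in Lemma~\ref{UniqueOneParameterFamily}, yielding the decoupled second-order ODE $g'' + g/4 = 0$. In the inner region differentiation gives $g'(x) = \beta_{\mathcal{G}}[g(1-x) - g(x+1)]$, coupling $g$ on $[0,\sigma-1]$ to its values on the outer region. In the outer region (using evenness to merge the sub-cases $x<1$ and $x>1$) the derivative collapses to $g'(x) = \beta_{\mathcal{G}} g(x-1)$, coupling back to the inner region. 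A second differentiation decouples the system: the inner function satisfies $g'' + g/2 = 0$, and the outer function satisfies an inhomogeneous linear equation driven only by $g_{\text{outer}}(2-x)$.

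\emph{Step 3 (Solve and match).} In the middle region the solution is already pinned down by Theorem~\ref{TheThree-1,1Solutions}: up to scaling it is $\cos(|x|/2 - (\pi+1)/4)$ for $\mathrm{SO(even)}$ and $\cos(|x|/2 + (\pi-1)/4)$ for $\mathrm{SO(odd)}$ and $\mathrm{Sp}$. In the inner region, evenness of $g$ forces the solution of $g''+g/2=0$ to be $c_1 \cos(|x|/\sqrt{2})$. In the outer region, substituting the inner form into $g'_{\text{outer}}(x) = \beta_{\mathcal{G}}\, g_{\text{inner}}(x-1)$ and integrating produces $(\pm c_1/\sqrt{2}) \sin((|x|-1)/\sqrt{2}) + c_3$, with the sign of $c_1$ governed by $\beta_{\mathcal{G}}$. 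The two unknowns $c_1, c_3$ are then determined by imposing continuity at the breakpoints $x = \sigma-1$ and $x = 2-\sigma$, which is guaranteed by Lemma~\ref{LipschitzLemma}. This yields the explicit constants $c_{1,\mathcal{G},\sigma}$ and $c_{3,\mathcal{G},\sigma}$.

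\emph{Step 4 (Scaling and uniqueness).} The above construction produces a $g$ which, by the chain of ODEs plus matching, satisfies $(I+K)g \equiv A$ for some constant $A$ on $[-\sigma,\sigma]$. Evaluating $(I+K)g$ at the single convenient point $x=0$ computes $A$ explicitly; dividing by $A$ gives the unique scaling $\lambda_{\mathcal{G},\sigma}$ making $(I+K)g = 1$, producing the formulae \eqref{SOEvenExplicitOptimalFunction} and \eqref{SOOdd/SpExplicitOptimalFunction}. Uniqueness then follows from the Fredholm alternative (Theorem~\ref{FredholmAlternative}) together with the positive-definiteness of $I+K$ (Corollary~\ref{HomogEqnOnlyTrivSoln}).

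\emph{Main obstacle.} The delicate step is the outer-region analysis in Step~2: for $x \in [2-\sigma,1]$ both indicators $I_{[-1,1]}(x \pm y)$ contribute to the integrand, while for $x \in (1,\sigma]$ only $I_{[-1,1]}(x-y)$ survives. One must verify that after differentiating, the resulting ODE is the same $g'(x) = \beta_{\mathcal{G}} g(x-1)$ in both sub-cases, which is exactly where evenness of $g$ silently does the work by converting $g(1-x)$ into $g(x-1)$ in the first sub-case. Carefully bookkeeping signs across the three groups, and inheriting the phase shift $\pm (\pi \pm 1)/4$ from Lemma~\ref{UniqueOneParameterFamily}, is the other source of tedium but involves no conceptual difficulty.
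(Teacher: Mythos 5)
Your proposal is correct and follows essentially the same route as the paper: differentiate the optimality criterion $(I+K)g=1$ to obtain location-specific delay differential equations on the three intervals $[0,\sigma-1]$, $[\sigma-1,2-\sigma]$, $[2-\sigma,\sigma]$, solve them (the middle piece inherited from Lemma \ref{UniqueOneParameterFamily}), glue by continuity, rescale so that $(I+K)g\equiv 1$, and invoke the Fredholm alternative plus Lemma \ref{OptimalCriterionAndComputationLemma} for uniqueness and optimality. The only deviations are minor: you kill the inner sine term by evenness up front (as the paper does later in Lemma \ref{c2alwayszero}) and fix $c_1,c_3$ by continuity at both breakpoints, whereas the paper keeps $c_2$ and solves the $3\times 3$ systems \eqref{SO(even)MatrixEqn}--\eqref{SO(odd)AndSpMatrixEqn}; you should just state explicitly that the constant $A$ with $(I+K)\widetilde{g}\equiv A$ is nonzero (by positive-definiteness of $I+K$ and $\widetilde{g}\not\equiv 0$) before dividing by it.
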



Moreover, the optimal function $g_{\mathcal{G},\sigma}$, along with its coefficients $c_{i,\mathcal{G},\sigma}$ and its scaling factor $\lambda_{\mathcal{G},\sigma}$, all depend on $\sigma$ and $\mathcal{G}$. As this will be clear from equations \eqref{ActualCoefficientsSOEven} to \eqref{SOOddScaling}, to simplify the notation we omit the subscripts $\mathcal{G}$ and $\sigma$ when there is no danger of confusion.

To help illustrate the main Theorem, we include plots of the optimal $g$ for the groups SO(even), SO(odd), and Sp in Figure \ref{fig:optplots}, and the plots for the corresponding optimal $\phi$ in Figure \ref{fig:optphiplots}; we do not include the optimal plots for the orthogonal case, as the resulting $g$ is constant (and equal to $(1 + \sigma)^{-1}$). 

\begin{figure}[h]
\begin{center}
\scalebox{.5545}{\includegraphics{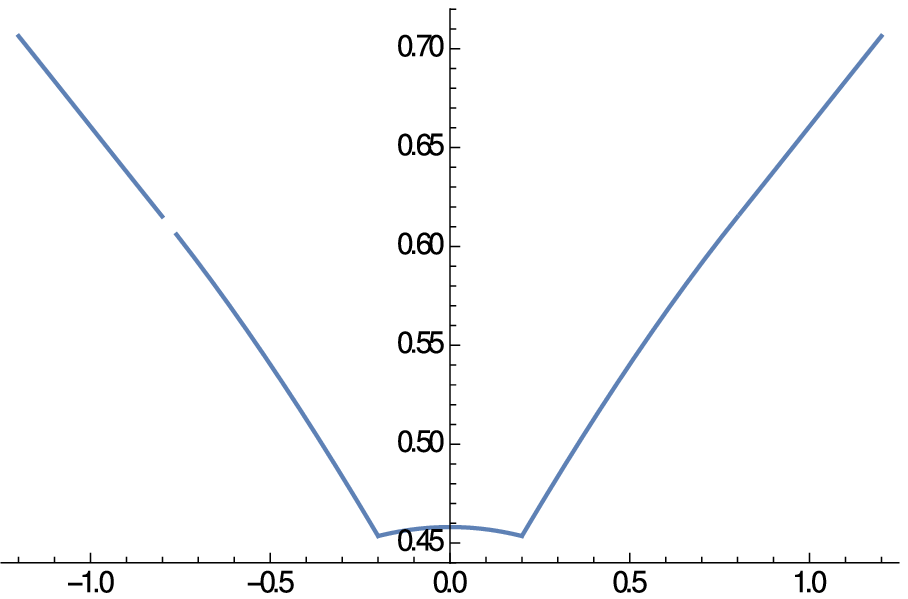}} \ \ \scalebox{.5545}{\includegraphics{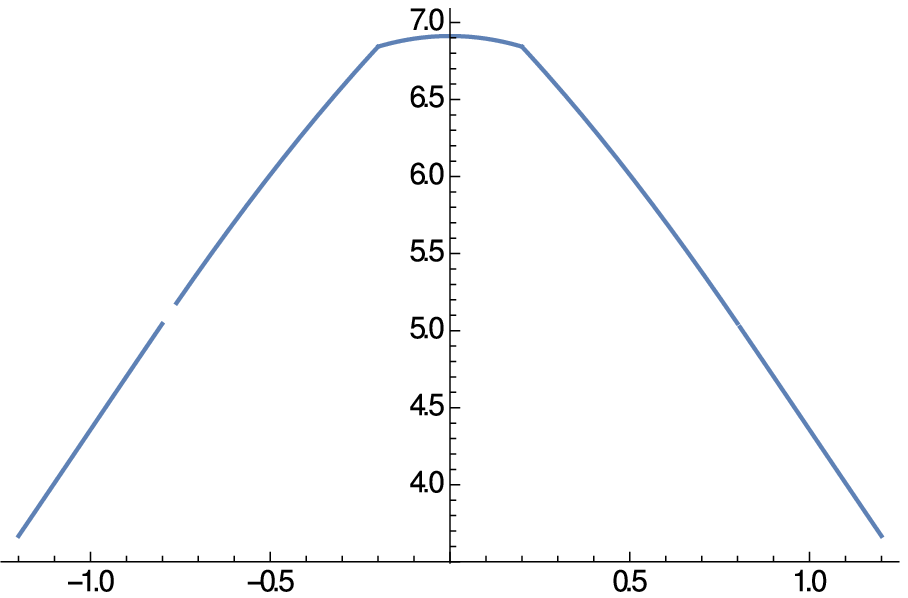}} \ \ \scalebox{.5545}{\includegraphics{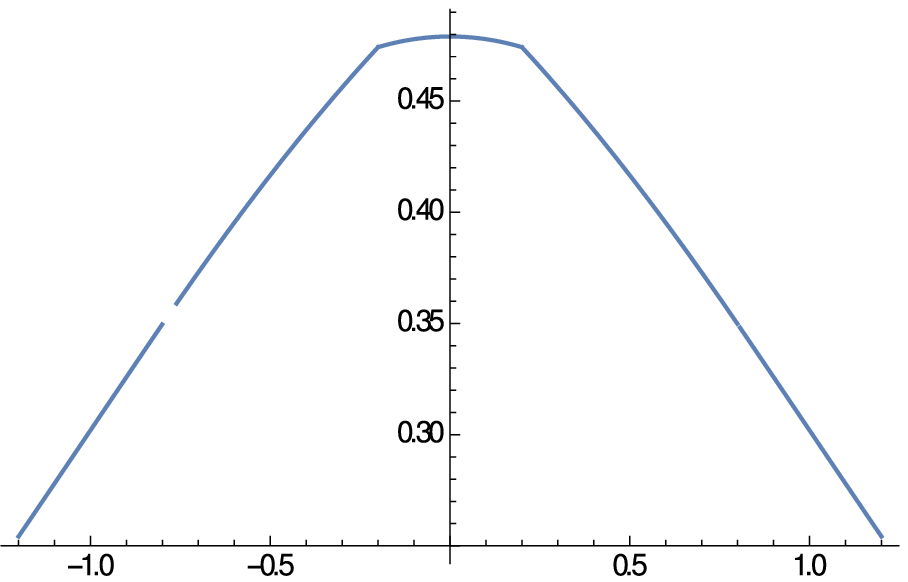}}
\caption{\label{fig:optplots} Plots of the optimal $g$ with $\sigma = 1.2$. Left: Optimal SO(even) function. Middle: Optimal Sp function. Right: Optimal SO(odd) function.}
\end{center}
\end{figure}

\begin{figure}[h]
\begin{center}
\scalebox{.5545}{\includegraphics{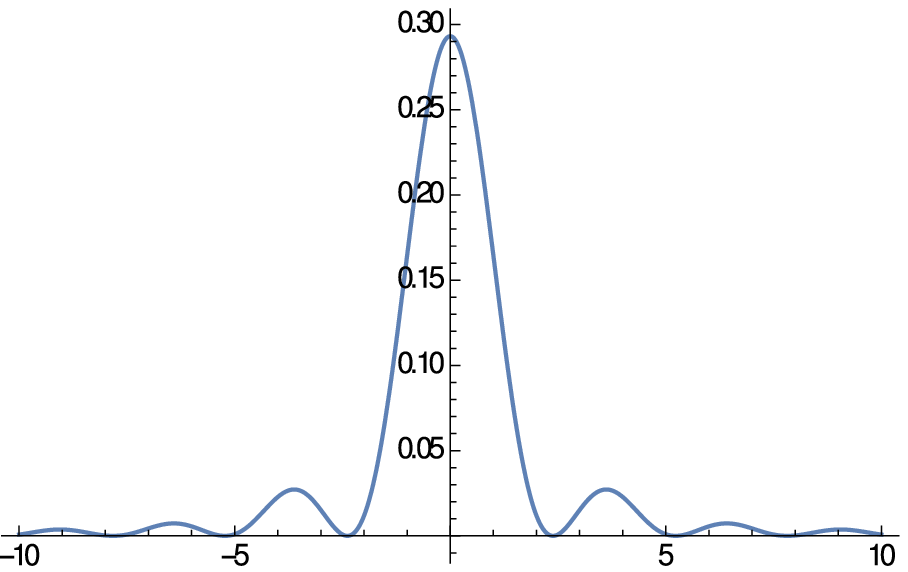}} \ \ \scalebox{.5545}{\includegraphics{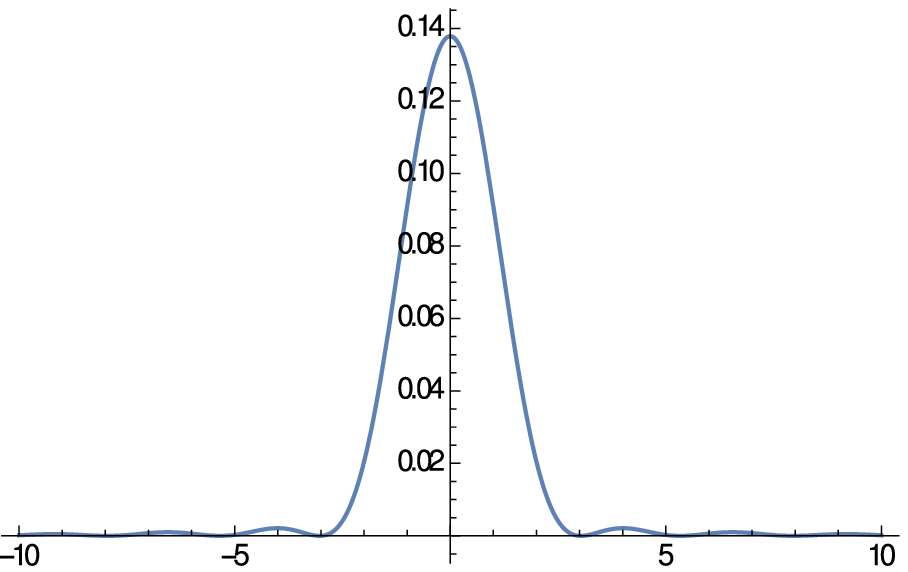}} \ \ \scalebox{.5545}{\includegraphics{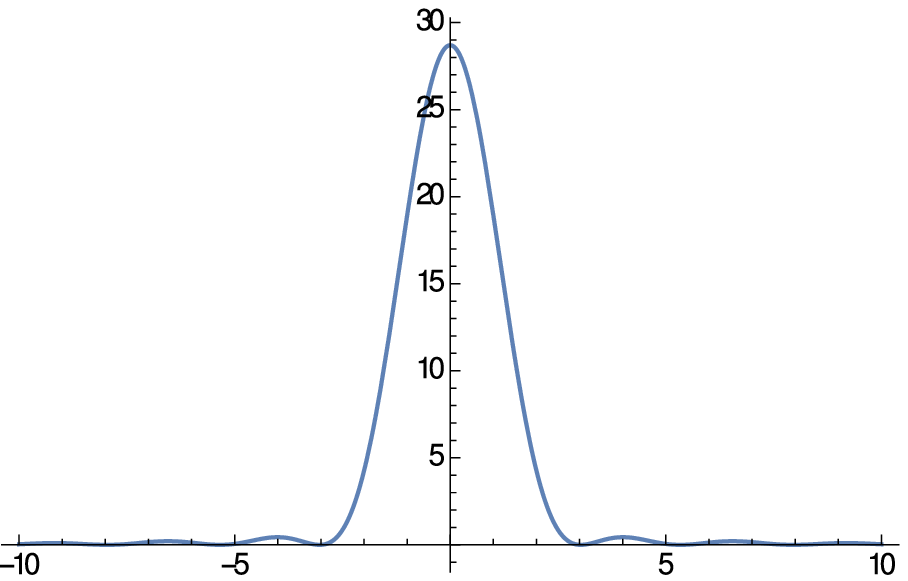}}
\caption{\label{fig:optphiplots} Plots of the optimal $\phi$ with $\sigma = 1.2$ . Left: Optimal SO(even) function. Middle: Optimal Sp function. Right: Optimal SO(odd) function.  }
\end{center}
\end{figure}

The most immediate application of these results is the upper bound on average rank described in \eqref{AvgRankBound}. However, at present it is not verified that these bounds apply to any family of $L$-functions. The largest 1-level density support occurs in families of cuspidal newforms \cite{ILS} and Dirichlet $L$-functions \cite{FiM} (though see also \cite{AM} for Maass forms), where we can take $2\sigma < 2$. It is possible to obtain better bounds on vanishing by using the 2 or higher level densities, though as remarked above in practice the reduced support means these results are not better than the 1-level for extra vanishing at the central point but do improve as we ask for more and more vanishing (see \cite{HM,FrM}). Yet, it is conjectured that support can be extended in some cases. For example, Hypothesis $S$ implies that the one-level density conjecture holds for orthogonal families with $\textrm{support}(\hat{\phi}) \subset \left( -\frac{22}{9}, \frac{22}{9} \right)$. This computation shows the precise benefit of such efforts with respect to bounding average rank.  
\begin{corollary}
  \label{cor:averankmainresult} Let $\mathcal{F}$ be a family of $L$-functions such that, in the limit as the conductors tend to infinity, the 1-level density is known to agree with the scaling limit of unitary, symplectic or orthogonal matrices. Then for every $\varepsilon > 0$ in the limit the average rank is bounded above by $\varepsilon + $
\begin{equation}
  \begin{cases}
      \frac{4 \sqrt{2} \sin \left(\frac{1}{4} (3-2 \sigma)\right)+2 (\sigma-1) \sin \left(\frac{1}{4} (-2 \sigma+\pi +3)\right)+\sin \left(\frac{1}{4} (2 \sigma+\pi -3)\right) \left(\sqrt{2} (\sigma+1) \tan \left(\frac{\sigma-1}{\sqrt{2}}\right)+2\right)}{8 \sqrt{2} \sin \left(\frac{1}{4} (3-2 \sigma)\right)+8 (\sigma-1) \sin \left(\frac{1}{4} (-2 \sigma+\pi +3)\right)+4 \sqrt{2} \sigma \sin \left(\frac{1}{4} (2 \sigma+\pi -3)\right) \tan \left(\frac{\sigma-1}{\sqrt{2}}\right)}  &\mathcal{G} \ = \ {\rm SO(even)} \\
      \frac{-2 (\sigma-1) \sin \left(\frac{1}{4} (2 \sigma+\pi -3)\right)-4 \sqrt{2} \sin \left(\frac{1}{4} (3-2 \sigma)\right)+\sin \left(\frac{1}{4} (-2 \sigma+\pi +3)\right) \left(\sqrt{2} (\sigma-3) \tan \left(\frac{\sigma-1}{\sqrt{2}}\right)+2\right)}{8 (\sigma-1) \sin \left(\frac{1}{4} (2 \sigma+\pi -3)\right)+8 \sqrt{2} \sin \left(\frac{1}{4} (3-2 \sigma)\right)-4 \sqrt{2} (\sigma-2) \sin \left(\frac{1}{4} (-2 \sigma+\pi +3)\right) \tan \left(\frac{\sigma-1}{\sqrt{2}}\right)}  &\mathcal{G} \ = \ {\rm Sp} \\
      \frac{6 (\sigma-1) \sin \left(\frac{1}{4} (2 \sigma+\pi -3)\right)+4 \sqrt{2} \sin \left(\frac{1}{4} (3-2 \sigma)\right)+\sin \left(\frac{1}{4} (-2 \sigma+\pi +3)\right) \left(\sqrt{2} (5-3 \sigma) \tan \left(\frac{\sigma-1}{\sqrt{2}}\right)+2\right)}{8 (\sigma-1) \sin \left(\frac{1}{4} (2 \sigma+\pi -3)\right)+8 \sqrt{2} \sin \left(\frac{1}{4} (3-2 \sigma)\right)-4 \sqrt{2} (\sigma-2) \sin \left(\frac{1}{4} (-2 \sigma+\pi +3)\right) \tan \left(\frac{\sigma-1}{\sqrt{2}}\right)}  &\mathcal{G} \ = \ {\rm SO(odd)} \\
    \frac{1}{2\sigma} + \frac{1}{2} &\mathcal{G} \ = \ {\rm O}.
  \end{cases}
  \label{AvgRankBoundsForExtendedSupport}
\end{equation}
for $1 < \sigma < 1.5$. 
\end{corollary}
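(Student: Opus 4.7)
The plan is to combine three ingredients already assembled in the paper: the average-rank inequality \eqref{AvgRankBound}, the closed-form evaluation of the infimum from Lemma \ref{OptimalCriterionAndComputationLemma}, and the explicit optimal functions $g_{\mathcal{G},\sigma}$ from Theorem \ref{thm:mainresult}. Specifically, \eqref{AvgRankBound} tells us that for every $\varepsilon > 0$ and $Q$ large the average rank is bounded above by $g + \varepsilon$, where $g = \int \widehat{\phi}(y)\widehat{W}(\mathscr{F})(y)\,dy$; minimizing over admissible $\phi$ and invoking \eqref{InfComputationByg} replaces this by $\frac{1}{\langle 1, g_{\mathcal{G},\sigma}\rangle} + \varepsilon$, where $g_{\mathcal{G},\sigma}$ is the unique solution to $(I+K)g = 1$ on $[-\sigma,\sigma]$. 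So the corollary reduces to computing $\langle 1, g_{\mathcal{G},\sigma}\rangle = \int_{-\sigma}^{\sigma} g_{\mathcal{G},\sigma}(x)\,dx$ for each symmetry group and checking that the reciprocal matches the displayed expression.

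For the orthogonal case this is immediate: $g_{\mathrm{O},\sigma}$ is the constant $\tfrac{1}{1+\sigma}$ on $[-\sigma,\sigma]$, so $\langle 1, g\rangle = \tfrac{2\sigma}{1+\sigma}$ and its reciprocal is $\tfrac{1}{2\sigma}+\tfrac{1}{2}$, which is exactly the fourth case of \eqref{AvgRankBoundsForExtendedSupport}. For $\mathcal{G} \in \{\mathrm{SO(even)},\mathrm{SO(odd)},\mathrm{Sp}\}$ I would exploit that $g_{\mathcal{G},\sigma}$ is even and break the integral as $2\lambda_{\mathcal{G},\sigma}$ times the sum of three integrals over $[0,\sigma-1]$, $[\sigma-1,2-\sigma]$, and $[2-\sigma,\sigma]$, matching the three pieces of \eqref{SOEvenExplicitOptimalFunction} and \eqref{SOOdd/SpExplicitOptimalFunction}. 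The first piece integrates $c_{1} \cos(x/\sqrt{2})$, giving $\sqrt{2}\,c_{1}\sin\!\bigl(\tfrac{\sigma-1}{\sqrt{2}}\bigr)$; the middle piece integrates a shifted cosine of argument $x/2$, yielding differences of sines at $\tfrac{\sigma-1}{2}$ and $\tfrac{2-\sigma}{2}$ (modulated by the phase $\pm(\pi\mp 1)/4$); the third piece integrates $\tfrac{\pm c_{1}}{\sqrt{2}}\sin\!\bigl(\tfrac{x-1}{\sqrt{2}}\bigr) + c_{3}$, which gives a cosine term at the endpoints plus a linear contribution $(2\sigma-2)c_{3}$.

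The main bookkeeping step — and the only real obstacle — is to substitute the explicit values of the coefficients $c_{1,\mathcal{G},\sigma}$, $c_{3,\mathcal{G},\sigma}$ from \eqref{ActualCoefficientsSOEven}/\eqref{ActualCoeffSOOdd/Sp} and the scaling constants $\lambda_{\mathcal{G},\sigma}$ from \eqref{SO(even)Scaling}, \eqref{SpScaling}, \eqref{SOOddScaling}, and then collect the trigonometric terms over a common denominator. The algebra simplifies greatly once one notes that $c_{1}$ is proportional to $\sec\bigl(\tfrac{\sigma-1}{\sqrt{2}}\bigr)$ and $c_{3}$ involves $\tan\bigl(\tfrac{\sigma-1}{\sqrt{2}}\bigr)$, so a factor $\sec\bigl(\tfrac{\sigma-1}{\sqrt{2}}\bigr)$ can be pulled out of numerator and denominator before inverting; what remains are precisely the $\tan\bigl(\tfrac{\sigma-1}{\sqrt{2}}\bigr)$ and $\sin\bigl(\tfrac{\pm 2\sigma \pm \pi \mp 3}{4}\bigr)$ terms that appear in the first three cases of \eqref{AvgRankBoundsForExtendedSupport}.

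Finally, since Theorem \ref{thm:mainresult} guarantees $g_{\mathcal{G},\sigma}$ is the unique optimizer, the inequality $\frac{1}{\langle 1, g_{\mathcal{G},\sigma}\rangle}$ is sharp, so no improvement is possible within the class of admissible test functions of support $[-2\sigma,2\sigma]$; the $\varepsilon$ in the corollary accounts solely for the $Q \to \infty$ passage in \eqref{AvgRankBound}. I expect the entire argument to be essentially a single explicit integration followed by trigonometric simplification, with the verification of uniqueness and positivity already handled upstream by Lemma \ref{OptimalCriterionAndComputationLemma} and Corollary \ref{HomogEqnOnlyTrivSoln}.
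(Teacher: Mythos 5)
Your proposal is correct and follows essentially the same route as the paper: the paper obtains the corollary by combining the average-rank bound \eqref{AvgRankBound} with the infimum formula $\inf(\mathcal{G},\sigma) = \bigl(\int_{-\sigma}^{\sigma} g(x)\,dx\bigr)^{-1}$ from Lemma \ref{OptimalCriterionAndComputationLemma} and then integrating the explicit optimal $g_{\mathcal{G},\sigma}$ of Theorem \ref{thm:mainresult} (with the coefficients \eqref{ActualCoefficientsSOEven}--\eqref{SOOddScaling}) piecewise over the three intervals, exactly as you outline. Your treatment of the orthogonal case and the reduction of the remaining cases to trigonometric bookkeeping match the paper's intended computation, so there is no gap.
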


\begin{remark}
  We only list $g$ and not the optimal test functions or their Fourier transforms above, as we do not need either function for the computation of the infimum. By Proposition \ref{OptimalCriterionAndComputationLemma}, the infimum is given by
\begin{equation}\label{infcomputationeqn}
  {\rm inf}(\mathcal{G},\sigma)\ =\ \left( \int_{-\sigma}^{\sigma} g(x) dx \right)^{-1}.  
\end{equation}
which is finite because of the requirement that $\phi(0) > 0$.  
\end{remark}

A natural choice of a test function one side of the Fourier pair
\begin{equation}
\phi(x) = \left( \frac{\sin(2 \sigma \pi x)}{2 \sigma \pi x} \right)^{2}, \quad \quad \widehat{\phi}(y) = \frac{1}{2\sigma}\left( 1 - \frac{|y|}{2\sigma} \right) \ \quad \textrm{if} \ |y| < 2 \sigma;
\label{NaiveFourierPair}
\end{equation}
this is the function used for the initial computation of average rank bounds in \cite{ILS} and are optimal for $\sigma = 1$. For the groups ${\rm SO(even)}, {\rm Sp}, {\rm SO(odd)}$, and for $1 < \sigma < 1.5$ the functions we find provide a modest improvement for the upper bounds on average rank over the pair \eqref{NaiveFourierPair}. We illustrate the improvement in Figure \ref{fig:comparisons}, which is much easier to process than \eqref{AvgRankBoundsForExtendedSupport}. 

\begin{figure}[h]
\begin{center}
\scalebox{.5545}{\includegraphics{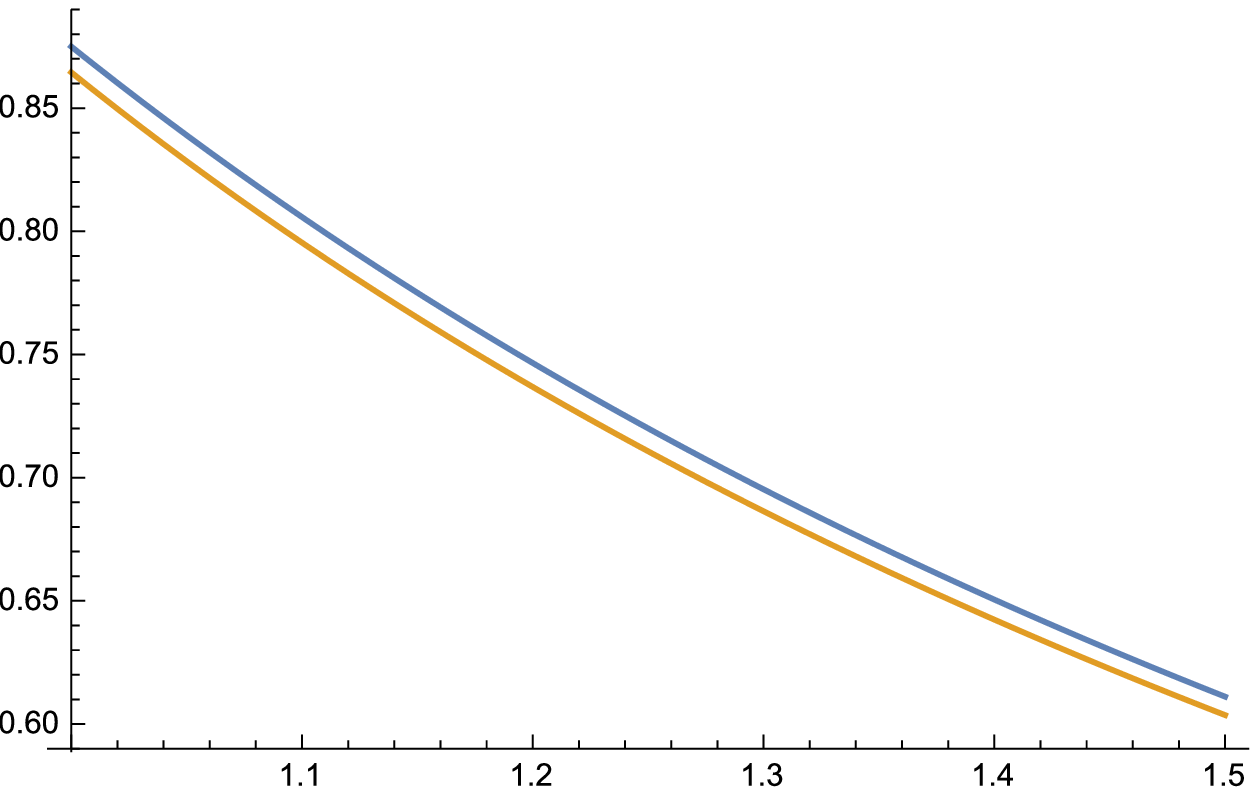}} \ \ \scalebox{.5545}{\includegraphics{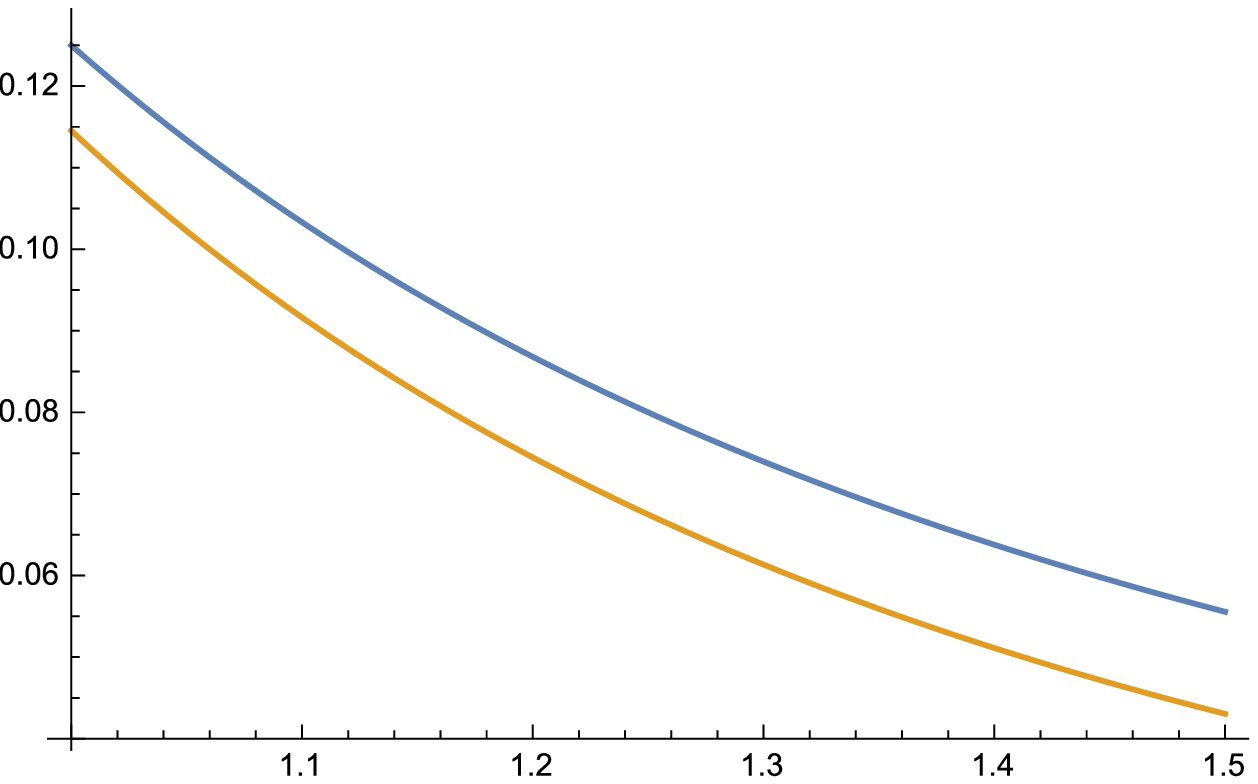}} \ \ \scalebox{.5545}{\includegraphics{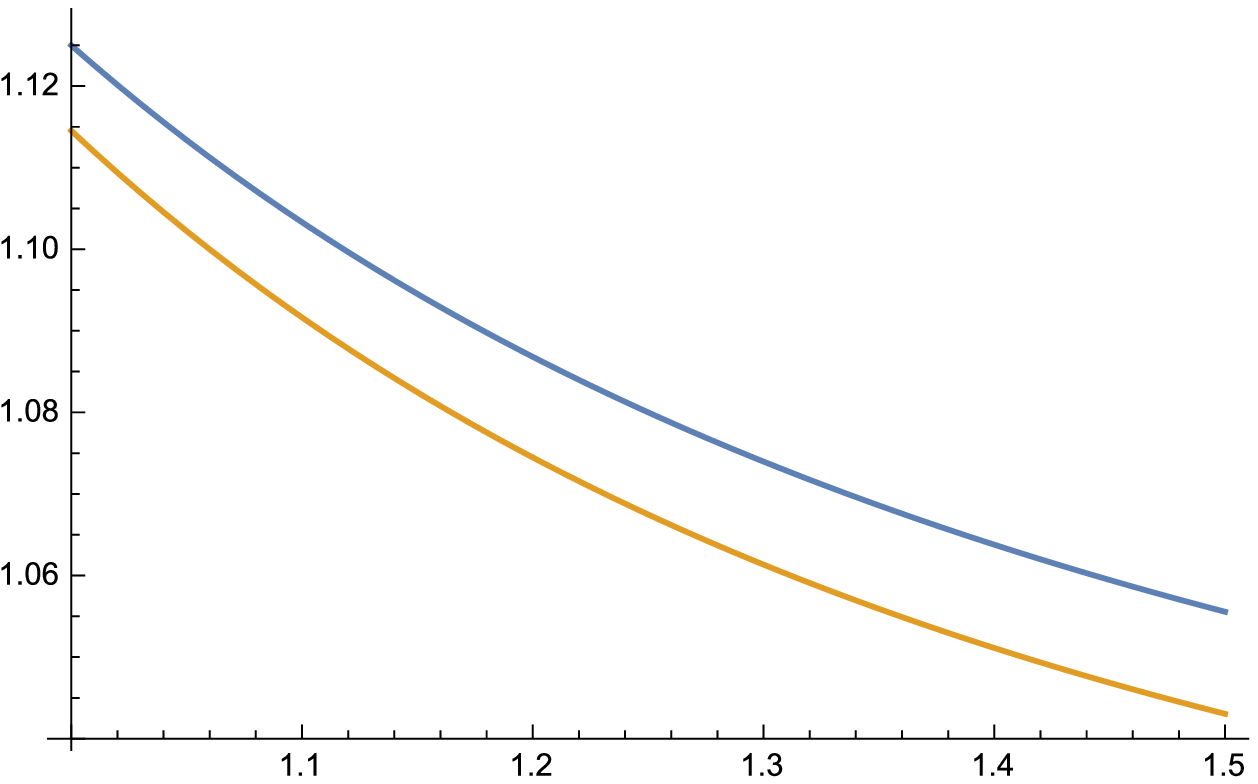}}
\caption{\label{fig:comparisons} Comparison of upper bounds. The larger bound is from using the sub-optimal naive guess \eqref{NaiveFourierPair}, the lower is from using our results from \eqref{AvgRankBoundsForExtendedSupport}. Left: $\mathcal{G}$ = SO(even). Middle: $\mathcal{G} = {\rm Sp}$. Right: $\mathcal{G}$ = SO(odd).}
\end{center}
\end{figure}

Finally, we will show a plot of the optimal $\phi$ for these groups, compared to the na\"{i}ve choice \eqref{NaiveFourierPair}. We do compute the optimal $\phi$ for each group, as the formulae are long, but can be isolated. We include them in Appendix \ref{approxoptappendix}. The following plot shows all four optimal $\phi$ for $\sigma = 1.2$, which corresponds to $\textrm{supp}(\hat{\phi}) \subset [-2.4,2.4]$

\begin{figure}[h]
\begin{center}
\scalebox{1.6}{\includegraphics{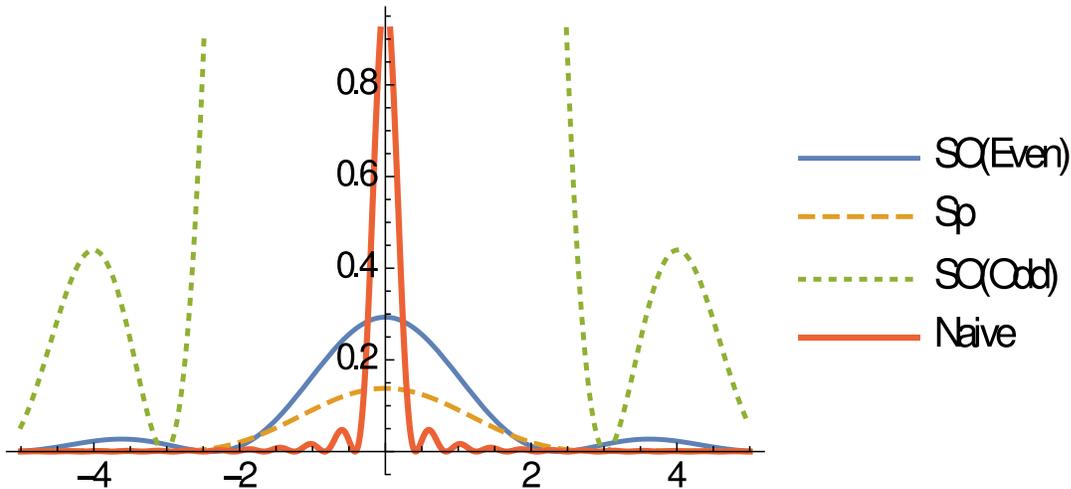}} 
\caption{\label{fig:comparisonfourphiplots} Comparison of the three optimal $\phi$ for SO(even), Sp and SO(odd), as well as the natural  choice from \eqref{NaiveFourierPair}, when $\sigma = 1.2 $.}
\end{center}
\end{figure}

The broad strategy of the proof of Theorem \ref{thm:mainresult} is to use an operator equation from \cite{ILS} to show (non-constructively) that for all $\sigma \in \R^{+}$, there exists a unique optimal test function with ${\rm supp}(\widehat{\phi}) \subseteq [-2\sigma,2\sigma]$ that minimizes the functional
\begin{equation}\label{OptimalityCriterion}
\frac{\int_{-\infty}^{\infty}\phi(x)W_{\mathcal{G}}(x) dx}{\phi(0)}.
\end{equation}
We find a collection of necessary conditions that leave us with precisely one choice for $\phi$.

More explicitly, our argument proceeds as follows.

\begin{enumerate}
  \item \label{firstgoal} We cite the results of Section \ref{OptCritSect}, which show that the optimality criterion \eqref{CriterionForOptimal} holds for all $\sigma \in \R^{+}$, where $\textrm{supp}(\hat{\phi}) \subset [-2\sigma,2\sigma]$.
  \item \label{thirdgoal} Our kernels give us a system of location-specific integral equations. Using the smoothness result of Proposition \ref{TwiceDiff}, we convert those to a system of location-specific delay differential equations, which hold almost everywhere. 
  \item \label{fourthgoal} We solve this sytem to find an $n$-parameter family in which our solution lives. To find this solution, we incorporate symmetries of $g$ -- namely that $g$ must be even. 
  \item \label{fifthgoal} Incorporating more necessary conditions on $g$, we reduce the family to a single candidate function -- by our existence result, the sole remaining candidate is our $g$, from which we may obtain the infimum and our optimal test function $\phi$.
\end{enumerate}

From the list above, we accomplish goal \ref{thirdgoal} in Subsection \ref{thirdgoalsec}, goal \ref{fourthgoal} in Subsection \ref{fourthgoalsec}, and goal \ref{fifthgoal} in Subsection \ref{fifthgoalsec}. Note that we have already found the optimal test functions for $\mathcal{G} = O$, for all levels of support, in Section \ref{orthogsec}.  

\subsection{A System of Integral Equations}

%
%
There are three intervals of importance in our study of this function. These are
\begin{equation}
	\begin{aligned}
	  I_{1} &\ := \  [0, \sigma - 1] \\
  J_{0} &\ := \  [\sigma - 1,2 - \sigma] \\
  I_{0} &\ := \  [2 - \sigma, \sigma].
	\end{aligned}
	\label{IntervalDefs}
\end{equation}
Our function will be defined piecewise, on each interval. \\

As $g$ is even, it suffices to find $g$ on $[0,\sigma]$, which means finding $g$ on all of the intervals above. Examining the kernels in \eqref{mfunctions} and the requirement \eqref{CriterionForOptimal}, we see that for $x \in I_{1}$, the optimal $g$ satisfies
\begin{equation}
  g(x) + \beta_{\mathcal{G}} \int_{0}^{x + 1} g(y) dy + \beta_{\mathcal{G}}\int_{0}^{1-x} g(y) dy + \alpha_{\mathcal{G}} \int_{0}^{\sigma} g(y) dy \ = \  1,
  \label{IntEqnOnI1}
\end{equation}
and for $x \in I_{0}$ or $J_{0}$, we have
\begin{equation}
  g(x) + \beta_{\mathcal{G}} \int_{0}^{1-x} g(y) dy + \alpha_{\mathcal{G}}\int_{0}^{\sigma} g(y) dy  \ = \  1.
  \label{IntEqnOnI23}
\end{equation}
In equations \eqref{IntEqnOnI1} and \eqref{IntEqnOnI23}, we note that $\alpha_{\mathcal{G}} = 0$ for $\mathcal{G} \not = {\rm SO(odd)}$ and $1$ for $\mathcal{G} = {\rm SO(odd)}$ and $\beta_{\mathcal{G}} = 1/2$ for $\mathcal{G} = {\rm SO(even)}$ and $-1/2$ for $\mathcal{G} = {\rm Sp}$ or $\mathcal{G} = {\rm SO(odd)}$.

\subsection{Conversion to Location-Specific System of Delay Differential Equations}\label{thirdgoalsec}
Lemma \ref{TwiceDiff} justifies differentiation of \eqref{IntEqnOnI1} and \eqref{IntEqnOnI23} under the integral signs, which gives the following system of location-specific delay differential equations:
\begin{align}
  g'(x) + \beta_{\mathcal{G}}g(x + 1) - \beta_{\mathcal{G}}g(x-1) \ = \  0
  \label{DelayDiff1}\\
  g'(x + 1) - \beta_{\mathcal{G}}g(x) \ = \  0
  \label{DelayDiff2} \\
  g'(x) \mp \beta_{\mathcal{G}} g(x \mp 1) \ = \ 0 \label{DelayDiff3}
\end{align}
where \eqref{DelayDiff1} holds for $x \in I_{1}$, \eqref{DelayDiff2} holds for $x+1 \in I_{1}$ or $J_{0}$, and $\eqref{DelayDiff3}$ holds for $x \in \pm I_0$.

\subsection{Solving The System}\label{fourthgoalsec}


\begin{lemma} The optimal $g$ satisfies
  \begin{equation}\label{OptimalgOnTwoIntervalsLemma} 
    g(x) \ = \
    \begin{cases}
     c_{1} \cos\left( \frac{x}{\sqrt{2}} \right) + c_{2} \sin \left( \frac{x}{\sqrt{2}} \right)  &\textrm{if} \ x \in I_{1} \\
     c_{1}\beta_{\mathcal{G}}\sqrt{2} \sin\left( \frac{x - 1}{\sqrt{2}} \right) - c_{2}\beta_{\mathcal{G}} \sqrt{2} \cos\left( \frac{x-1}{\sqrt{2}} \right) + c_{3} &\textrm{if} \ x \in I_{0}
    \end{cases}
   \end{equation}
  for some $c_{i} \in \R$.
\end{lemma}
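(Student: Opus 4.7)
The plan is to derive a second-order linear ODE for $g$ on $I_1$ from the system of delay differential equations, solve it to produce a two-parameter family, and then obtain the branch on $I_0$ by integrating the relevant DDE against the $I_1$ formula just found.

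First, for $x \in I_1 = [0, \sigma-1]$, the shifts $x+1 \in [1,\sigma]$ and $x-1 \in [-1, \sigma-2]$ lie in $I_0$ and $-I_0$ respectively, so equation \eqref{DelayDiff3} supplies
\[ g'(x+1) \ = \ \beta_{\mathcal{G}}\, g(x), \qquad g'(x-1) \ = \ -\beta_{\mathcal{G}}\, g(x). \]
Differentiating \eqref{DelayDiff1} and substituting these identities gives $g''(x) + 2\beta_{\mathcal{G}}^{2}\, g(x) = 0$; since $\beta_{\mathcal{G}}^{2} = 1/4$, this collapses to $g'' + \tfrac{1}{2}g = 0$. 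The general solution $c_{1}\cos(x/\sqrt{2}) + c_{2}\sin(x/\sqrt{2})$ is the first branch claimed in the lemma. (Note that I am using only the DDEs, which Lemma \ref{TwiceDiff} justifies differentiating almost everywhere, and then the solutions to the ODE are continuous, hence the identity extends to the closed interval.)

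For $x \in I_{0} = [2-\sigma, \sigma]$, equation \eqref{DelayDiff3} reads $g'(x) = \beta_{\mathcal{G}}\, g(x-1)$. On the sub-interval $[1,\sigma]$ one has $x-1 \in I_{1}$, so substituting the branch from the previous paragraph and integrating from $1$ to $x$, using $\int\cos(u/\sqrt{2})\,du = \sqrt{2}\sin(u/\sqrt{2})$ and $\int\sin(u/\sqrt{2})\,du = -\sqrt{2}\cos(u/\sqrt{2})$, produces the stated $I_{0}$ formula with $c_{3} := g(1) + c_{2}\beta_{\mathcal{G}}\sqrt{2}$ absorbing the constant of integration.

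The main obstacle is the remaining portion $[2-\sigma, 1] \subseteq I_{0}$, where $x-1 < 0$. There $g(x-1) = g(1-x)$ by evenness with $1-x \in I_{1}$, and a direct forward integration from $1$ flips the sign of the $c_{2}$-term, so a single uniform formula across all of $I_{0}$ does not fall out of the integration alone. To reconcile the two halves, I would differentiate $g'(x) = \beta_{\mathcal{G}}\, g(x-1)$ once more and reapply \eqref{DelayDiff1} to derive the functional differential equation $g''(x) = \tfrac{1}{4}(g(2-x) - g(x))$ on $I_{0}$, then verify by ansatz that the proposed three-parameter form is the unique solution compatible with the boundary data at $x = 1$ arising from the $I_{1}$ branch. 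This step forces the coefficients on the two halves of $I_{0}$ to align in the precise form written in the lemma; any residual constraint that is imposed on $c_{1}, c_{2}, c_{3}$ is deferred to the coefficient-matching analysis in Subsection \ref{fifthgoalsec}, where continuity across $J_{0}$ and the smoothness-a.e.\ conclusion of Lemma \ref{TwiceDiff} pin the parameters down.
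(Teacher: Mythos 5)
Your first two steps are fine: the derivation of $g'' + \tfrac{1}{2}g = 0$ on $I_{1}$ from \eqref{DelayDiff1} and \eqref{DelayDiff3}, and the integration of $g'(x) = \beta_{\mathcal{G}}g(x-1)$ on $[1,\sigma]$, are correct and give the lemma on $I_{1}$ and on the upper half of $I_{0}$. The gap is exactly where you flagged it, and your proposed repair does not close it. The reflection equation $g''(x) = \tfrac14\left(g(2-x) - g(x)\right)$ on $I_{0}$ is correct, but ``verify by ansatz'' cannot work as stated: substituting $A\sin\left(\frac{x-1}{\sqrt{2}}\right) + B\cos\left(\frac{x-1}{\sqrt{2}}\right) + C$ into it, the sine and constant terms cancel while the cosine term survives, so the equation is satisfied only when $B = 0$. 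Thus the three-parameter family is not closed under that equation, and the appeal to ``the unique solution compatible with the boundary data at $x=1$'' is unsupported -- this is a reflection-type functional-differential equation coupling $x$ and $2-x$, not an initial value problem, so no off-the-shelf uniqueness theorem applies. Deferring the ``residual constraint'' to the coefficient-matching in Subsection \ref{fifthgoalsec} is circular, since that analysis starts from the form asserted in this lemma. (If you do push your route through -- e.g.\ by splitting $g$ on $I_{0}$ into parts even and odd about $x=1$ -- you end up proving $c_{2} = 0$, which is stronger than the lemma; that is essentially the argument the paper only makes later, in Lemma \ref{c2alwayszero} for $1.5 < \sigma < 2$.)

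The paper's proof goes in the opposite direction and avoids the two-halves problem entirely: for $x \in I_{0}$ differentiate \eqref{DelayDiff3} twice and eliminate the shifted terms using \eqref{DelayDiff1} at $x-1 \in I_{1} \cup -I_{1}$ and \eqref{DelayDiff3} at $x-2 \in -I_{0}$, obtaining the single constant-coefficient ODE $g''' + 2\beta_{\mathcal{G}}^{2}g' = g''' + \tfrac12 g' = 0$ valid on all of $I_{0}$ (the displayed \eqref{FinalOutsideDiffEq} drops a factor of $2$; the frequency $1/\sqrt{2}$ in the lemma corresponds to $2\beta_{\mathcal{G}}^{2} = 1/2$). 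This yields the uniform three-parameter form on $I_{0}$ in one stroke; the $I_{1}$ branch and the coefficient linkage then follow by reading $g(x) = \beta_{\mathcal{G}}^{-1}g'(x+1)$ for $x \in I_{1}$, $x+1 \in [1,\sigma] \subset I_{0}$ (this outer-to-inner direction is what Theorem \ref{GOnWholeIntervalThm} later systematizes). The cleanest fix for your write-up is to import that third-order ODE on $I_{0}$: it gives the form $A\sin\left(\frac{x-1}{\sqrt{2}}\right) + B\cos\left(\frac{x-1}{\sqrt{2}}\right) + C$ on all of $I_{0}$, and comparing with your $[1,\sigma]$ computation on an interval of positive length (using linear independence of $\sin\left(\frac{x-1}{\sqrt{2}}\right)$, $\cos\left(\frac{x-1}{\sqrt{2}}\right)$, $1$) forces $A = c_{1}\beta_{\mathcal{G}}\sqrt{2}$, $B = -c_{2}\beta_{\mathcal{G}}\sqrt{2}$, which is precisely the linkage claimed in the lemma.
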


Before proving this lemma, it is important to note the following symmetry among our intervals. We first set some  notation. If $a$ is a number and $I$ is an interval, 
\begin{equation}
  a-I \ := \ \{x: x = a - y, y \in I\}. 
  \label{DefIntervalSubtraction}
\end{equation}  
Note that for the intervals defined in \eqref{IntervalDefs}, we have
\begin{equation}
  1 - I_0 \ = \ (I_{1} \cup -I_{1}) 
  \label{SymmetryI1I3}
\end{equation}
and
\begin{equation}
  1 - J_{0} \ = \  J_{0},
  \label{SymmetryI2I2}
\end{equation}
though we will not use this fact until later, in \eqref{DiffEqVersion2}. 

\begin{proof}
Let $x \in I_0$. Differentiating \eqref{DelayDiff3} yields
  \begin{equation}
    g'(x) - \beta_{\mathcal{G}}g'(x-1) \ = \  0.
    \label{DiffFirstEq}
  \end{equation}
Because of the symmetry \eqref{SymmetryI1I3}, we may use equation \eqref{DelayDiff1} on the $x-1$ term. This gives us the following equation: 
  \begin{equation}\label{ApplyingSecondEquation}
	  g''(x) + \beta_{\mathcal{G}}^{2}(g(x) - \beta_{\mathcal{G}}^2 g(x - 2)) \ = \ 0.
  \end{equation}

  Differentiate, and apply \eqref{DelayDiff3} to the $g(x-2)$ term to get 
  \begin{equation}
  g^{(3)}(x) + \beta^2_{\mathcal{G}} g'(x) - \beta^{-2}_{\mathcal{G}} g'(x-2) \ = \ 0 
  \label{sillylabel1}
  \end{equation}
  and then 
	\begin{equation}
		g^{(3)}(x) + \beta^{2}_{\mathcal{G}} g'(x) + \beta^{3} g'(x-1) \\
		\label{AlmostDoneLabel!}
	\end{equation}
	which, after applying \eqref{DelayDiff3} to the $x-1$ term, becomes
	\begin{equation}
		g^{(3)}(x) + \beta^{2}_{\mathcal{G}} g'(x) \ = \ 0 
		\label{FinalOutsideDiffEq}
	\end{equation}
	for $x \in I_0$. Since $\beta_{\mathcal{G}} = \pm \frac{1}{2}$, we have our result. 
\end{proof}

\ \\
The associated delay differential equation on $J_{0}$ is
\begin{equation}
  g'(x) - \beta_{\mathcal{G}}g(1-x) \ = \  0.
  \label{DiffEqVersion2}
\end{equation}
Due to symmetry \eqref{SymmetryI2I2}, when $x \in J_{0}, 1-x \in J_{0}$ as well. Recall from Lemma \ref{UniqueOneParameterFamily} that the solution in this context falls in the one-parameter family 
  \begin{equation}
    c_{1}\cos\left( \beta_{\mathcal{G}}x - \left( \frac{\pi + 2\beta_{\mathcal{G}}}{4} \right) \right)
    \label{OneParamFamEqnMiddlePart}
  \end{equation}
\subsection{Finding Coefficients}\label{fifthgoalsec}
Substituting values for $\alpha_{i,\mathcal{G}}$ for $i=1,2$, we find
  \begin{equation}\label{SOEvenExplicitOptimalFunction2}
    g_{{\rm SO(even)}}(x) \ = \  \lambda_{{\rm SO(even)}}
    \begin{cases}
      c_{1,\mathcal{G}} \cos\left( \frac{|x|}{\sqrt{2}} \right) + c_{2,\mathcal{G}} \sin\left( \frac{|x|}{\sqrt{2}} \right) \ &|x| \ \le\ \sigma - 1 \\
      \cos\left( \frac{|x|}{2} - \frac{(\pi + 1)}{4} \right) \ &\sigma - 1\ \le\ |x|\ \le\ 2 - \sigma\\
      \frac{c_{1,\mathcal{G}}}{\sqrt{2}} \sin\left( \frac{x-1}{\sqrt{2}} \right) - \frac{c_{2,\mathcal{G}}}{\sqrt{2}} \cos\left( \frac{x-1}{\sqrt{2}} \right) + c_{3} \ &2 - \sigma\ <\ |x|\ \le\ \sigma	
    \end{cases}
      \end{equation}
  for $\mathcal{G} = {\rm SO(even)}$ and
  \begin{equation}\label{SOOdd/SpExplicitOptimalFunction2}
    g_{\mathcal{G}}(x) \ = \  \lambda_{\mathcal{G}}
    \begin{cases}
      c_{1,\mathcal{G}} \cos\left( \frac{|x|}{\sqrt{2}} \right) + c_{2,\mathcal{G}} \sin\left( \frac{|x|}{\sqrt{2}} \right)\ &|x|\ \le\ \sigma - 1 \\
      \cos\left( \frac{|x|}{2} + \frac{(\pi - 1)}{4} \right) \ &\sigma - 1\ \le\ |x|\ \le\ 2 - \sigma\\
      \frac{-c_{1,\mathcal{G}}}{\sqrt{2}} \sin\left( \frac{x-1}{\sqrt{2}} \right) + \frac{c_{2,\mathcal{G}}}{\sqrt{2}} \cos\left( \frac{x-1}{\sqrt{2}} \right) + c_{3}\ &2 - \sigma\ <\ |x|\ \le\ \sigma	
    \end{cases}
      \end{equation}
  for $\mathcal{G} = {\rm SO(odd)}$ or Sp.

  \begin{lemma}\label{OptimalCoeffExist}
    There exist unique, computable coefficients $c_{i, \mathcal{G}}, \lambda_{\mathcal{G}}$ (for $i = 1,2,3$) so that the functions \eqref{SOEvenExplicitOptimalFunction2} and \eqref{SOOdd/SpExplicitOptimalFunction2} satisfy $(I + K)(g) = 1$ and are thus optimal.
  \end{lemma}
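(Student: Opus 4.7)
The plan is to leverage the uniqueness guaranteed by the Fredholm alternative (Theorem \ref{FredholmAlternative}, Corollary \ref{HomogEqnOnlyTrivSoln}) together with the piecewise form derived in Subsections \ref{thirdgoalsec} and \ref{fourthgoalsec}. We already know that a unique $g \in L^{2}[-\sigma,\sigma]$ satisfies $(I+K)(g)=1$, and that any such $g$ must lie in the four-parameter family \eqref{SOEvenExplicitOptimalFunction2} or \eqref{SOOdd/SpExplicitOptimalFunction2}. So the task reduces to pinning down $c_{1,\mathcal{G}}, c_{2,\mathcal{G}}, c_{3,\mathcal{G}}, \lambda_{\mathcal{G}}$ via a small system of necessary linear conditions, and then verifying that these conditions are consistent (which, by Fredholm, they must be).

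First I would impose a smoothness condition at the origin. Since $g$ is even and Lipschitz continuous (Lemma \ref{LipschitzLemma}), the function $\sin(|x|/\sqrt{2})$, which has a corner at $x=0$, cannot appear: this forces $c_{2,\mathcal{G}}=0$. Next, I would impose continuity of $g$ at the two interior breakpoints $|x|=\sigma-1$ (the $I_1$--$J_0$ junction) and $|x|=2-\sigma$ (the $J_0$--$I_0$ junction), again justified by Lemma \ref{LipschitzLemma}. After factoring $\lambda_{\mathcal{G}}$ out of both sides, these two continuity equations form a $2\times 2$ linear system in $c_{1,\mathcal{G}}$ and $c_{3,\mathcal{G}}$ with coefficients that are trigonometric functions of $\sigma$. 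A short computation of the determinant, which reduces to a product of the form $\cos\!\bigl((\sigma-1)/\sqrt{2}\bigr)$ times a nonvanishing trigonometric expression on $1<\sigma<1.5$, shows the system has a unique solution.

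Finally, I would fix the overall scaling $\lambda_{\mathcal{G}}$ by evaluating $(I+K)(g)$ at a single convenient point, e.g.\ $x=0$, and setting the value to $1$; this is a single linear equation in $\lambda_{\mathcal{G}}$ and immediately yields the scaling constants \eqref{SO(even)Scaling}, \eqref{SpScaling}, \eqref{SOOddScaling}. The step that requires care, and which I expect to be the main obstacle, is justifying that $(I+K)(g)\equiv 1$ on all of $[-\sigma,\sigma]$ rather than merely at the chosen test point. Here I would argue as follows: the delay differential equations \eqref{DelayDiff1}--\eqref{DelayDiff3} were obtained by differentiating the integral equation, so a piecewise solution of these ODEs makes $(I+K)(g)$ locally constant on each of $I_1$, $J_0$, $I_0$; because $Kg$ is continuous whenever $g$ is bounded (the limits of integration vary continuously in $x$), and because our continuity conditions at the breakpoints make $g$ continuous, $(I+K)(g)$ is in fact continuous on $[-\sigma,\sigma]$, so the three locally constant values must all agree with the single value at our test point.

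The remaining work is then purely mechanical: explicit evaluation of the continuity conditions to extract $c_{1,\mathcal{G}}$ and $c_{3,\mathcal{G}}$ (producing \eqref{ActualCoefficientsSOEven} and \eqref{ActualCoeffSOOdd/Sp}), and a single integration over $[-\sigma,\sigma]$ to extract $\lambda_{\mathcal{G}}$. Because Fredholm provides \emph{a priori} existence and uniqueness of $g$, and our derivation from the ODEs shows $g$ must lie in the stated family, the resulting closed-form coefficients are the unique ones for which $(I+K)(g)=1$ holds, completing the proof.
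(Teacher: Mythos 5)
Your overall skeleton --- continuity conditions at the breakpoints, an argument that $(I+K)(g)$ is constant on $[-\sigma,\sigma]$, fixing $\lambda_{\mathcal{G}}$ by evaluating at $x=0$, and Fredholm uniqueness to conclude optimality --- is the same as the paper's, which packages the conditions slightly differently: it imposes continuity at $\sigma-1$ together with the point equalities in \eqref{Req3:LeftAndRightScalarsAgree}, solves the resulting $3\times 3$ systems (whose determinant \eqref{The3x3Determinants} is $\cos\bigl(\frac{\sigma-1}{\sqrt{2}}\bigr)\sin\bigl(\frac{\sigma-1}{\sqrt{2}}\bigr)\neq 0$ for $1<\sigma<1.5$), and $c_{2,\mathcal{G}}=0$ \emph{emerges} as part of the solution \eqref{ActualCoefficientsSOEven}, \eqref{ActualCoeffSOOdd/Sp} rather than being imposed in advance.

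The genuine gap is your justification that $c_{2,\mathcal{G}}=0$. Lipschitz continuity (Lemma \ref{LipschitzLemma}) does not rule out a corner at the origin --- $|x|$ itself is Lipschitz --- and the smoothness results are only almost-everywhere statements; indeed Corollary \ref{NotManyBadPointsCor} explicitly allows $x=0$ as a point of non-differentiability. So ``even $+$ Lipschitz'' does not eliminate the $\sin(|x|/\sqrt{2})$ term. This gap is load-bearing, because your later claim that a piecewise solution of the ODEs automatically makes $(I+K)(g)$ locally constant is only true when the cross-interval delay relations hold: a direct computation shows that on $I_{1}$, and on the part of $I_{0}$ with $x<1$, the derivative of $(I+K)(g)$ for a member of the family reduces to a nonzero multiple of $c_{2,\mathcal{G}}$ times a sinusoid, so local constancy fails exactly when $c_{2,\mathcal{G}}\neq 0$. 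Two repairs are available: (i) note that for $|x|<\sigma-1$ the criterion reads $g(x)=1-\beta_{\mathcal{G}}\int_{x-1}^{x+1}g(y)\,dy-\alpha_{\mathcal{G}}\int_{-\sigma}^{\sigma}g(y)\,dy$, and since $g$ is continuous the middle term is $C^{1}$ in $x$ by the fundamental theorem of calculus, so $g$ is differentiable at $0$ and the corner is excluded; or (ii) keep $c_{2,\mathcal{G}}$ as an unknown in a $3\times 3$ system as the paper does (or argue via evenness and the delay equation, as the paper does for $1.5<\sigma<2$ in Lemma \ref{c2alwayszero}). With either repair, your route goes through and yields the paper's coefficients and scaling factors.
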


  \begin{proof}
    We use \eqref{CriterionForOptimal} and Lemma \ref{TwiceDiff} to find more necessary conditions on such a $g$. In particular, we impose the three restrictions:
    \begin{equation}
	    \begin{aligned}
	    \lim_{x \to (\sigma - 1)^{-}}g(x) &\ = \  \lim_{x \to (\sigma - 1)^{+}}g(x)  \\
    (I + K)(g)(0) &\ = \  (I + K)(g)(.5) \\
(I + K)(g)(0) &\ = \  (I + K)(g)(\sigma).
	    \end{aligned}
	  \label{Req3:LeftAndRightScalarsAgree}
    \end{equation}
  The first gives continuity, the second and third ensure that $(I + K)(g)$ is constant; however, they do not ensure that constant is 1. That is accomplished by scaling by $\lambda_{\mathcal{G}}$. This gives us the matrix equations
 \begin{equation}
  \begin{pmatrix}
    \cos\left( \frac{\sigma - 1}{\sqrt{2}} \right) & \sin\left( \frac{\sigma - 1}{\sqrt{2}} \right) & 0 \\
    \cos\left( \frac{\sigma - 1}{\sqrt{2}} \right) & 0 & 0 \\
 \frac{1}{\sqrt{2}}\sin\left( \frac{\sigma - 1}{\sqrt{2}} \right) + \cos\left( \frac{\sigma - 1}{\sqrt{2}} \right) & \sqrt{2} - \frac{1}{\sqrt{2}}\cos\left( \frac{\sigma -1}{\sqrt{2}} \right) & -1
  \end{pmatrix}
  \begin{pmatrix}
    c_1 \\
    c_2 \\
    c_3
  \end{pmatrix}
  \ = \
  \begin{pmatrix}
    g_{2}(\sigma - 1) \\
    g_{2}(\sigma - 1) \\
    g_{2}(\sigma - 1) - g_{2}(2 - \sigma)
  \end{pmatrix}
  \label{SO(even)MatrixEqn}
\end{equation}
for $\mathcal{G} = {\rm SO(even)}$ and
\begin{equation}
  \begin{pmatrix}
    \cos\left( \frac{\sigma - 1}{\sqrt{2}} \right) & \sin\left( \frac{\sigma - 1}{\sqrt{2}} \right) & 0 \\
    \cos\left( \frac{\sigma - 1}{\sqrt{2}} \right) & 0 & 0 \\
 \frac{-1}{\sqrt{2}}\sin\left( \frac{\sigma - 1}{\sqrt{2}} \right) + \cos\left( \frac{\sigma - 1}{\sqrt{2}} \right) & -\sqrt{2} + \frac{1}{\sqrt{2}}\cos\left( \frac{\sigma-1}{\sqrt{2}} \right) & -1
  \end{pmatrix}
  \begin{pmatrix}
    c_1 \\
    c_2 \\
    c_3
  \end{pmatrix}
  \ = \
\begin{pmatrix}
    g_{2}(\sigma - 1) \\
    g_{2}(\sigma - 1) \\
    g_{2}(\sigma - 1) - g_{2}(2 - \sigma)
  \end{pmatrix}
  \label{SO(odd)AndSpMatrixEqn}
\end{equation}
for $\mathcal{G} = {\rm SO(odd)}$ or ${\rm Sp}$.  Here, $g_{2}$ is $g$ restricted to $J_{0}$. \\

Expanding these matrices along the their third columns, we see that
\begin{equation}
  \left| A_{{\rm SO(even)}} \right| \ = \  \left| A_{\textrm{SO(odd)/Sp}} \right| \ = \  \cos\left( \frac{\sigma - 1}{\sqrt{2}} \right) \sin \left( \frac{\sigma - 1}{\sqrt{2}} \right), 
  \label{The3x3Determinants}
\end{equation}
which are both nonzero for $1 < \sigma < 1.5$. Solving the matrix equations, we obtain
\begin{eqnarray}
  c_{1,{\rm SO(even)}} &\ = \ &  \cos \left(\frac{\sigma-1}{2}+\frac{1}{4} (-1-\pi )\right) \sec \left(\frac{\sigma-1}{\sqrt{2}}\right) \notag  \\
  c_{2,{\rm SO(even)}} &\ = \ & 0 \notag\\
  c_{3,{\rm SO(even)}} &\ = \ & \sin \left(\frac{1}{4} (2 \sigma+3 \pi -3)\right)+\frac{\sin \left(\frac{1}{4} (-2 \sigma+3 \pi +3)\right) \tan \left(\frac{\sigma-1}{\sqrt{2}}\right)}{\sqrt{2}}, \notag \\
  \label{ActualCoefficientsSOEven}
\end{eqnarray}
and
\begin{eqnarray}
  c_{1,\mathcal{G}} &\ = \ & \cos \left(\frac{1-\sigma}{2}+\frac{1-\pi }{4}\right) \sec \left(\frac{\sigma-1}{\sqrt{2}}\right) \nonumber\\
  c_{2,\mathcal{G}} &\ = \ & 0 \notag \nonumber\\
  c_{3,\mathcal{G}} &\ = \ & \sin \left(\frac{1}{4} (-2 \sigma+3 \pi +3)\right)-\frac{\sin \left(\frac{1}{4} (2 \sigma+3 \pi -3)\right) \tan \left(\frac{\sigma-1}{\sqrt{2}}\right)}{\sqrt{2}}
  \label{ActualCoeffSOOdd/Sp}
\end{eqnarray}
for $\mathcal{G} = {\rm SO(odd)}$ or ${\rm Sp}$.

We currently have $(I + K)(\widetilde{g}) = c$ for some constant $c$. Here, $\widetilde{g}$ is the unscaled optimal function. As some of our $c_{i,\mathcal{G}}$ are nonzero and the operator $(I + K)$ is positive definite, this constant is nonzero and it can therefore be scaled to be one. We find the correct scaling factor by computing $((I + K)(\widetilde{g})(0))^{-1}$, setting that equal to $\lambda_{\mathcal{G}}$ in \eqref{SOEvenExplicitOptimalFunction2} and \eqref{SOOdd/SpExplicitOptimalFunction2}. From these computations, we find
  \begin{eqnarray}
    \lambda_{\mathcal{G}, \sigma} & \ = \  &\widetilde{g}_{\textrm{SO(Even)}}(0) + \frac{1}{2}\int_{-1}^{1} \widetilde{g}_{\textrm{SO(Even)}}(y) dy \nonumber\\
    &\ = \ &   2 \sqrt{2} \sin \left(\frac{1}{4} (3-2 \sigma)\right)+(\sigma-1) \sin \left(\frac{1}{4} (-2 \sigma+\pi +3)\right) \nonumber\\
    & & \ \ \ \ +\ \frac{1}{2} \sin \left(\frac{1}{4} (2 \sigma+\pi -3)\right) \left(\sqrt{2} (s+1) \tan \left(\frac{s-1}{\sqrt{2}}\right)+2\right) \label{SO(even)Scaling}
  \end{eqnarray}
  for $\mathcal{G} =   {\rm SO(even)}$, and
  \begin{eqnarray}
    \lambda_{\mathcal{G},\sigma} &\ = \ & \widetilde{g}_{Sp}(0) - \frac{1}{2} \int_{-1}^{1} \widetilde{g}_{Sp}(y) dy \nonumber\\
    &\ = \ &  -2 \sqrt{2} \sin \left(\frac{1}{4} (3-2 \sigma)\right)+(\sigma-1) \cos \left(\frac{1}{4} (2 \sigma+3 \pi -3)\right) \nonumber\\
    & & \ \ \ \ +\ \frac{1}{2} \sin \left(\frac{1}{4} (-2 \sigma+\pi +3)\right) \left(\sqrt{2} (\sigma-3) \tan \left(\frac{\sigma-1}{\sqrt{2}}\right)+2\right)
     \label{SpScaling}
  \end{eqnarray}
  for $\mathcal{G} = {\rm Sp}$, and
\begin{eqnarray}
  \lambda_{\mathcal{G},\sigma} &\ = \ & \lambda_{Sp} + \int_{-\sigma}^{\sigma} \widetilde{g}_{\textrm{SO(Odd)}}(y) dy  \nonumber\\
  &\ = \ & \lambda_{Sp} + 4 (\sigma-1) \sin \left(\frac{1}{4} (2 \sigma+\pi -3)\right)+4 \sqrt{2} \sin \left(\frac{1}{4} (3-2 \sigma)\right) \nonumber\\
  & & \ \ \ \ -2 \sqrt{2} (\sigma-2) \sin \left(\frac{1}{4} (-2 \sigma+\pi +3)\right) \tan \left(\frac{\sigma-1}{\sqrt{2}}\right)
 \label{SOOddScaling}
\end{eqnarray}
for $\mathcal{G} = {\rm SO(odd)}$, completing the proof.
\end{proof}

\newpage

\section{Reduction to Finite-Dimensional Optimization All $\sigma$} \label{findimreductionsection}
Initially, we only know $g \in L^{2}(-\sigma,\sigma)$. So, our optimization problem occurs at first over an infinite-dimensional space. In this section, we reduce the problem to a finite dimensional optimization problem over $\R^{n}$ for some $n$, which we find explicitly, as a function of $\sigma$, in Corollary \ref{DimensionOfOptimizationCor}.  \\

The setup in this section works for all $\sigma \ge .5$. However, for smaller values of $\sigma$, we have already found the optimal functions explicitly. 

We accomplish this in the following way. 
\begin{itemize}
  \item As in the previous sections, differentiate the integral equation \eqref{eq:0} to arrive at two systems of location--specific delay differential equations
  \item We show, using induction and mono-invariants, that each system always resolves to a non-trivial ODE on two intervals. So, on those interval, the optimal $g$ falls into a finite-dimensional family of solutions. 
  \item We show, using the system, that the solution on those two intervals completely determines a finite-dimensonal family in which our optimal $g$ lives. 
\end{itemize}
The main results of this section are Theorems \ref{ExplicitOutsideDiffEqTheorem} and \ref{GOnWholeIntervalThm}. Theorem  \ref{ExplicitOutsideDiffEqTheorem} provides an explicit linear ODE that describes $g$ towards the outermost end of the interval $[-\sigma,\sigma]$. Theorem \ref{GOnWholeIntervalThm} then describes the optimal $g$ on all of $[-\sigma,\sigma]$ in terms of the finite-dimensional family of solutions to the ODE from Theorem \ref{ExplicitOutsideDiffEqTheorem}.  

In the following subsection, we present the general system of delay differential equations, and show a method for reduction to an ODE on the outermost intervals. 
\subsection{Presentation and Reduction of the General System}
Before presenting the location-specific delay-differential equations, we will first describe the intervals into which we subdivide $[-\sigma,\sigma]$. In the examples $.5 < \sigma < 1$ and $1 < \sigma < 1.5$, we solved two systems to find our family for the optimal $g$. We continue with this approach here. There are two cases. \\

If $k \le \sigma < k + 1/2$ for some positive integer $k$, then our first system of intervals is 
  \begin{align*}
    I_{0}:\ &= \ [2k - \sigma, \sigma] \\
    I_{1}:\ &= \ I_{1} - 1 \\
    &\vdots \\
    I_{2k}:\ &= \ I_{1} - 2k, 
  \end{align*}
and our second system is 
\begin{align*}
  J_{0} \ &:= \ [\sigma - 1, 2k - \sigma] \\
  J_{1} \ &:= \ J_{1} - 1 \\
  &\vdots \\
  J_{2k-1} \ &:= \ J_{1} - (2k - 1). 
\end{align*}

If $k + 1/2 \le \sigma < k + 1$ for some positive integer $k$, then our second system of intervals is
\begin{align*}
  J_{0}' \ &:= \ [2k + 1 - \sigma, \sigma] \\
  J_{1}' \ &:= J_{1} - 1 \\
  &\vdots \\
  J_{2k + 1}' \ &:= \ J_{1} - (2k + 1), 
\end{align*}
and the first is
\begin{align*}
  I_{0}' \ &:= \ [\sigma -1, 2k + 1 - \sigma] \\
  I_{1}' \ &:= \ I_{1} - 1 \\
  &\vdots \\
  I_{2k}' \ &:= \ I_{1} - 2k. 
\end{align*}

Note that when $\sigma$ is an integer or a half-integer, one system of intervals is just a collection of finitely many points, each contained in the other system. \\

For the remainder of this section, we will refer to the collection of $I_{i}$ (or $I_{i}')$ as the ``first system'' and the collection of $J_{i}$ (or $J_{i}')$ as the ``second system''. 
\begin{lemma}
  Let $\sigma > 1$ and suppose $k \le \sigma < k + 1/2$ for some positive integer $k$. Then, the optimal $g$ satisfies 
 \begin{align}
   g'(x) - \beta_{\mathcal{G}}g(x - 1) \ &= \ 0 \tag{I.1} \label{first1} \\
   g'(x-1) + \beta_{\mathcal{G}}g(x) - \beta_{\mathcal{G}}g(x-2) \ &= \ 0 \tag{I.2} \label{first2}\\
    g'(x-2) + \beta_{\mathcal{G}}g(x) - \beta_{\mathcal{G}}g(x-3) \ &= \ 0 \tag{I.3} \\
    &\vdots \notag \\
    g'(x - (2k -1)) + \beta_{\mathcal{G}} g(x - (2k-2)) - \beta_{\mathcal{G}} g(x - 2k) \ &= \ 0 \tag{I.$2k$} \label{first2k}\\
    \label{first2k1} g'(x - 2k) + \beta_{\mathcal{G}}g(x - (2k-1)) \ &= \ 0 \tag{I.$2k + 1$} 
  \end{align}
for $x \in I_{0}$, and satisfies
  \begin{align*}
    g'(x) - \beta_{\mathcal{G}}g(x - 1) \ &= \ 0 \tag{J.1a} \label{second1a} \\
    g'(x-1) + \beta_{\mathcal{G}}g(x) - \beta_{\mathcal{G}}g(x-2) \ &= \ 0 \tag{J.2a} \label{second2a} \\
    g'(x-2) + \beta_{\mathcal{G}}g(x) - \beta_{\mathcal{G}}g(x-3) \ &= \ 0 \tag{J.3a} \\
    &\vdots \notag \\
    g'(x - (2k -2))  + \beta_{\mathcal{G}} g(x - (2k -3)) - \beta_{\mathcal{G}} g(x - (2k-1)) \ &= \ 0 \tag{J.$2k-1$a} \label{second2k1a}\\
    g'(x - (2k -1)) + \beta_{\mathcal{G}}g(x - (2k -2)) \ &= \ 0 \tag{J.$2k$a}\label{second2ka}
  \end{align*}
  for $x \in J_{0}$. If $k + 1/2 \le \sigma < k + 1$ for some positive integer $k$, then the optimal $g$ satisfies \eqref{first1} - \eqref{first2k1} for $x \in I_{0}'$ and   \begin{align*}
    g'(x) - \beta_{\mathcal{G}}g(x - 1) \ &= \ 0 \tag{J.1b} \label{second1b}\\
    g'(x-1) + \beta_{\mathcal{G}}g(x) - \beta_{\mathcal{G}}g(x-2) \ &= \ 0 \tag{J.2b} \label{second2b}\\
    g'(x-2) + \beta_{\mathcal{G}}g(x) - \beta_{\mathcal{G}}g(x-3) \ &= \ 0 \tag{J.3b} \\
    &\vdots \notag \\
    g'(x - 2k)  + \beta_{\mathcal{G}} g(x - (2k -1)) - \beta_{\mathcal{G}} g(x - (2k+1)) \ &= \ 0 \tag{J.$2k+1$b} \label{second2k1b}\\
    g'(x - (2k + 1)) + \beta_{\mathcal{G}}g(x - 2k) \ &= \ 0 \tag{J.$2k+2$b} \label{second2k2b}
  \end{align*}
  for $x \in J_{0}'$. 
  \label{GeneralSystemLemma}
\end{lemma}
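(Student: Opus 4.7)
The plan is to derive every equation listed from a single ``master'' delay differential equation, obtained by differentiating the optimality criterion $(I + K)(g) = 1$ of Lemma \ref{OptimalCriterionAndComputationLemma}. Writing out the kernel $m_{\mathcal{G}}$ via \eqref{mfunctions}, the criterion takes the form
\[
  g(x) + \alpha_{\mathcal{G}} \int_{-\sigma}^{\sigma} g(y)\, dy + \beta_{\mathcal{G}} \int_{\max\{-\sigma,\, x-1\}}^{\min\{\sigma,\, x+1\}} g(y)\, dy \ = \ 1.
\]
Lemma \ref{TwiceDiff} guarantees that $g$ is smooth almost everywhere, so Leibniz's rule (Theorem \ref{LiebnizRuleTheorem}) applies. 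The first integral is a constant in $x$ and so drops out; in the second, only the boundary terms survive, producing the master equation
\[
  g'(x) \ = \ \beta_{\mathcal{G}}\bigl(\mathbf{1}_{\{x-1 > -\sigma\}}\, g(x-1) \ - \ \mathbf{1}_{\{x+1 < \sigma\}}\, g(x+1)\bigr)
\]
at almost every $x \in (-\sigma, \sigma)$. Every equation in the lemma will then be an instance of this master equation at some shifted input $x - j$; which of the two indicators is active at that shifted input will vary with $j$ and with the location of $x$.

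From here the argument is a bookkeeping exercise. Fix $x$ in the interior of $I_0 = [2k - \sigma, \sigma]$ under the hypothesis $k \le \sigma < k + 1/2$. At the shift $j = 0$ one has $x + 1 \ge \sigma$, killing the $g(x+1)$ term, while $x - 1 > -\sigma$ since $k \ge 1$, giving \eqref{first1}. For each intermediate $j = 1, \ldots, 2k - 1$ both $(x-j) \pm 1$ lie strictly inside $(-\sigma, \sigma)$, so both indicators activate and one recovers the two-term equations \eqref{first2} through \eqref{first2k}. At the terminal shift $j = 2k$ the bound $\sigma < k + 1/2$ forces $(x - 2k) - 1 \le -\sigma$, killing the $g(x - 2k - 1)$ term and producing \eqref{first2k1}. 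The same analysis on $J_0 = [\sigma - 1,\, 2k - \sigma]$ produces \eqref{second1a}--\eqref{second2ka}: the upper indicator is already off at $j = 0$ (since interior $x \in J_0$ gives $x + 1 > \sigma$), the middle shifts $j = 1, \ldots, 2k - 2$ are unclipped, and at $j = 2k - 1$ the lower indicator switches off because $x - 2k \le -\sigma$ throughout $J_0$.

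The regime $k + 1/2 \le \sigma < k + 1$ is handled identically with $I_0' = [\sigma-1,\, 2k+1-\sigma]$ and $J_0' = [2k+1-\sigma,\, \sigma]$ in place of $I_0$ and $J_0$. One verifies that $I_0'$ also supports exactly $2k + 1$ shifts, with clipping occurring at the same places as for $I_0$, recovering \eqref{first1}--\eqref{first2k1}; while $J_0'$ is long enough to support one additional shift, yielding the $2k + 2$ equations \eqref{second1b}--\eqref{second2k2b}. In every case the clipping pattern is determined entirely by comparing the endpoints of the shifted interval $I_j$ or $J_j$ against $\pm \sigma$, an arithmetic which uses the precise inequality $\sigma < k + 1/2$ (resp.\ $\sigma \ge k + 1/2$) exactly at the terminal shift.

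The main obstacle is purely combinatorial: for each of the four sub-cases one must verify at each shift whether each of $(x - j) \pm 1$ lies in $(-\sigma, \sigma)$, and keep the book straight on which side the clipping falls. A small subtlety is that the two indicators are discontinuous precisely on the measure-zero set where a shifted point lands on $\pm \sigma$, but Lemma \ref{TwiceDiff} only requires the equations to hold almost everywhere, so these exceptional points can simply be excluded.
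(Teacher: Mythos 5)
Your proposal is correct and follows essentially the same route as the paper: the paper likewise writes the optimality criterion $(I+K)(g)=1$ with integration limits clipped at $\pm\sigma$ (displayed there as three regional forms rather than your single indicator-equipped master equation), differentiates under the integral sign via Leibniz using the almost-everywhere smoothness of $g$, and reads off which boundary term survives at each shifted argument. Your explicit endpoint arithmetic at each shift, including the role of $\sigma<k+1/2$ versus $\sigma\ge k+1/2$ at the terminal shift, is exactly the bookkeeping the paper leaves implicit, so there is nothing to correct.
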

\begin{proof}
Note that for $|x| < \sigma - 1$ the optimal $g$ satisfies 
\begin{equation}
  g(x) + \beta_{\mathcal{G}}\int_{x-1}^{x + 1} g(y) \ dy + \alpha_{\mathcal{G}} \int_{-\sigma}^{\sigma} g(y) \ dy \ = \ 1, 
  \label{OptimalConditionMiddleSigma}
\end{equation}
and for $x \ge \sigma - 1$ we have 
\begin{equation}
  g(x) + \beta_{\mathcal{G}} \int_{x-1}^{\sigma} g(y) \ dy + \alpha_{\mathcal{G}}\int_{-\sigma}^{\sigma} g(y) \ dy \ = \ 1, 
  \label{OptimalConditionsRightSigma}
\end{equation}
and for $x \le -(\sigma - 1)$ this condition becomes 
\begin{equation}
  g(x) + \beta_{\mathcal{G}}\int_{-\sigma}^{x + 1} g(y) \ dy + \alpha_{\mathcal{G}} \int_{-\sigma}^{\sigma} g(y) \ dy  \ = \ 1.
  \label{OptimalConditionLeftSigma}
\end{equation}
Equations \eqref{first2} - \eqref{first2k}, \eqref{second2a} - \eqref{second2k1a}, and \eqref{second2b} - \eqref{second2k1b} come from differentiating \eqref{OptimalConditionMiddleSigma} under the integral sign, while equations \eqref{first1}, \eqref{first2k1}, \eqref{second1a}, \eqref{second2ka}, \eqref{second1b}, and \eqref{second2k2b} come from applying Liebniz's rule to either \eqref{OptimalConditionsRightSigma} or \eqref{OptimalConditionLeftSigma}. 
\end{proof}

\bigskip

\begin{remark}
  In establishing this sytem, we do not yet use that $g$ is even. We will only use this fact at the very end of our proof of Theorem \ref{GOnWholeIntervalThm}, after we have solved for $g$ on $[0,\sigma]$. 
\end{remark} 

\bigskip

In general, this system can be reduced to an ODE for $g$ in $I_{0}$ and $J_{0}$ or $I_{0}'$ and $J_{0}'$, whichever are the two outermost intervals. To do so, we create the following definitions. 

\begin{definition}
  In the systems of delay differential equations, \eqref{first1} -- \eqref{first2k1}, \eqref{second1a} -- \eqref{second2ka}, or \eqref{second1b} -- \eqref{second2k2b}, the \textit{$i^{\textrm{th}}$ equation} is the one labeled (I.$i$), (J.$i$a), or (J.$i$b). The \textit{final equation} is the last one listed in the order above, regardless of the value of $k$. 
\end{definition}

The following definition/algorithm entails manipulating a single equation, the first equation, in systematic ways, based on the other delay differential equations. 

\begin{definition}
  The \textit{current expression} or \textit{current equation} is the first equation after all manipulations until those in the current step have been executed. 
\end{definition}

In this language, the current expression starts out as 
\begin{equation}
  g'(x) - \beta_{\mathcal{G}}g(x - 1) \ = \ 0.
  \label{firsteqfornewlangexample}
\end{equation}
We claim that using the other equations, we can change the current expression from \eqref{firsteqfornewlangexample} to some non-trivial ODE that describes $g$ on $I_{1}, I_{1}', J_{1}$, or $J_{1}'$. \\

All the terms we will deal with are of the form $\beta g^{(m)}(x - r)$ for some $\beta \in \R$. For these terms, we use the following notation. 
\begin{definition}
  Let $T$ be a term of the form $\beta g^{(m)}(x - r)$ for some $\beta \in \R$. We say the \textit{integer degree}, $Z(T)$, of $T$, is $r$, the \textit{differential degree}, $D(T)$, of $T$ is $m$, and the \textit{full degree}, $F(T)$, of $T$ is $r + m$, the sum of the integer and differential degrees.  
\end{definition}

\begin{definition} \label{upathdef}
  Let $S$ be one of the systems of delay differential equations (\eqref{first1} -- \eqref{first2k1}, \eqref{second1a} -- \eqref{second2ka}, or \eqref{second1b} -- \eqref{second2k2b}). We define the \textit{U path} through the system as follows.  
  \begin{itemize}
    \item Differentiate the first equation. We arrive at 
      \begin{equation}
	g^{(2)}(x) - \beta_{\mathcal{G}}g'(x - 1) \ = \ 0. 
	\label{AfterDiffFirstEq}
      \end{equation}
    \item Apply the second equation to substitute $\beta_{\mathcal{G}}^{2}g(x) - \beta_{\mathcal{G}}^{2}g(x - 2)$ for $-\beta_{\mathcal{G}} g'(x-1)$. 
    \item Differentiate the current expression, which is now 
      \begin{equation}
	g^{(2)}(x) + \beta_{\mathcal{G}}^{2} - \beta_{\mathcal{G}}^{2}g(x-2),
	\label{SecondCurrExpress}
      \end{equation}
      and use the third equation to make a substitution for the integer degree two (and differential degree one) term. 
    \item Continue to differentiate the current expression and use the $r^{\textrm{th}}$ equation to make a substitution for the term $T$ such that $Z(T) \ = \ r-1$ and $D(T) \ = \ 1$ in terms of $g(x - (r + 1))$ and $g(x -(r - 1))$. Stop this process when we have used the final equation to substitute $\beta_{\mathcal{G}}^{q} g(x - l)$ for $\beta^{q+1}_{\mathcal{G}} g'(x - (l + 1))$ for some $q \in \Z$. This step in the $U$ path is called ``the turn''. 
    \item Note that all of the equations in our system (and their derivatives) can be used in the ``reverse direction'', that is, we are given that
      \begin{equation}
	g^{(m)}(x - r) \ = \ \frac{1}{\beta_{\mathcal{G}}}g^{(m + 1)}(x -(r-1)) +  g^{(m)}(x - (r-2))) 
	\label{GeneralReverseDirectionFormula}
      \end{equation}
      for any $m \in \Z_{\ge 0}$, whenever $1 < r < (\textrm{max integer appearing in the system})$. When $r = 1$, we have 
      \begin{equation}
	g^{(m)}(x - 1) = \frac{1}{\beta_{\mathcal{G}}} g^{(m + 1)}(x). 
	\label{ReverseDirForOne}
      \end{equation}
      Using such an equation in the ``reverse direction'' lowers integer degree using the general equations \eqref{GeneralReverseDirectionFormula} and \eqref{ReverseDirForOne}. \\

      In the case when $r$ is the largest integer appearing in the system, using the final equation 
      \begin{equation}
	g^{(m)}(x - r) = -\beta_{\mathcal{G}}g^{(m-1)}(x - (r -1))
	\label{ReverseDirAtEnd}
      \end{equation}
      already reduces the integer degree of every term. So, the process of trading integer degrees for differential degrees begins at the turn. \\

      With this observation, we can describe the final step of the $U$ path. After the turn, use the equations in the reverse direction to reduce the integer degree of all terms in the current expression until all have integer degree zero. From here, we must show that the ODE we have created is non-trivial. 
  \end{itemize}
\end{definition}

Finally, we introduce one more piece of terminology. 
\begin{definition} \label{descendentdef}
  Let $T$ be a term in our current expression. If we substitute $S_{1}$ and $S_{2}$ for $T$ via one of our delay differential equations (or a derivative thereof), in the course of the $U$-path, we say that $S_{1}$ and $S_{2}$ are \textit{direct U-descendents} of $T$. We say a descendent of a descendent of $T$ is also a descendent of $T$, but is not a direct descendent unless it is obtained by a single substitution. 

\end{definition}

Observing equations \eqref{first1} -- \eqref{first2k1}, \eqref{second1a} -- \eqref{second2ka}, \eqref{second1b} -- \eqref{second2k2b}, and \eqref{GeneralReverseDirectionFormula}, we note that any direct descendent $S$ of a term $T$ either realizes $F(S) = F(T)$ or $F(S) = F(T) - 2$. 
\begin{definition}
  If $S$ is a direct descendent of $T$ and $F(S) = F(T)$, then we say $S$ is a \textit{high descendent} of $T$. If $S$ is a direct descedent of $T$ and $F(S) = F(T) - 2$, we say $S$ is a \textit{low descendent} of $T$. 
\end{definition}
Now, we can prove the mono-invariance of total degree on the $U$-path. Colloquially, in trading integer degrees for differential degrees or vice-versa, by moving forwards and backwards in the $U$-path we make a reasonable trade. \\

The following two results are technically not necessary for the proof of Theorem \ref{GOnWholeIntervalThm}. We include it because it provides motivation for our method and intuition for why it works. 

\begin{lemma}
  Suppose that $S$ is a $U$-descendent of $T$. Then, $F(S) \le F(T)$. Moreover, $F(S)$ has the same even/odd parity as $F(T)$. 
  \label{MonoInvTotalDegree}
\end{lemma}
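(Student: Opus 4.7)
The plan is to establish the two assertions simultaneously by induction on the length of the chain of direct descendents connecting $T$ to $S$. The key observation, already recorded in the paragraph immediately preceding the lemma, is that every direct descendent $S'$ of a term $T'$ is either a high descendent, with $F(S') = F(T')$, or a low descendent, with $F(S') = F(T') - 2$. Once this one-step claim is in hand, both the inequality $F(S) \le F(T)$ and the parity statement are inherited by transitivity: a composition of high/low steps can only decrease $F$ by an even amount.

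The one-step claim should be verified case by case, running through every rule by which a direct descendent can be produced along the $U$-path. In the forward direction, each equation of \eqref{first1}--\eqref{first2k1} (and analogously \eqref{second1a}--\eqref{second2ka}, \eqref{second1b}--\eqref{second2k2b}) lets us replace a term of the form $\beta_{\mathcal{G}}^{j} g'(x-(r-1))$ by a linear combination of $\beta_{\mathcal{G}}^{j+1}g(x-r)$ and $\beta_{\mathcal{G}}^{j+1}g(x-(r-2))$; the parent has full degree $r$, while the two children have full degrees $r$ and $r-2$. Likewise, differentiating the current expression raises the differential degree by one and leaves the integer degree fixed, so it is a high step. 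In the reverse direction, \eqref{GeneralReverseDirectionFormula} produces one high child and one low child from a parent of full degree $m+r$, equation \eqref{ReverseDirForOne} produces a single high child, and the terminal equation \eqref{ReverseDirAtEnd} produces a single low child. In every case we land on $F(S') \in \{F(T'), F(T')-2\}$.

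The inductive step is then immediate. Suppose $S$ is a $U$-descendent of $T$ obtained by $n+1$ substitutions, and let $S'$ be the penultimate term, so that $S$ is a direct $U$-descendent of $S'$ and $S'$ is a $U$-descendent of $T$ of chain-length $n$. By the inductive hypothesis,
\begin{equation*}
F(S') \le F(T), \qquad F(S') \equiv F(T) \pmod{2}.
\end{equation*}
By the one-step claim, $F(S) \le F(S')$ and $F(S) \equiv F(S') \pmod{2}$. Combining the two pairs of relations yields $F(S) \le F(T)$ and $F(S) \equiv F(T) \pmod{2}$, completing the induction.

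The only genuine obstacle is bookkeeping: ensuring that every mode of substitution used along the $U$-path is accounted for, in particular the two special cases at the endpoints of the integer-degree range (equations \eqref{ReverseDirForOne} and \eqref{ReverseDirAtEnd}), which behave slightly differently from the generic reverse substitution \eqref{GeneralReverseDirectionFormula}. Once these are separately checked, there is no additional content beyond the induction above.
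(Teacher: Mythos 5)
Your overall strategy is the same as the paper's: verify a one-step claim (each direct descendent is either a high descendent, preserving full degree, or a low descendent, dropping it by two) by running through the substitution rules before the turn, at the turn, and in the reverse direction, and then conclude by transitivity; the paper leaves the induction implicit, you make it explicit, but the content is identical. Your reverse-direction bookkeeping for \eqref{GeneralReverseDirectionFormula}, \eqref{ReverseDirForOne}, and \eqref{ReverseDirAtEnd} is correct.

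There is, however, one incorrect case in your one-step analysis: you assert that ``differentiating the current expression raises the differential degree by one and leaves the integer degree fixed, so it is a high step.'' Raising $D$ by one while fixing $Z$ raises $F$ by one, so this is neither a high nor a low step; worse, if differentiation were genuinely counted as a step in the descendance chain, the lemma itself would be false, since a single differentiation both increases $F$ and flips its parity. The way out is that, per Definition \ref{descendentdef}, descendents arise only from substitutions via the delay differential equations \emph{or their derivatives}: the parent of each substitution is the already-differentiated term (so the children have differential degree one lower and integer degree shifted by $\pm 1$, giving exactly the high/low dichotomy), and the differentiation of the current expression acts uniformly on all terms at once, which is why the relative comparisons used later in Proposition \ref{ReducesToODE} are unaffected. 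Strike the differentiation case from your induction (or restate it as ``differentiation is not a descendance step''), and your argument matches the paper's proof.
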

\begin{proof}
  First, we examine our path before the turn. Consider a term $T$ of the form $\beta g^{(m)}(x - r)$. Before the turn, we substitute
  \begin{equation}
    -\beta_{\mathcal{G}}g^{(m-1)}(x - (r -1)) + \beta_{\mathcal{G}}gg^{(m-1)}g(x - (r + 1)). 
    \label{BeforeTurnSubstitution}
  \end{equation}
 for $\beta g^{(m)}(x - r)$. Call these terms $S_{1}, S_{2}$ in order of left to right. Observing that equation shows $Z(S_{1}) = Z(T) - 1, \ D(S_{1}) = D(T) - 1$, and $F(S_{1}) = F(T) - 2$. On the other hand, $Z(S_{2}) = Z(T) + 1, \ D(S_{2}) = D(T) - 1$, and $F(S_2) = F(T)$. \\

  In either case, we have $F(S_{i}) \le F(T)$, and the two total degrees are congruent mod 2. \\

  At the turn, there is only one immediate descendent of the term of highest integer degree. However, in this case we have $F(S) = F(T) - 2$, a strict decrease. \\

  After the turn, we want to reduce the integer degree of terms $T$ of the form $g^{(m)}(x - r)$. Examining the righthand side of \eqref{GeneralReverseDirectionFormula}, we see that the two terms realize $Z(S_{1}) = Z(T) - 1,\ D(S_{1}) = D(T) - 1$ and $F(S_{1}) = F(T)$. Also, $Z(S_{2}) = Z(T) - 2, \ D(S_{2}) = D(T)$, which similarly satisfies both the monotonicity and congruence requirements. 
\end{proof}
Now, we show that the $U$-path resolves to a non-trivial linear ODE.
\begin{proposition}
  The $U$-path resolves any of the systems \eqref{first1} -- \eqref{first2k1}, \eqref{second1a} -- \eqref{second2ka}, or \eqref{second1b} -- \eqref{second2k2b} to a non-trivial linear ordinary differential equation that describes $g$ on $I_{0}$ and $J_{0}$ or $I_{0}'$ and $J_{0}'$.
  \label{ReducesToODE}
\end{proposition}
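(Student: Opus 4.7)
The plan is to establish two claims separately: (i) the U-path algorithm terminates in finitely many steps, and (ii) the resulting expression is a nontrivial linear ODE in $g(x)$. Termination of the pre-turn phase is immediate from Definition \ref{upathdef}, since each step uses the next equation of the system past the first, in order; the phase ends after $N-1$ steps, where $N$ is the length of the system (so $N \in \{2k, 2k+1, 2k+2\}$ for the three systems \eqref{first1}--\eqref{first2k1}, \eqref{second1a}--\eqref{second2ka}, and \eqref{second1b}--\eqref{second2k2b} respectively). Termination of the post-turn phase follows because each application of \eqref{GeneralReverseDirectionFormula} or \eqref{ReverseDirForOne} strictly reduces the integer degree of the replaced term, and integer degrees are bounded below by zero.

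For non-triviality, the plan is to exhibit an explicit surviving leading term. Introduce the \emph{adjusted full degree} $F_{\mathrm{adj}}(T) := F(T) - n$, where $n$ counts the total number of differentiations performed in the U-path up to the current instant. This quantity is invariant under differentiation (which adds $1$ to both $F$ and $n$), invariant under every high descendant substitution, and decreases by exactly $2$ under every low descendant substitution. Since the initial first equation contains only $g'(x)$ and $-\beta_{\mathcal{G}} g(x-1)$, both of which have $F_{\mathrm{adj}} = 1$, the bound $F_{\mathrm{adj}} \le 1$ persists throughout the U-path.

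The initial term $g'(x)$ has $Z = 0$ at every subsequent step (differentiation preserves $Z$, and the substitution rule only targets terms with $Z \ge 1$), so it is never substituted. It simply absorbs each of the $N-1$ pre-turn differentiations, becoming $\pm g^{(N)}(x)$ at the end with coefficient exactly $\pm 1$. I claim this is the \emph{unique} term in the final expression with $F_{\mathrm{adj}} = 1$. Any other final term descends from the initial term $-\beta_{\mathcal{G}} g(x-1)$, and preserving $F_{\mathrm{adj}} = 1$ throughout requires lying on a pure high-descendant chain. In the pre-turn phase, such a chain strictly raises $Z$ at each step, always matching the substitution target of the next step; it therefore inevitably reaches the final equation of the system, which has only two terms and so produces only a low descendant. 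That step drops the chain's $F_{\mathrm{adj}}$ to $-1$, and subsequent post-turn substitutions can at best preserve this value, so every non-leading final term has $F_{\mathrm{adj}} \le -1$ and hence differential order strictly less than $N$.

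The final expression therefore takes the form $\pm g^{(N)}(x) + (\text{terms of order} < N) = 0$, a genuine linear ODE of order $N \ge 1$ describing $g$ on the outermost intervals. The main obstacle in executing this plan will be rigorously verifying the key claim that the pure high-descendant chain originating from $g(x-1)$ must indeed meet the final equation precisely at the turn; this requires careful bookkeeping of $(Z, D)$ pairs for each descendant throughout the full substitution tree, and must be checked separately for each of the three systems enumerated in Lemma \ref{GeneralSystemLemma}.
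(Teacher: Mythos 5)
Your argument is correct and essentially the paper's own: your adjusted full degree $F_{\mathrm{adj}} = F - n$ is just the paper's full-degree mono-invariant (Lemma \ref{MonoInvTotalDegree}) renormalized against the concurrently differentiated leading term, and both proofs conclude by noting that the only competing pure high-descendant chain, the one issuing from $g(x-1)$, is forced into the final equation at the turn, where it suffers an unrecoverable drop of two, so no term can cancel the leading $g^{(N)}(x)$ coming from $g'(x)$. The step you flag as the remaining obstacle is handled in the paper at exactly this level of $(Z,D)$ bookkeeping, so there is no genuine gap.
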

\begin{proof}
  As we begin with the first equation, we will examine the $U$-descendants of $g'(x)$ and the $U$-descendants of $g(x - 1)$. \\

  From Definition \ref{upathdef}, it is clear that there is only one descendent of the $g(x)$ term. Suppose $k$ is the largest integer in our system. Then, there are $k+ 1$ equations. In the forwards direction and at the turn, we differentiate the current expression $k$ times, once before applying each of the last $k$ equations in our system. So, the descendent of $g'(x)$ is $g^{(k + 1)}(x)$.  To show the resulting ODE is non-trivial, it suffices to show this term cannot cancel with any descendants of the $\beta_{\mathcal{G}} g(x-1)$ term. We show the descendents of $\beta_{\mathcal{G}} g(x-1)$ have lower differential degree. \\



  In the forwards direction, $g(x - 1)$ initially has total degree equal to that of $g'(x)$. During the forwards direction, when we differentiate $g(x - r)$ and use the $(r+ 1)^{\textrm{st}}$ equation for substitution, we end up with a term of higher integer degree but equal total degree and one term with lower integer degree and lower total degree. Call the term of equal total degree a high descendent and the one of lower total degree a low descendent. \\

  From Lemma \ref{MonoInvTotalDegree}, we know that no descendants of any low descendent reach degree $k + 1$, since the degree of one of their ancestors dips strictly below that of the current descendent of $g'(x)$. It therefore suffices to trace the descendants of the sequence of high descendants. \\

  This sequence of high descendents progresses as $g(x -1), g(x - 2), \dots , g(x - k)$. When we reach $g(x - k)$, the total degree of the $g(x)$ descendent is $k$, as we have differentiated the current expression $k$ times. We then differentiate the current expression again. Applying the final epxression in our system gives us only a low descendent of $g'(x - k)$, namely $g(x - (k - 1))$. This has total degree two less than $g'(x)$ descendent. By Lemma \ref{MonoInvTotalDegree}, none of its descendents can recover this difference. \\

  After the turn in the $U$-path, we no longer differentiate the current expression. So, we can say that our final expression, the ODE, in simplest terms, has a term $g^{(k+1)}(x)$ with no other terms of equal total degree. The ODE is therefore non-trivial. 
\end{proof}
\subsection{An Explicit ODE On Outermost Intervals for All $\sigma$}
We can explicitly find the differential equation to which our system resolves. In order to do so, we prove two lemmata. 
\begin{lemma}\label{AfterTurnCurrentLemma}
  After the turn in the $U$-path, the current expression is
  \begin{equation}\label{AfterTurnCurrentEqn}
    g^{(k + 1)}(x) + \sum_{m=0}^{k-1} \beta_{\mathcal{G}}^{m+2}g^{(k-1-m)}(x - m),
  \end{equation}
  where $k$ is the largest integer appearing in our system.
\end{lemma}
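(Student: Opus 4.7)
The plan is to induct on $n$, tracking the current expression $C_n$ produced after the $n$-th step of the $U$-path. Throughout, let $k$ denote the largest integer appearing in our system, so the system consists of $k+1$ equations, with the first and last being one-sided ``boundary'' equations and the middle ones being symmetric ``interior'' equations, as in Lemma~\ref{GeneralSystemLemma}. Step $n$ of the $U$-path differentiates $C_{n-1}$ and then uses the $(n+1)$-th equation of the system to substitute for the unique term of integer degree $n$ and differential degree $1$; the turn is step $k$, which uses the final equation.

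The inductive hypothesis I would establish is that, for $0 \le n \le k-1$,
\begin{equation*}
C_n \ = \ g^{(n+1)}(x) + \sum_{j=0}^{n-1} \beta_{\mathcal{G}}^{j+2}\, g^{(n-1-j)}(x - j) - \beta_{\mathcal{G}}^{n+1}\, g(x - (n+1)).
\end{equation*}
The base case $n = 0$ is the first equation of the system, with empty sum. For the inductive step, differentiating $C_{n-1}$ produces
\begin{equation*}
g^{(n+1)}(x) + \sum_{j=0}^{n-2} \beta_{\mathcal{G}}^{j+2}\, g^{(n-1-j)}(x-j) - \beta_{\mathcal{G}}^n\, g'(x-n).
\end{equation*}
Because $n \le k - 1$, the $(n+1)$-th equation is an interior equation, so $g'(x-n) = -\beta_{\mathcal{G}}\, g(x-(n-1)) + \beta_{\mathcal{G}}\, g(x-(n+1))$. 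Substituting converts $-\beta_{\mathcal{G}}^n g'(x-n)$ into $\beta_{\mathcal{G}}^{n+1} g(x-(n-1)) - \beta_{\mathcal{G}}^{n+1} g(x-(n+1))$. The first piece is precisely the new $j = n-1$ entry of the sum (observe $\beta^{(n-1)+2} g^{(0)}(x-(n-1)) = \beta^{n+1} g(x-(n-1))$), while the second is the new tail term, yielding $C_n$ in the claimed form.

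At the turn, step $k$ instead uses the final equation, which is one-sided: $g'(x-k) = -\beta_{\mathcal{G}} g(x-(k-1))$, with \emph{no} $g(x-(k+1))$ piece. Differentiating $C_{k-1}$ and substituting for $g'(x-k)$ now contributes only $\beta_{\mathcal{G}}^{k+1} g(x-(k-1))$, which is exactly the $m = k-1$ entry of the sum in the lemma, and no new tail term is produced. We therefore arrive at
\begin{equation*}
C_k \ = \ g^{(k+1)}(x) + \sum_{m=0}^{k-1} \beta_{\mathcal{G}}^{m+2}\, g^{(k-1-m)}(x - m),
\end{equation*}
which is \eqref{AfterTurnCurrentEqn}. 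The only care required is indexing discipline: one must verify that at each step the unique term to which the $(n+1)$-th equation is to be applied (namely $g'(x-n)$) is exactly the tail from the previous step and appears nowhere else in the sum. This is immediate from the inductive formula, since the terms inside the sum have integer shifts in $\{0, 1, \ldots, n-2\}$, none of which equals $n$, and differentiating can only raise the differential degree of each summand without changing its integer shift. The same reasoning applies uniformly to the first and second systems of Lemma~\ref{GeneralSystemLemma}, since both share the same boundary/interior structure.
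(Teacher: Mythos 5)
Your proof is correct, and the bookkeeping (the signs, the exponents of $\beta_{\mathcal{G}}$, and the index shift at the turn) all checks out, but it is organized differently from the paper's. The paper inducts on $k$ itself: its base case is the two-equation system with $k=1$, and its inductive step compares the $U$-path of the system with largest integer $k+1$ to that of the system with largest integer $k$, observing that the two paths agree before the turn except that the $k$-th substitution in the larger system produces the extra high descendent $-\beta_{\mathcal{G}}^{k+1}g(x-(k+1))$ alongside the low descendent; one then differentiates once more and executes the turn. You instead fix a single system and induct on the step number $n$ of the $U$-path, carrying the explicit loop invariant $C_n = g^{(n+1)}(x) + \sum_{j=0}^{n-1}\beta_{\mathcal{G}}^{j+2}g^{(n-1-j)}(x-j) - \beta_{\mathcal{G}}^{n+1}g(x-(n+1))$ for $n \le k-1$, and you note that the turn converts the tail into the $m=k-1$ summand instead of spawning a new tail. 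Your invariant is precisely the expression the paper writes down immediately before the turn in its inductive step (its equation for the current expression pre-turn), so the computational content coincides; what your organization buys is self-containedness, since you never need the somewhat informal claim that the $U$-paths of two different systems coincide until a term of integer degree $k+1$ appears, and you make explicit the check that the term being substituted at each step is the unique one of that integer degree, which the paper leaves implicit. The paper's organization buys brevity by reusing the smaller system's computation wholesale.
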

\begin{proof}
  We proceed by induction, starting with $k = 1$. We begin by resolving the system 
  \begin{align}
    g'(x) - \beta_{\mathcal{G}}g(x-1) \ &= \ 0 \label{firstinbasecase} \\
    g'(x -1) + \beta_{\mathcal{G}}(x) \ &= \ 0 \label{secondinbasecase}. 
  \end{align}
 We differentiate \eqref{firstinbasecase}, so that our current expression is
 \begin{equation}
   g^{(2)}(x) - \beta_{\mathcal{G}}g'(x -1) \ = \ 0.
   \label{Afterdiffbasecase}
 \end{equation}
 Then, we execute the turn using \eqref{secondinbasecase} to arrive at the expression 
 \begin{equation}
   g^{(2)}(x) + \beta_{\mathcal{G}}^{2}g(x) \ = \ 0,
   \label{basecaseholds}
 \end{equation}
 which shows our base case holds. \\

 For the inductive step, note that before the turn, the $U$-path for the system with largest integer $k$ agrees with the $U$-path for the system with largest integer $k + 1$ until a term of integer degree $k + 1$ appears. The difference at this point is that after the $k^{\textrm{th}}$ differentiation, the term $\beta_{\mathcal{G}}^{k}g'(x - k)$ produces a high descendent, $-\beta_{\mathcal{G}}^{k + 1}g(x - (k + 1))$, in addition to its low descendent $\beta_{\mathcal{G}}^{k + 1}g(x - (k - 1))$, as the support of our $g$ is larger. So, assuming our inductive hypothesis, after $k$ differentiations and substitutions, the current expression is 
 \begin{equation}
   g^{(k + 1)}(x) + \left( \sum_{m=0}^{k-1}\beta_{\mathcal{G}}^{m + 2} g^{(k -1 - m)}(x - m) \right) - \beta_{\mathcal{G}}^{k+1}g(x - (k + 1)). 
   \label{CurrentExpressionPreTurnIndStep}
 \end{equation}
 We then differentiate \eqref{CurrentExpressionPreTurnIndStep}, and make the substitution given by 
 \[ g'(x - (k + 1)) + \beta_{\mathcal{G}}g(x - k) \ = \ 0 \] 
 to arrive at the new current expression of 
 \begin{equation}
   g^{(k + 2)}(x) + \sum_{m=0}^{k} \beta_{\mathcal{G}}^{m + 2} g^{(k - m)}(x - m)
   \label{IndStepCompleted}
 \end{equation}
 which completes the inductive step. 
\end{proof}

Each of the terms in \eqref{AfterTurnCurrentEqn} may be resolved as a linear function of $g(x)$ and its derivatives. In the following lemmata, we provide an explicit formula for these integer degree zero terms. 

\begin{lemma}
  For the optimal $g$, we have 
  \begin{equation}
    g^{(r)}(x - m) = \sum_{n + k = m} {n \choose k} \beta_{\mathcal{G}}^{k - n}g^{(r + n-k)}(x)
    \label{ResolvedLinearTermEqn}
  \end{equation}
  \label{ResolvedLinearTermLemma}
wherever $g(x - m)$ is smooth in $[-\sigma,\sigma]$. 
\end{lemma}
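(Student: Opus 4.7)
My plan is to prove the identity by induction on $m$, with the two-term recurrence \eqref{GeneralReverseDirectionFormula} as the engine of the inductive step and Pascal's rule as the algebraic closer. I would first dispatch the base cases: when $m = 0$, both sides reduce trivially to $g^{(r)}(x)$, and when $m = 1$, the only pair $(n,k)$ with $n+k=1$ giving a nonzero binomial coefficient is $(n,k) = (1,0)$, which contributes $\beta_{\mathcal{G}}^{-1} g^{(r+1)}(x)$ and thereby recovers \eqref{ReverseDirForOne} (applied to $g^{(r)}$ rather than $g$).

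For the inductive step, I would assume the formula holds at offsets $m-1$ and $m-2$ and apply \eqref{GeneralReverseDirectionFormula} in the form
\[
  g^{(r)}(x-m) \ = \ \beta_{\mathcal{G}}^{-1}\, g^{(r+1)}(x-(m-1)) \ + \ g^{(r)}(x-(m-2)),
\]
then substitute the inductive formula into each summand and collect terms of like derivative order. Writing $j := n - k$ for the effective derivative shift, the pair $(n,k)$ with $n+k=m$ and $n-k=j$ is uniquely $n = (m+j)/2,\ k = (m-j)/2$, so the target claim asserts that the coefficient of $\beta_{\mathcal{G}}^{-j} g^{(r+j)}(x)$ equals $\binom{(m+j)/2}{(m-j)/2}$. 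A short calculation shows that in the first summand the matching pair is $((m+j-2)/2,\ (m-j)/2)$ and in the second it is $((m+j-2)/2,\ (m-j-2)/2)$, and in both cases the $\beta_{\mathcal{G}}$ powers combine to $\beta_{\mathcal{G}}^{-j}$. The combined coefficient is therefore
\[
  \binom{(m+j-2)/2}{(m-j)/2} \ + \ \binom{(m+j-2)/2}{(m-j-2)/2},
\]
which collapses to $\binom{(m+j)/2}{(m-j)/2}$ by Pascal's identity, closing the induction.

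The main obstacle I anticipate is not conceptual but bookkeeping: correctly aligning the two re-indexings so that Pascal's rule fires cleanly, and keeping track of the parity constraint that $m$ and $j$ must share (else $(m \pm j)/2$ is not a non-negative integer and the binomial coefficient vanishes appropriately on its own). A secondary concern is the scope of \eqref{GeneralReverseDirectionFormula}: it is stated only at interior integer offsets, and at the largest offset one must instead invoke the terminal equation \eqref{ReverseDirAtEnd}. However, since $g$ is supported in $[-\sigma,\sigma]$, the term $g^{(r)}(x-(m-2))$ that is ``missing'' in the terminal regime vanishes of its own accord, so the binomial identity persists by truncation rather than requiring a separate argument. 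The differentiations performed along the recurrence are legitimate wherever $g$ is smooth, which by Lemma \ref{TwiceDiff} is a full-measure set, matching the hypothesis of the lemma.
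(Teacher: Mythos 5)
Your proof is correct and runs on the same machinery as the paper's: everything reduces to the two-term reverse relation \eqref{GeneralReverseDirectionFormula} plus binomial combinatorics, and the difference is essentially one of packaging. The paper proves \eqref{ResolvedLinearTermEqn} by a direct count: it places each term at the lattice point (integer degree, differential degree), reads one use of \eqref{GeneralReverseDirectionFormula} as a diagonal step (costing a factor $\beta_{\mathcal{G}}^{-1}$) or a horizontal step, and counts the ${d+h \choose h}$ lattice paths from $(m,r)$ down to integer degree zero (the Pascal's-triangle picture of Figure \ref{fig:PascalFigure}). You instead verify the closed form by induction on $m$, with Pascal's rule collapsing the two re-indexed sums; your base cases, the parity bookkeeping, the index alignment in the inductive step, and the appeal to Lemma \ref{TwiceDiff} for the required smoothness are all fine. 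The two arguments are interchangeable: yours is slightly more mechanical to check, while the paper's makes the combinatorial origin of the binomial coefficient visible.

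One side remark in your closing paragraph is inaccurate, though it does not damage the proof. The term ``missing'' from the terminal relation \eqref{ReverseDirAtEnd} is not $g^{(r)}(x-(m-2))$ --- that term sits at a shallower offset, well inside the support, and has no reason to vanish. What vanishes for support reasons is the term at offset one beyond the largest integer of the system (for instance $g(x-(2k+1))$ when $x$ lies in the outermost interval), which is why the final delay equation \eqref{first2k1} has only one integral term and why \eqref{ReverseDirAtEnd} is one-sided. In any case no truncation argument is needed: the application of the lemma in Theorem \ref{ExplicitOutsideDiffEqTheorem} only invokes offsets strictly below the largest integer, and even at the largest offset the two-term relation your induction requires is exactly the rearrangement of \eqref{first2k}, so your induction is valid on the same domain as the paper's path count.
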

\begin{proof}
	When resolving these terms in the backwards direction of the $U$-path, we use \eqref{GeneralReverseDirectionFormula}. From this equation, we know that each term can be expressed as a function of terms of integer degree zero. Throughout, Figure \ref{fig:PascalFigure}, a resolution of $g(x-5)$ will be a helpful reference. 
  \\

  Given a term $T$, we place $T$ on the lattice $\Z \times \Z$. Place $T$ at $(Z(T), D(T))$. So, in this system, the term $g^{(r)}(x - m)$ begins at the point $(m,r)$. Then, using \eqref{GeneralReverseDirectionFormula}, we see that each term ``branches'' into its direct descendants. Unless a term has integer degree one, it will have two direct descendants. The high descendent, $S_{1}$, will have $Z(S_{1})  =  Z(T) - 1,  D(S_{1})  =  D(T) + 1$. The low descendent, $S_{2}$, will have $Z(S_{2})  =  Z(T) - 2,  D(S_{2})  =  D(T)$. If $Z(T) = 1$, there is only a high descendent. \\

  Motivated by the lattice representation, we say moving from a term to a high descendent is a \textit{diagonal step} and moving to a low descendent is a \textit{horizontal step}. \\

  Let $T^*$ be an integer degree zero descendent of $T = g^{(r)}(x - m)$. Then, examining \eqref{GeneralReverseDirectionFormula} shows that while $Z(T^*) = 0$, $r \le D(T^*) \le r + m$, say $D(T^*) = r + d$, i.e. each path from $T$ to $T^*$ takes $d$ diagonal steps. Diagonal steps reduce integer degree by one and horizontal steps reduce integer degree by two, so $d + 2h = (d + h) + h = m$. The total number of steps is $d+h$, and there are ${d + h \choose d} = {d + h \choose h}$. such paths. \\

  \begin{figure}[H]
    \centering
    \includegraphics[scale=0.5]{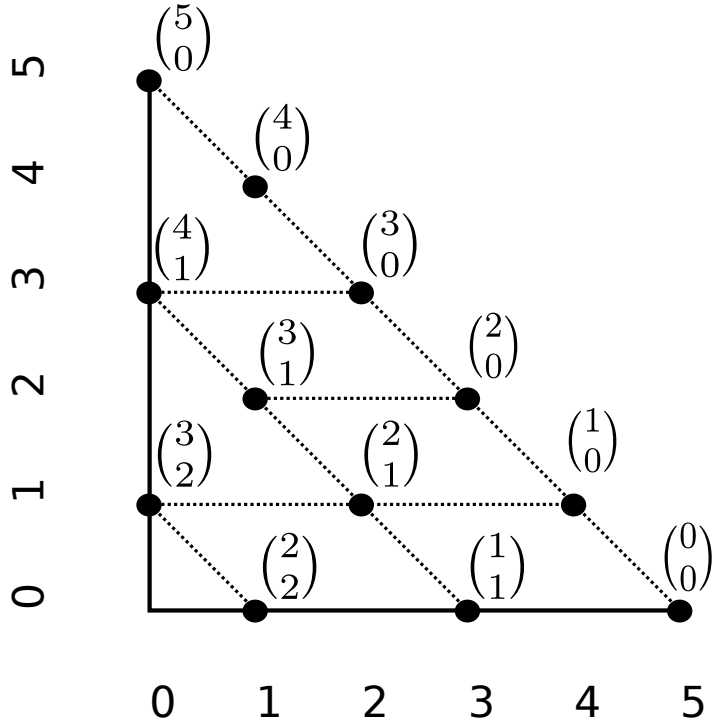}
    \caption{Reduction of the term $g(x-5)$ and the correspondence with Pascal's triangle.}
    \label{fig:PascalFigure}
  \end{figure}
  
  We now translate back into terms in our expression. Examining \eqref{GeneralReverseDirectionFormula}, we see that each diagonal step contributes a factor of $\beta_{\mathcal{G}}^{-1}$. Letting $d + h = n$ and $h = k$ gives \eqref{ResolvedLinearTermEqn}. 
\end{proof}

Lemmata \ref{AfterTurnCurrentLemma} and \ref{ResolvedLinearTermLemma} establish the following.  
\begin{theorem}
  Suppose $k \le \sigma < k + 1/2$ for some positive integer $k$. Then, the optimal $g$ satisfies 
  \begin{align}
    g^{(2k + 1)}(x) + \sum_{m=0}^{2k-1} \sum_{n + r = m} {n \choose r} \beta_{\mathcal{G}}^{m+ r - n + 2}g^{((2k - 1) - m + n - r)}(x) \ &= \ 0, &x \in (\sigma - 2(\sigma - k), \sigma) \label{Case11OutsideDiffEq}\\
    g^{(2k)}(x) + \sum_{m=0}^{2k -2} \sum_{n + r = m}^{} {n \choose r} \beta_{\mathcal{G}}^{m + r - n + 2}g^{( (2k - 2) - m + n - r)}(x) \ &= \ 0, &x \in (\sigma - 1, \sigma - 2(\sigma - k)) \label{Case12OutsideDiffEq}. 
    \end{align}
    If $k + 1/2 \le \sigma < k + 1$ for some positive integer $k$, then the optimal $g$ satisfies
    \begin{align}
      g^{(2k + 1)}(x) + \sum_{m=0}^{2k-1} \sum_{n + r = m} {n \choose r} \beta_{\mathcal{G}}^{m+ r - n + 2}g^{((2k - 1) - m + n - r)}(x) \ &= \ 0, &x \in (\sigma - 1, 2k + 1 - \sigma) \label{Case21OutsideDiffEq} \\
      g^{(2k + 2)}(x) + \sum_{m=0}^{2k} \sum_{n + r = m}^{} {n \choose r} \beta_{\mathcal{G}}^{m + r - n + 2}g^{( (2k) - m + n - r)}(x) \ &= \ 0, &x \in (2k + 1 - \sigma, \sigma) \label{Case22OutsideDiffEq}. 
    \end{align}
  \label{ExplicitOutsideDiffEqTheorem}
\end{theorem}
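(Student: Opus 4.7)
The plan is to combine Lemmata \ref{AfterTurnCurrentLemma} and \ref{ResolvedLinearTermLemma} directly. Lemma \ref{AfterTurnCurrentLemma} tells us that once we run the $U$-path through a system whose largest integer is $k_{\max}$, the current expression collapses to
\[
g^{(k_{\max}+1)}(x) + \sum_{m=0}^{k_{\max}-1} \beta_{\mathcal{G}}^{m+2}\, g^{(k_{\max}-1-m)}(x-m) = 0,
\]
and Lemma \ref{ResolvedLinearTermLemma} lets us further resolve each delayed term $g^{(r)}(x-m)$ as a linear combination of derivatives of $g$ evaluated at the single point $x$. The theorem is then nothing more than the result of this substitution carried out in each of four sub-cases.

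First I would identify the largest integer $k_{\max}$ in each of the four systems of Lemma \ref{GeneralSystemLemma}. When $k \le \sigma < k+1/2$, the first system \eqref{first1}--\eqref{first2k1} has $k_{\max} = 2k$, so Lemma \ref{AfterTurnCurrentLemma} yields the pre-resolution equation whose leading term is $g^{(2k+1)}(x)$; this holds on $I_0 = [2k-\sigma,\sigma]$, matching the interval in \eqref{Case11OutsideDiffEq}. The corresponding second system \eqref{second1a}--\eqref{second2ka} has $k_{\max} = 2k-1$, giving a leading term $g^{(2k)}(x)$ on $J_0 = [\sigma-1, 2k-\sigma]$, matching \eqref{Case12OutsideDiffEq}. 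The sub-case $k+1/2 \le \sigma < k+1$ is handled analogously, with the first and second systems swapping roles of parity.

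Next, I would substitute \eqref{ResolvedLinearTermEqn} term by term. Each $g^{(k_{\max}-1-m)}(x-m)$ is rewritten as
\[
\sum_{n+r=m}\binom{n}{r}\beta_{\mathcal{G}}^{r-n}\, g^{(k_{\max}-1-m+n-r)}(x),
\]
and combining the external coefficient $\beta_{\mathcal{G}}^{m+2}$ from Lemma \ref{AfterTurnCurrentLemma} with the internal $\beta_{\mathcal{G}}^{r-n}$ produces exactly the exponent $\beta_{\mathcal{G}}^{m+r-n+2}$ appearing in the theorem, while the derivative order becomes $(k_{\max}-1)-m+n-r$. With $k_{\max}=2k$ (resp.\ $2k-1$, $2k$, $2k+1$) this reproduces \eqref{Case11OutsideDiffEq} (resp.\ \eqref{Case12OutsideDiffEq}, \eqref{Case21OutsideDiffEq}, \eqref{Case22OutsideDiffEq}).

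The only real obstacle is indexing bookkeeping: tracking (i) which system governs which outermost interval given the location of $\sigma$ relative to $k+1/2$, and (ii) the combined exponents of $\beta_{\mathcal{G}}$ and derivative orders after the substitution. No new analytic input is required, since the smoothness needed to run the $U$-path and apply Leibniz's rule was already supplied by Lemma \ref{TwiceDiff}, and the non-triviality of the resulting ODE is guaranteed by Proposition \ref{ReducesToODE}. Thus the proof reduces to writing out the four computations in sequence and reading off the stated intervals from the definitions in \eqref{IntervalDefs} and the subsequent paragraph.
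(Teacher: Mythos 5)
Your proposal is correct and is essentially the paper's own argument: the paper likewise identifies the largest integer in each of the two systems for each range of $\sigma$ (namely $2k$ and $2k-1$ when $k \le \sigma < k+1/2$, and $2k$ and $2k+1$ when $k+1/2 \le \sigma < k+1$) and then applies Lemmata \ref{AfterTurnCurrentLemma} and \ref{ResolvedLinearTermLemma} to obtain the four ODEs on the stated outermost intervals. Your bookkeeping of the combined exponent $\beta_{\mathcal{G}}^{m+r-n+2}$ and the derivative orders matches what the paper leaves implicit.
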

\begin{proof}
  In the first case, when $k \le \sigma < k + 1/2$, the largest integer appearing in our first system is $2k$ and the largest integer appearing in our second system is $2k - 1$. In the second case, when $k + 1/2 < \sigma < k + 1$, the largest integer appearing in our first system is still $2k$, but the largest integer appearing in our second system is $2k + 1$. With this established, we simply apply lemmata \ref{AfterTurnCurrentLemma} and \ref{ResolvedLinearTermEqn}, and we have the above result almost everywhere in our specified intervals. 
  
\end{proof}

\subsection{Finite Dimensional Families of Solutions for All Intervals, All $\sigma$}

Examining the systems of delay differential equations and the fact that $g$ must be even, it is clear that knowing $g$ on the two outside intervals completely determines $g$ on $[-\sigma,\sigma]$. However, we can express the values of $g$ on the inner intervals as a function of $g$ on the outer intervals. Lemma \ref{ResolvedLinearTermLemma} will allow us to find values of $g$ on diffferent intervals as a function of values of $g$ on the outermost interval. 
\begin{theorem} 
  Let $k \le \sigma < k + 1/2$ for some positive integer, $k$ and $g$ be optimal. Then 
  \begin{equation}
    g\vert_{I_{j}}(x) \ = \ 
    \begin{cases}
       \sum_{n + k = j}^{} {n \choose k}\beta_{\mathcal{G}}^{k-n}g\vert_{I_{0}}^{(n - k)}(|x| + j) \ &1 \le j \le k\\
       g\vert_{I_{2k - j}}(x) &k+1 \le j \le 2k
    \end{cases}
    \label{Case1OptGOnIj}
  \end{equation}
  and  
  \begin{equation}
     g\vert_{J_{j}} \ = \ 
    \begin{cases}
      \sum_{n + k = j}^{} {n \choose k}\beta_{\mathcal{G}}^{k-n}g\vert_{J_{0}}^{(n - k)}(|x| + j). \ &1 \le j \le k-1 \\
      g\vert_{I_{2k-1-j}} \ &k \le j \le 2k-1. 
    \end{cases}
    \label{Case2OptGOnJj}
  \end{equation}
  If $k + 1/2 \le \sigma < k$, then 
  \begin{equation}
    g\vert_{I_{j}'}(x) \ = \ 
    \begin{cases}
       \sum_{n + k = j}^{} {n \choose k}\beta_{\mathcal{G}}^{k-n}g\vert_{I_{0}'}^{(n - k)}(|x| + j) \ &1 \le j \le k \\
       g\vert_{I'_{2k-j}} \ &k+1 \le j \le 2k
    \end{cases}
    \label{Case1OptGOnIjPrime}
  \end{equation}
  and 
  \begin{equation}
    g\vert_{J_{j}'}(x) \ = \ 
    \begin{cases}
      \sum_{n + k = j}^{} {n \choose k}\beta_{\mathcal{G}}^{k-n}g\vert_{J_{0}'}^{(n - k)}(|x| + j). \ &1 \le j \le k \\
      g\vert_{J'_{2k + 1 - j}} \ & k + 1 \le 2k + 1. 
    \end{cases}
    \label{Case2OptGOnJjPrime}
  \end{equation}
  \label{GOnWholeIntervalThm}
\end{theorem}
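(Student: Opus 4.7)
The plan is to derive all four formulas from a single application of Lemma \ref{ResolvedLinearTermLemma}, combined with the evenness of $g$ from Lemma \ref{evenlemma}. That lemma already expresses $g^{(r)}(y - m)$ as an explicit linear combination of derivatives of $g$ at $y$, provided the recursion \eqref{GeneralReverseDirectionFormula} is valid along the entire chain from $y$ to $y - m$; this recursion comes from the delay differential equations in Lemma \ref{GeneralSystemLemma}, which hold precisely when the driving point lies in the outermost interval $I_0$ (respectively $J_0$, $I_0'$, $J_0'$). So the basic identity to set up, for each $y$ in the outermost interval and each admissible $j$, is
\[ g(y - j) \ = \ \sum_{n + k = j} {n \choose k} \beta_{\mathcal{G}}^{k - n} g^{(n-k)}(y), \]
obtained by specializing Lemma \ref{ResolvedLinearTermLemma} to $r = 0$ and $m = j$.

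The substitution $x = y - j$ sends $y$ to the inner interval $I_j = I_0 - j$ (and analogously for the other three systems). For $1 \le j \le k - 1$, a direct check using the hypothesis $\sigma < k + 1/2$ shows $I_j \subset (0, \sigma]$, so $|x| = x$ and $y = x + j = |x| + j \in I_0$, yielding \eqref{Case1OptGOnIj} on the nose. For the borderline case $j = k$, the interval $I_k = [k - \sigma, \sigma - k]$ straddles $0$; when $x < 0$ I would invoke evenness to write $g(x) = g(-x)$ and apply the core identity at the positive point $-x \in I_k$, obtaining
\[ g(x) \ = \ \sum_{n+k=j} {n \choose k} \beta_{\mathcal{G}}^{k-n} g^{(n-k)}((-x) + k) \ = \ \sum_{n+k=j} {n \choose k} \beta_{\mathcal{G}}^{k-n} g^{(n-k)}(|x| + k), \]
so the unified $|x| + j$ formula is correct across both signs of $x$.

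For $k + 1 \le j \le 2k$, the interval $I_j$ lies in the negative region; the direct computation $I_{2k-j} = I_0 - (2k-j) = -(I_0 - j) = -I_j$ from the definition then lets me argue by evenness that $g|_{I_j}(x) = g(-x) = g|_{I_{2k-j}}(-x)$. Because the first-case formula for $g|_{I_{2k-j}}$ depends on $x$ only through $|x|$, it is automatically even, so $g|_{I_{2k-j}}(-x) = g|_{I_{2k-j}}(x)$, producing the second case of \eqref{Case1OptGOnIj}. The remaining formulas \eqref{Case2OptGOnJj}, \eqref{Case1OptGOnIjPrime}, and \eqref{Case2OptGOnJjPrime} follow from the same template; only the length of the underlying chain (controlled by whether the $J$-system has $2k-1$ equations in the first case or the primed $J$-system has $2k+1$ equations in the second) and the cutoff index between the ``formula case'' and the ``symmetry case'' change.

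The main technical point to verify carefully is that Lemma \ref{ResolvedLinearTermLemma} legitimately applies all the way from $y$ down to $y - 2k$: one must confirm that all intermediate terms arising in the reverse $U$-path stay inside $[-\sigma, \sigma]$ and are smooth there. Smoothness on each interior piece is furnished by Lemma \ref{TwiceDiff}, and the chain is contained in $[-\sigma, \sigma]$ precisely because $y$ starts in the outermost interval and each step of \eqref{GeneralReverseDirectionFormula} decreases the integer argument by at most two, never exceeding the system length. I expect this index bookkeeping, repeated in parallel across the four cases, to be the bulk of the write-up; none of the individual checks is hard, but keeping track of which system applies and which index range triggers the symmetry reduction is the place where care is most needed.
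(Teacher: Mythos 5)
Your proposal is correct and follows essentially the same route as the paper: the paper's proof simply resolves the terms via \eqref{GeneralReverseDirectionFormula} exactly as in Lemma \ref{ResolvedLinearTermLemma} and then uses evenness, which is precisely your specialization to $r=0$, $m=j$ together with the symmetry bookkeeping $I_{2k-j} = -I_j$. Your write-up just makes explicit the index checks and the evenness reduction that the paper leaves implicit.
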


\begin{proof}
	Again, we are resolving terms using \eqref{GeneralReverseDirectionFormula}. Argue exactly as in the proof of Lemma \ref{ResolvedLinearTermLemma}, to solve for $g$ on the innermost intervals as a function of $g$ on the outermost intervals.  
\end{proof}

\begin{corollary}\label{DimensionOfOptimizationCor}
  Theorem \ref{ExplicitOutsideDiffEqTheorem} reduces the problem of finding the optimal $g$ over $\R^{n}$, where 
  \begin{equation}
    n \ = \
    \begin{cases}
      4k + 1 \ & k < \sigma < k + 1/2 \\
      4k + 3 \ & k + 1/2 < \sigma < k + 1 \\
      2k \ &k \ = \ \sigma \\
      2k + 1 \ &k + 1/2 \ = \ \sigma. 
    \end{cases}
    \label{DimensionOfOptimizationEqn}
  \end{equation}
\end{corollary}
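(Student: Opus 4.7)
The plan is to combine Theorems \ref{ExplicitOutsideDiffEqTheorem} and \ref{GOnWholeIntervalThm} and simply count the number of free parameters required to determine the optimal $g$ on $[-\sigma,\sigma]$.

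First, consider the generic case $k < \sigma < k + 1/2$. Theorem \ref{ExplicitOutsideDiffEqTheorem} shows that $g\vert_{I_0}$ solves a constant-coefficient linear ODE of order $2k+1$, so its solution space has dimension $2k+1$; similarly, $g\vert_{J_0}$ solves an ODE of order $2k$ and so lies in a $2k$-dimensional solution space. By Theorem \ref{GOnWholeIntervalThm}, the values of $g$ on every inner interval $I_j$ (for $1 \le j \le k$) and $J_j$ (for $1 \le j \le k-1$) are prescribed linear functionals of the derivatives of $g\vert_{I_0}$ and $g\vert_{J_0}$, and the values on the remaining inner intervals are filled in by evenness. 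Hence specifying $g\vert_{I_0}$ and $g\vert_{J_0}$ determines $g$ on all of $[-\sigma,\sigma]$, and the total number of parameters is $(2k+1) + 2k = 4k+1$.

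The second generic case, $k + 1/2 < \sigma < k + 1$, is handled identically using the primed intervals: the orders of the two ODEs from Theorem \ref{ExplicitOutsideDiffEqTheorem} are $2k+1$ on $I_0'$ and $2k+2$ on $J_0'$, and the linear reconstruction in Theorem \ref{GOnWholeIntervalThm} together with evenness determines $g$ on $[-\sigma,\sigma]$, giving $n = 4k + 3$. At the boundary $\sigma = k$, the outermost interval $I_0 = [k,k]$ collapses to a point (as noted in the text following the interval definitions) and only the $J$-system contributes a non-degenerate ODE, of order $2k$, giving $n = 2k$. At $\sigma = k + 1/2$, the two outermost intervals coincide into a single interval of length $1$ and only the ODE of order $2k + 1$ survives, giving $n = 2k + 1$.

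The one point that needs brief justification is independence: the dimensions add because $I_0$ (respectively $I_0'$) and $J_0$ (respectively $J_0'$) are disjoint open intervals, so initial data for the two ODEs may be prescribed independently, and no a priori matching is forced at this stage. The further constraints that cut down this $n$-parameter family to the unique optimal $g$---continuity at the interval boundaries and the equation $(I+K)g = 1$---are precisely the conditions that will be imposed to produce closed-form solutions, exactly as in Sections \ref{-33section} and \ref{-44section}. This is the genuine content of the reduction: the infinite-dimensional optimization over $L^2(-\sigma,\sigma)$ becomes a finite-dimensional linear problem over $\R^n$, and since $n$ grows piecewise-linearly with $\sigma$ (as $\lfloor 4\sigma \rfloor + O(1)$), this justifies the claim that the reduced problem scales piecewise-linearly with support.
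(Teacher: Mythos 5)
Your proposal is correct and follows essentially the same route as the paper: count the orders of the two ODEs from Theorem \ref{ExplicitOutsideDiffEqTheorem} on the outermost intervals, use Theorem \ref{GOnWholeIntervalThm} (plus evenness) to see that these data determine $g$ on all of $[-\sigma,\sigma]$, and note that at integer or half-integer $\sigma$ one system of intervals degenerates so only one ODE contributes. Your added remarks on independence of the two solution spaces and on the degenerate cases are slightly more explicit than the paper's proof but do not change the argument.
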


\begin{proof}
  Without initial conditions, the solution to an $m^{\textrm{th}}$ order differential equation is an $m$-parameter family of solutions. When $\sigma$ is neither an integer nor a half-integer, there are two systems to solve. Say these systems have degree $d_{1}, d_{2}$. The total number of free real parameters is at least $d_{1} + d_{2}$. Lemma \ref{GOnWholeIntervalThm} shows us that there are no more than $d_{1} + d_{2}$ free real parameters. The dimensions above are $d_{1} + d_{2}$. The dimension is lower on integers and half integers because in those cases, one of our systems of intervals is trivial. 
\end{proof}

\begin{corollary}\label{NotManyBadPointsCor}
  Within $(-\sigma,\sigma)$, the optimal $g$ has at most $m$ points of non-differentiability, where
  \begin{equation}
    m = 
    \begin{cases}
      4k + 1 \ &k < \sigma < k + 1/2 \\
      4k + 3 \ &k + 1/2 < \sigma < k + 1 \\ 
      2k - 1 \ &k = \sigma \\
      2k + 1 \ &k + 1/2 = \sigma
    \end{cases}
    \label{NumberOfBadPointsEqn}
  \end{equation}
These points are either endpoints of the $I_{i}, J_{i}, I_{i}', J_{i}'$, or zero. Within the interiors of these intervals (except possibly at zero), the optimal $g$ is real-analytic. 
\end{corollary}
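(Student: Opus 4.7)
The plan is to combine the structural results of the section with Lemma \ref{TwiceDiff} to simultaneously establish real-analyticity on each open subinterval and bound the number of non-differentiability points.

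First I would establish real-analyticity on the interior of the outermost intervals. Theorem \ref{ExplicitOutsideDiffEqTheorem} gives, on each of $I_0, J_0$ (or $I_0', J_0'$), an explicit linear ODE with constant coefficients for the optimal $g$. Any solution to such an ODE on an open interval is a finite linear combination of exponentials and polynomial-times-exponential terms, and hence is real-analytic on the interior of its domain. So $g\vert_{I_0}$ and $g\vert_{J_0}$ (resp.\ primed) are real-analytic on the relevant open intervals.

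Next I would propagate real-analyticity inward. By Theorem \ref{GOnWholeIntervalThm}, $g$ on each inner interval $I_j$ or $J_j$ is given explicitly as a finite linear combination of derivatives of $g\vert_{I_0}$ or $g\vert_{J_0}$ evaluated at translated arguments. Since differentiation and translation preserve real-analyticity, and finite linear combinations of real-analytic functions are real-analytic, $g$ is real-analytic on the interior of every interval in the decomposition.

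For the count, I would enumerate interior boundary points case by case. In the range $k < \sigma < k + 1/2$, the decomposition consists of the $2k+1$ intervals $I_0,\ldots, I_{2k}$ and the $2k$ intervals $J_0,\ldots,J_{2k-1}$, which alternate and partition $[-\sigma,\sigma]$ up to shared endpoints; this gives $4k$ interior boundary points in $(-\sigma,\sigma)$. Adding the point $0$ yields $4k+1$, matching \eqref{NumberOfBadPointsEqn}. The range $k+1/2 < \sigma < k+1$ is analogous, yielding $4k+2$ interior boundary points plus $0$, totaling $4k+3$. For the degenerate cases $\sigma=k$ and $\sigma=k+1/2$ (where one of the two interval systems collapses to finitely many points), one checks directly that the interior boundary points number $2k-1$ and $2k$ respectively, and that $0$ is already a boundary point when $\sigma=k$ (so is not counted twice) but is a strictly interior point when $\sigma=k+1/2$ (so adds one to the count).

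Finally I would explain why $0$ must be singled out. The optimal $g$ is solved for on $[0,\sigma]$ and then extended to $[-\sigma, 0]$ by evenness (Lemma \ref{evenlemma}). Even though $g$ is real-analytic on the subinterval of the decomposition containing $0$ in $(0,\sigma)$, the even extension satisfies $g'(0^{-}) = -g'(0^{+})$, so unless $g'(0^{+})=0$, the extended $g$ fails to be differentiable at $0$. This explains the qualifier \emph{except possibly at zero} in the statement.

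The main obstacle is the bookkeeping across the four cases in \eqref{NumberOfBadPointsEqn}: making certain, in the degenerate cases, that $0$ is not double-counted when it already coincides with an interior boundary point, and verifying that no additional non-smoothness can arise in the interior of any subinterval beyond what the ODE machinery already rules out.
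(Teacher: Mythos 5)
Your proposal is correct and follows essentially the same route as the paper: Theorems \ref{ExplicitOutsideDiffEqTheorem} and \ref{GOnWholeIntervalThm} give smoothness/analyticity on the interiors of the subintervals, the possible failure at $0$ comes from the absolute value in the even extension, and the count is just the number of interval endpoints interior to $(-\sigma,\sigma)$ plus (when needed) the point $0$. The only cosmetic difference is that you justify real-analyticity via the explicit exponential/trigonometric form of solutions to the constant-coefficient ODEs and its propagation inward, whereas the paper simply cites the Cauchy--Kowalevski theorem; your case-by-case endpoint count, including the degenerate cases $\sigma = k$ and $\sigma = k+1/2$, agrees with the paper's compressed counting formula.
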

\begin{proof}
  Theorems \ref{ExplicitOutsideDiffEqTheorem} and \ref{GOnWholeIntervalThm} establish that, with the exception of an interval containing zero, the optimal $g$ is completely differentiable in the interior of each of the intervals. Because of the absolute value introduced to make $g$ even (as seen in Theorem \ref{GOnWholeIntervalThm}), 0 may also be a point of non-differentiability. \\

  We can count righthand endpoints of intervals. The $m$ above are generated by 
  \[ m =(\# \textrm{intervals in system} - 1) + 1 \] 
  if zero is not a righthand endpoint of an interval in the system. If it is (which occurs precisely when $\sigma$ is an integer), we use the formula 
  \[ m = (\# \textrm{intervals in system} - 1).\] 
  Real analyticity of $g$ follows from the  Cauchy-Kowalevski Theorem (see \cite{Wal}).
\end{proof}

%
%

\newpage

\section{Extension to $\textrm{support}(\hat{\phi}) \subset [-4,4]$} \label{-44section}
When $1.5 < \sigma < 2$, the intervals of importance are
\begin{equation}
	\begin{aligned}
		J_0' \ &= \ [3 - \sigma, \sigma] \\
		J_1' \ &= \ [2 - \sigma, \sigma - 1] 
	\end{aligned}
	\label{JPrime-44Intervals}
\end{equation}
and 
\begin{equation}
	\begin{aligned}
		I_0' \ &= \ [\sigma - 1, 3 - \sigma] \\
		I_1' \ &= \ [0, 2 - \sigma] 
	\end{aligned}
	\label{IPrime-44Intervals}
\end{equation}

Theorem \ref{ExplicitOutsideDiffEqTheorem} tells us that on $J_0'$, the optimal $g$ (for all three cases, $\textrm{SO(Even), SO(Odd), Sp}$) is described by a fourth degree ODE and on $I_0'$, it is described by a third degree ODE. On $J_0'$, this ODE (for each of the three groups) is 
\begin{equation}
	g^{(4)}(x) + \frac{3}{4} g^{(2)}(x) + \frac{1}{16} g(x) \ = \ 0 
	\label{J0PrimeODE}
\end{equation}
and on $I_0'$, the ODE is 
\begin{equation}
	g^{(3)}(x) + \frac{1}{2} g'(x) \ = \ 0. 
	\label{I0PrimeODE}
\end{equation}
\\

Our first task is to reduce the size of this dimension seven problem. Note first that the ODE \eqref{I0PrimeODE} is the same as the ODE that describes the optimal $g$ on $I_0$ in the cases $1 < \sigma < 1.5$. We write the solution to this ODE as 
\[ 
	\frac{c_{1,\mathcal{G},\sigma}}{\sqrt{2}} \sin\left( \frac{x -1}{\sqrt{2}} \right) +  \frac{c_{2,\mathcal{G},\sigma}}{\sqrt{2}} \cos \left( \frac{x -1}{\sqrt{2}} \right)  + c_{3,\mathcal{G},\sigma} 
\]
and we observed that in fact $c_2$ was always zero. Though we did not need to do so when $1 < \sigma < 1.5$, we will show in this case that in fact $c_2$ is necessarily zero. 

\begin{lemma}
	For $\mathcal{G} = \textrm{SO(Even), SO(Odd), or Sp}$ and $1.5 < \sigma < 2$, on $I_0'$, the optimal $g$ is of the form 
	\[ 
		\frac{c_{1,\mathcal{G},\sigma}}{\sqrt{2}} \sin\left( \frac{x -1}{\sqrt{2}} \right) + c_{3,\mathcal{G},\sigma} 
	\]
	for real constants $c_{1,\mathcal{G},\sigma}$ and $c_{3,\mathcal{G},\sigma}$, i.e. $c_{2,\mathcal{G},\sigma} = 0$. 
	\label{c2alwayszero}
\end{lemma}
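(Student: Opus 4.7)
The plan is to exploit evenness of the optimal $g$ (Lemma \ref{evenlemma}) to force a derivative constraint at the origin and then compare that constraint against the explicit form of $g$ on the adjacent inner interval $I_1' = [0, 2-\sigma]$ inherited from the form on $I_0'$. Theorem \ref{ExplicitOutsideDiffEqTheorem}, applied with $k=1$ in the range $k+1/2 \le \sigma < k+1$, says that on $I_0'$ the optimal $g$ solves $g^{(3)}(x) + \tfrac{1}{2} g'(x) = 0$, whose general solution is exactly
\[ g|_{I_0'}(x) \ = \ \frac{c_1}{\sqrt 2} \sin\!\left(\frac{x-1}{\sqrt 2}\right) + \frac{c_2}{\sqrt 2} \cos\!\left(\frac{x-1}{\sqrt 2}\right) + c_3, \]
so the goal reduces to showing the middle coefficient vanishes.

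Next I would invoke equation \eqref{Case1OptGOnIjPrime} of Theorem \ref{GOnWholeIntervalThm} at interval index $j = 1$; the binomial sum has only the $\binom{1}{0}$ term surviving, yielding for $x \in I_1'$
\[ g(x) \ = \ \beta_{\mathcal{G}}^{-1} \, g'|_{I_0'}(|x| + 1) \ = \ \beta_{\mathcal{G}}^{-1}\!\left[\frac{c_1}{2} \cos\!\left(\frac{|x|}{\sqrt 2}\right) - \frac{c_2}{2} \sin\!\left(\frac{|x|}{\sqrt 2}\right)\right]. \]
The sine piece is the obstruction to $g$ being $C^1$ across the origin: the even reflection $\sin(|x|/\sqrt 2)$ has a corner at $x = 0$ unless its coefficient vanishes.

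To rule out that corner I would first verify $g \in C^1$ on a neighbourhood of $0$. For $|x| < 2-\sigma$ the convolution support $[x-1, x+1]$ lies inside $[-\sigma, \sigma]$, so the optimality criterion \eqref{eq:0} reduces to $g(x) + \beta_{\mathcal{G}} \int_{x-1}^{x+1} g(y)\,dy + \text{const} = 1$. Since $g$ is Lipschitz by Lemma \ref{LipschitzLemma}, the integral on the right is a $C^1$ function of $x$, and therefore so is $g$ on the open set $(-(2-\sigma),\, 2-\sigma)$, which contains $0$. Differentiating under the integral sign yields $g'(x) + \beta_{\mathcal{G}} g(x+1) - \beta_{\mathcal{G}} g(x-1) = 0$; evaluating at $x=0$ and using $g(-1) = g(1)$ from evenness forces $g'(0) = 0$.

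The final step is arithmetic: the right derivative of the explicit $I_1'$-formula at $x = 0^+$ equals $-c_2/(2\sqrt 2\, \beta_{\mathcal{G}})$, so equating with $g'(0) = 0$ gives $c_2 = 0$. The one delicate point in this plan is the $C^1$ upgrade of $g$ on a neighbourhood of $0$, which is what turns the a.e.\ differentiation of the integral equation into a genuine pointwise identity at $x=0$; once this is granted, the rest is direct substitution into the formulas above.
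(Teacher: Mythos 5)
Your proposal is correct and follows essentially the same route as the paper: both arguments exploit evenness of the optimal $g$ together with the differentiated optimality criterion on the inner interval $I_1'$ and the inherited formula $g\vert_{I_1'}(x)=\beta_{\mathcal{G}}^{-1}\left(\tfrac{c_1}{2}\cos\left(x/\sqrt{2}\right)-\tfrac{c_2}{2}\sin\left(x/\sqrt{2}\right)\right)$. The only difference is the finish: the paper substitutes $g(x-1)=g(1-x)$ to obtain a trigonometric identity valid on all of $I_1'$ and kills $c_2$ by linear independence of $\sin\left(x/\sqrt{2}\right)$ and $\cos\left(x/\sqrt{2}\right)$, whereas you evaluate the same relation at the single point $x=0$ (after your correct $C^1$-at-the-origin upgrade) to get $g'(0)=0$ and hence $c_2=0$ -- a harmless, slightly more economical variant of the same argument.
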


\begin{center}
		From here on, we omit all subscripts for notational clarity. 
	\end{center}

\begin{proof}
	This proof uses the symmetry that the optimal $g$ must be even. For $x \in I_1$, differentiating 
	\begin{equation}
		(I + K)(g) \ = \ 1 
		\label{IntConditionAgan}
	\end{equation}
	yields 
	\[ 
		g'(x) - \beta g(x-1) +  \beta g(x+1) \ = \ 0
	\]
	which becomes 
	\begin{equation}
		g'(x) -  \beta g(1-x) + \beta g(x+1) \ = \ 0 
		\label{SymmetryExpressionEliminateC2}
	\end{equation}
	because $g$ is even. \\

	In $I_1$ (or $I_1'$), we can compute $g$ based on \eqref{IntConditionAgan}, giving 
	\[ 
		g(x) \vert_{I_0} \ = \ \beta^{-1} \left( \frac{c_1}{2} \cos \left( \frac{x}{\sqrt{2}} \right) - \frac{c_2}{2} \sin \left( \frac{x}{\sqrt{2}} \right) \right)
	\]
	and we can write \eqref{SymmetryExpressionEliminateC2} as 
	\[ 
		\frac{c_1}{\sqrt{2}} \sin \left( \frac{x}{\sqrt{2}} \right) \left( 2 \beta - (2 \beta)^{-1} \right)  - \frac{c_2}{2 \beta \sqrt{2}} \cos \left( \frac{x}{\sqrt{2}} \right) \ = \ 0. 
	\]
	We note that this implies $c_2 = 0$, but $c_1$ is not necessarily zero, since $2\beta - (2 \beta)^{-1}$ is zero for either possible $\beta$, namely $\pm 1/2$. 
\end{proof}

On $J_0'$, our optimal $g$ is described by 
\begin{align*}
	&c_4 \sin \left(\frac{1}{2} \sqrt{\frac{1}{2}
   \left(3+\sqrt{5}\right)} (x-1)\right) +  
   c_5 \cos \left(\frac{1}{2} \sqrt{\frac{1}{2}\left(3+\sqrt{5}\right)} (x-1)\right) + \\ 
   &c_6 \sin \left(\frac{1}{2} \sqrt{\frac{1}{2}
   \left(3-\sqrt{5}\right)} (x-1)\right) + c_7 \cos \left(\frac{1}{2} \sqrt{\frac{1}{2}
   \left(3-\sqrt{5}\right)} (x-1)\right).
\end{align*}

Letting
\begin{align*}
	\alpha_1 \ &= \ \frac{1}{2} \sqrt{\frac{1}{2} \left( 3 + \sqrt{5} \right)} \\
	\alpha_2 \ &= \ \frac{1}{2} \sqrt{\frac{1}{2} \left( 3 - \sqrt{5} \right)}
\end{align*}
we shorten the above to 
\begin{equation}
	g(x) \vert_{J_0'} \ = \ c_4 \sin (\alpha_1(x-1)) + c_5 \cos(\alpha_1(x-1)) + c_6 \sin (\alpha_2(x-1)) + c_7 \cos (\alpha_2 (x-1)). 
	\label{Revised4ParamFamily}
\end{equation}

As above, symmetry lets us establish: 
\begin{lemma}
	For $\mathcal{G} = \textrm{SO(Even), SO(Odd), or Sp}$ and $1.5 < \sigma < 2$, on $J_0'$, the optimal $g$, described by the family \eqref{Revised4ParamFamily}, satisfies 
	\begin{equation}
		\begin{aligned}
			c_5 \ &= \ c_4 \frac{\left( -\frac{\alpha_1^2}{\beta} + \beta - \alpha_1 \sin (\alpha_1) \right)}{\alpha_1 \cos \alpha_1} \\
			c_7 \ &= \ c_6	\frac{\left( -\frac{\alpha_2^2}{\beta} + \beta - \alpha_2 \sin (\alpha_2) \right)}{\alpha_2 \cos \alpha_2} 
		\end{aligned}
		\label{RelationsAmongc5c7}
	\end{equation}
	\label{Relationsc5c7Lemma}
	here $\beta$ is the same constant as before, 1/2 for $\mathcal{G} = \textrm{SO(Even)}, -1/2$ for $\mathcal{G} = \textrm{SO(Odd) or Sp}$. 
\end{lemma}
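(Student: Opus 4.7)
The plan is to mirror the strategy of Lemma~\ref{c2alwayszero}: use a delay-differential equation on the inner interval $J_1'$, exploit evenness of $g$ to trade a $g(y-1)$ term for a $g(1-y)$ term that still lives in $J_1'$, and then match coefficients against the four linearly independent functions $\sin(\alpha_i y),\cos(\alpha_i y)$, $i=1,2$. Because $\alpha_1\ne\alpha_2$, the matching decouples into an independent pair of relations for $(c_4,c_5)$ and for $(c_6,c_7)$, each of which collapses to the form in~\eqref{RelationsAmongc5c7}.

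First I would express $g|_{J_1'}$ in closed form in terms of $c_4,c_5,c_6,c_7$. Equation~\eqref{second1b}, which reads $g'(y)-\beta g(y-1)=0$ on $J_0'$, becomes under the shift $y\mapsto x+1$ the identity $g|_{J_1'}(x)=\beta^{-1}g'|_{J_0'}(x+1)$. Differentiating~\eqref{Revised4ParamFamily} and substituting gives
\[
g|_{J_1'}(x)\ =\ \beta^{-1}\bigl[c_4\alpha_1\cos(\alpha_1 x)-c_5\alpha_1\sin(\alpha_1 x)+c_6\alpha_2\cos(\alpha_2 x)-c_7\alpha_2\sin(\alpha_2 x)\bigr].
\]
Next I would invoke equation~\eqref{second2b} under the same shift, which reads $g'(y)+\beta g(y+1)-\beta g(y-1)=0$ for $y\in J_1'$. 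Since $J_1'=[2-\sigma,\sigma-1]$ satisfies $1-J_1'=J_1'$, evenness of $g$ allows me to replace $g(y-1)$ by $g(1-y)$, where $1-y\in J_1'$ and is therefore given by the formula above, while $g(y+1)$ is read off directly from \eqref{Revised4ParamFamily}. Expanding $\cos(\alpha_i(1-y))$ and $\sin(\alpha_i(1-y))$ by the angle-sum identities and collecting coefficients of $\sin(\alpha_i y)$ and $\cos(\alpha_i y)$ then yields the two blocks that produce \eqref{RelationsAmongc5c7}.

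The main obstacle is consistency. Each decoupled block at first glance supplies \emph{two} linear relations between its pair of coefficients — one from the $\sin(\alpha_i\cdot)$-coefficient and one from the $\cos(\alpha_i\cdot)$-coefficient — which would over-determine the system unless the two are proportional. I would verify that this proportionality is equivalent to $\bigl(-\alpha_i^{2}/\beta+\beta\bigr)^{2}=\alpha_i^{2}$, which upon using $\beta^{2}=1/4$ rearranges to $\alpha_i^{4}-\tfrac34\alpha_i^{2}+\tfrac1{16}=0$ — exactly the characteristic equation of~\eqref{J0PrimeODE}, hence automatically satisfied by $\alpha_1^{2}$ and $\alpha_2^{2}$ by construction. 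This is the same rank-drop phenomenon behind~\eqref{ZeroDeterminant} from the earlier $\sigma\le 1$ analysis; once it is checked, the single surviving relation in each block is precisely~\eqref{RelationsAmongc5c7}.
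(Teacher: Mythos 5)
Your proposal is correct and follows essentially the same route as the paper: express $g\vert_{J_1'}$ from the edge delay equation $g'(y)=\beta g(y-1)$ on $J_0'$, impose $g'(x)-\beta g(1-x)+\beta g(x+1)=0$ on $J_1'$ using evenness and $1-J_1'=J_1'$, and match coefficients of the four independent trigonometric functions, which decouples into two $2\times 2$ blocks whose single surviving relations are \eqref{RelationsAmongc5c7}. Your explicit verification that each block is singular because $\alpha_i^2$ satisfies the characteristic polynomial of \eqref{J0PrimeODE} is precisely the rank-drop the paper asserts with ``it turns out the matrix has rank two.''
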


\begin{proof}
	This proof is similar to the proof of Lemma \ref{c2alwayszero}. We use the symmetry of $g$ and the optimality criterion $(I + K)(g) = 1$ to deduce the result. \\
	
	Differentiating $(I + K)(g) = 1$ shows that on $J_1'$, the optimal $g$ is given by 
	\begin{equation}
		\beta^{-1}\left( \alpha_1 c_4 \cos(\alpha_1 x) - \alpha_1 c_5 \sin(\alpha_1 x) + \alpha_2 c_6 \cos(\alpha_2 x) - \alpha_2 c_7 \sin (\alpha_2 x) \right)
		\label{gonJ_1Prime}.
	\end{equation}
	Again, for $x \in J_1'$, \eqref{SymmetryExpressionEliminateC2} holds, only this time $1 - x \in J_1'$. We compute 
	\begin{align*}
		g'(x) \ &= \ \beta^{-1}(-c_4 \alpha_1^2 \sin (\alpha_1 x) - c_5 \alpha_1^2 \cos (\alpha_1 x) - c_6 \alpha_2^2 \sin (\alpha_2 x) - c_7 \alpha_2^2 \cos (\alpha_2 x)) \\
		- \beta g(1 - x) \ &= \ (-1) (c_4 \alpha_1 \cos (\alpha_1 - \alpha_1 x) -  c_5 \alpha_1 \sin (\alpha_1 - \alpha_1 x) + c_6 \alpha_2 \cos (\alpha_2 - \alpha_2 x) -  c_7 \alpha_2 (\alpha_2 - \alpha_2 x)) \\
		\beta g(x + 1) \ &= \ \beta \left( c_4 \sin(\alpha_1 x) + c_5 \cos(\alpha_1 x) + c_6 \sin (\alpha_2 x) + c_7 \cos (\alpha_2 x) \right).
	\end{align*}

	After applying the angle addition formulas and grouping terms, we arrive at 
	\[ 
		\gamma_1 \sin (\alpha_1 x) + \gamma_2 \cos(\alpha_1 x) + \gamma_3 \sin(\alpha_2 x) + \gamma_4 \cos(\alpha_2 x) = 0.  
	\]

	However, the Wronskian of the equation \eqref{J0PrimeODE} is one, hence all $\gamma_i$ must be zero. The $\gamma_i$ are given by 
	\begin{align*}
		\gamma_1 \ &= \ c_4\left( -\frac{\alpha_1^2}{\beta} + \beta - \alpha_1 \sin (\alpha_1) \right) - c_5(\alpha_1 \cos (\alpha_1)) \\
		\gamma_2 \ &= \ c_5 \left(-\frac{\alpha_1^2}{\beta} + \beta + \alpha_1 \sin(\alpha_1)\right) - c_4 (\alpha_1 \cos (\alpha_1)) \\
		\gamma_3 \ &= \ c_6 \left( - \frac{\alpha_2^2}{\beta} + \beta - \alpha_2 \sin (\alpha_2) \right) - c_7 (\alpha_2 \cos (\alpha_2)) \\
		\gamma_4 \ &= \ c_7 \left( -\frac{\alpha_2^2}{\beta} + \beta + \alpha_2 \sin(\alpha_2) \right) - c_6 (\alpha_2 \cos(\alpha_2))
	\end{align*}
	and it turns out the matrix 
	\[ 
		\begin{pmatrix}
			-\frac{\alpha_1^2}{\beta} + \beta - \alpha_1 \sin (\alpha_1) & -(\alpha_1 \cos (\alpha_1)) & 0 & 0 \\
			-(\alpha_1 \cos (\alpha_1)) & -\frac{\alpha_1^2}{\beta} + \beta + \alpha_1 \sin(\alpha_1) & 0 & 0 \\
			0 & 0 & -\frac{\alpha_2^2}{\beta} + \beta - \alpha_2 \sin (\alpha_2) & -(\alpha_2 \cos (\alpha_2)) \\
			0 & 0 & -(\alpha_2 \cos (\alpha_2)) & -\frac{\alpha_2^2}{\beta} + \beta + \alpha_2 \sin(\alpha_2)
		\end{pmatrix}
	\]
	has rank two, so each block has rank one and $\gamma_1 = 0$ precisely when $\gamma_2 = 0$ and $\gamma_3 = 0$ precisely when $\gamma_4 = 0$. Solving $\gamma_1 = 0$ and $\gamma_3 = 0$ gives the result. 
\end{proof}

We have now reduced the seven-dimensional problem to a four-dimensional one. We have a piecewise description of the optimal $g$ as a function of four free parameters. Namely, 
\begin{align*}
	g\vert_{I_1'} \ &:= \ f_1(x,c_1) \\
	g\vert_{J_1'}	\ &:= \ f_2(x,c_4,c_6) \\
	g\vert_{I_0'} \ &:= \ f_3(x,c_1,c_3) \\
	g\vert_{J_0'} \ &:= \ f_4(x,c_4,c_6) 
\end{align*}

As in the cases $1 < \sigma < 1.5$, we solve for $c_1, c_3, c_4$, and $c_6$ by imposing necessary conditions on $g$ via four linear equations. In particular, for all groups these equations are: 
\begin{equation}
	\begin{aligned}
		f_1(2-\sigma,c_1) \ &= \ f_2(2-\sigma,c_4,c_6) \\
		f_2(\sigma - 1,c_4,c_6) \ &= \ f_3(\sigma - 1,c_1,c_3) \\
		f_3(3 - \sigma,c_1,c_3) \ &= \ f_4(3 - \sigma, c_4,c_6) \\
		(I + K)(g)(0) \ &= \ 1.
	\end{aligned}
	\label{EquationsToReduceProblem}
\end{equation}

The first three incorporate the requirement that the optimal $g$ is continuous. The final equations is the optimality condition. Neither the matrix nor its determinant is practical to write down. Here is a plot of the determinants for the groups SO(Even) and Sp (the Sp equations can be used to find the optimal function for SO(Odd) case).  

\begin{figure}[h]
    \centering
    \begin{subfigure}[b]{0.3\textwidth}
        \includegraphics[width=\textwidth]{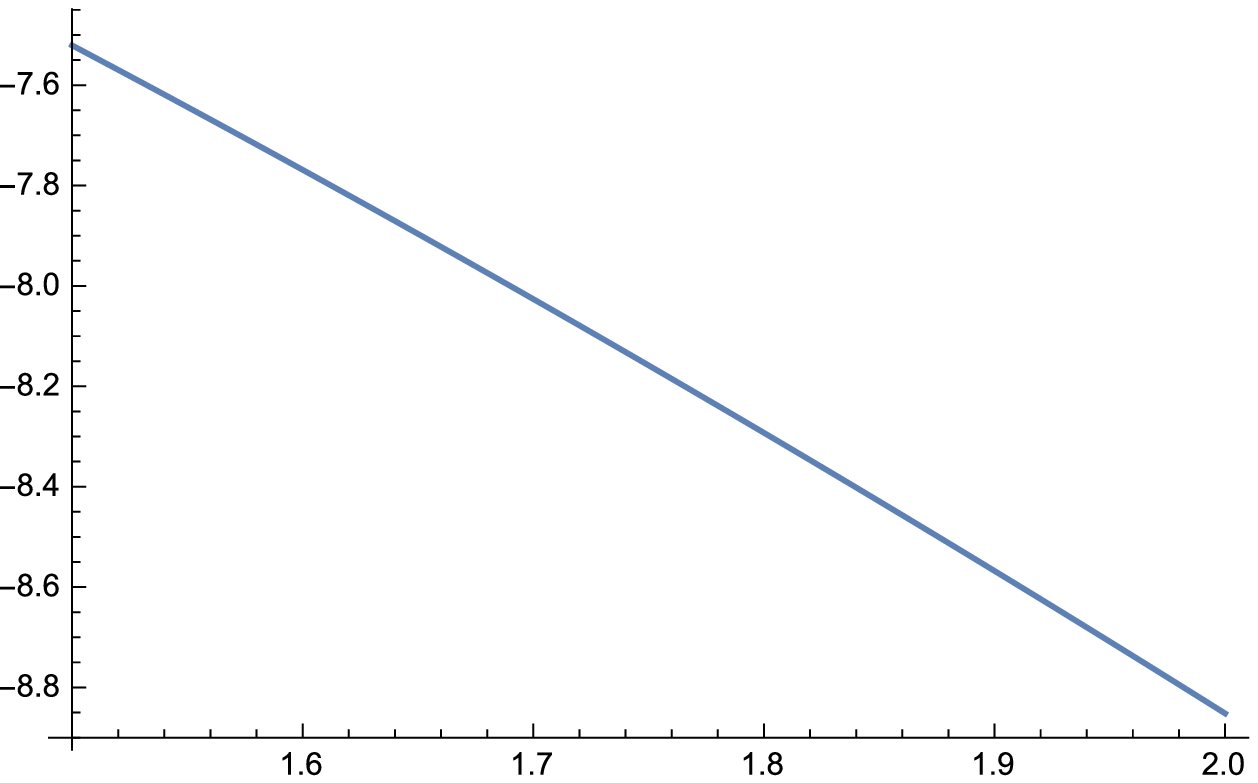}
        \caption{SO(Even)}
        \label{fig:soeven44det}
    \end{subfigure} \qquad \qquad \qquad 
    ~ 
    \begin{subfigure}[b]{0.3\textwidth}
        \includegraphics[width=\textwidth]{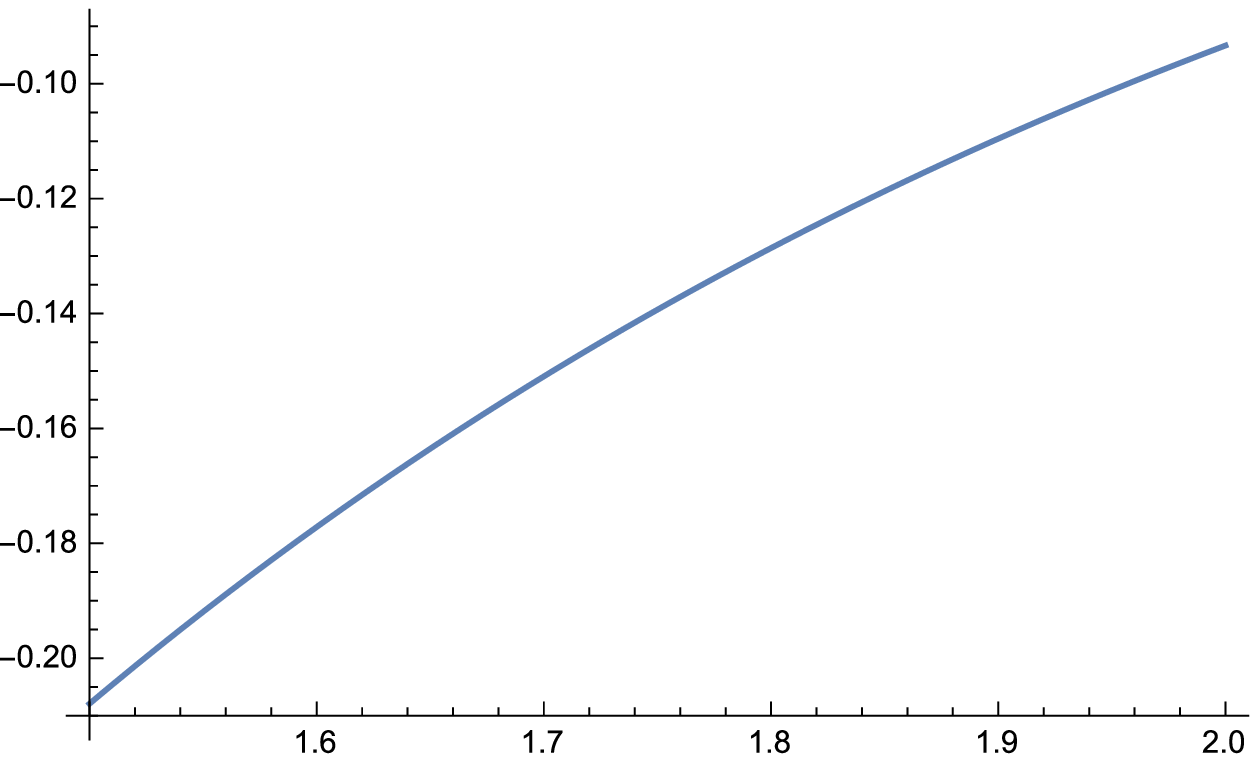}
        \caption{Sp}
        \label{fig:sp44det}
    \end{subfigure}
    ~ 
    \caption{Determinants of the matrices described by \eqref{EquationsToReduceProblem}}\label{fig:44dets}
\end{figure}

Therefore, the equations \eqref{EquationsToReduceProblem} specify unique values of $c_1,c_3,c_4$, and $c_6$, which of depend only on $\sigma$. \\

Again, the coefficients are unwieldy. So is the new infimum/bound on average rank they produce.  Though they will be available on an arXiv version of this paper, we will only reproduce plots of the optimal test functions for $\sigma = 1.7$ and plots of the infimum compared to the na\"{i}ve (though in fact quite close to optimal!) estimate in \cite{ILS}. \\

The optimal $g$ for $\sigma = 1.7$ are 

\begin{figure}[h]
    \centering
    \begin{subfigure}[b]{0.3\textwidth}
        \includegraphics[width=\textwidth]{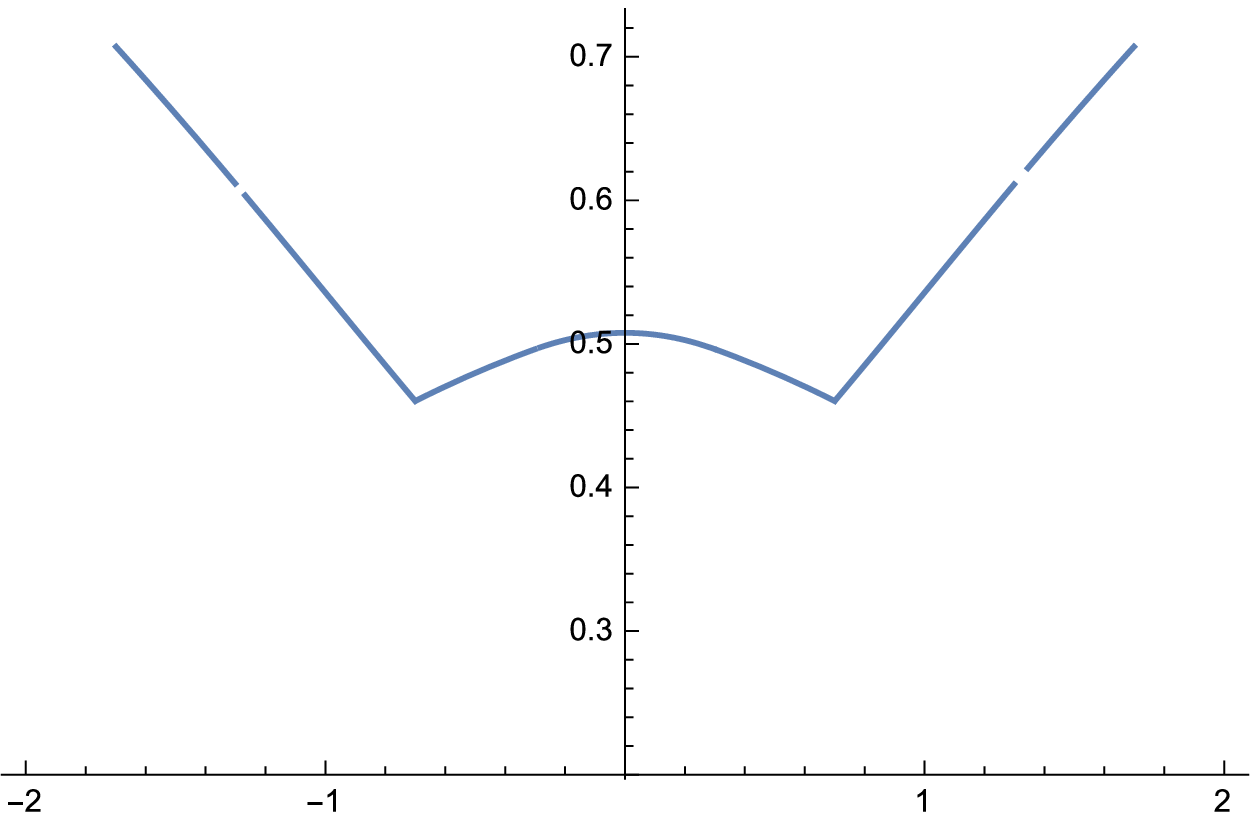}
        \caption{SO(Even)}
        \label{fig:soevenoptg1.7}
    \end{subfigure} \qquad  
    ~ 
    \begin{subfigure}[b]{0.3\textwidth}
        \includegraphics[width=\textwidth]{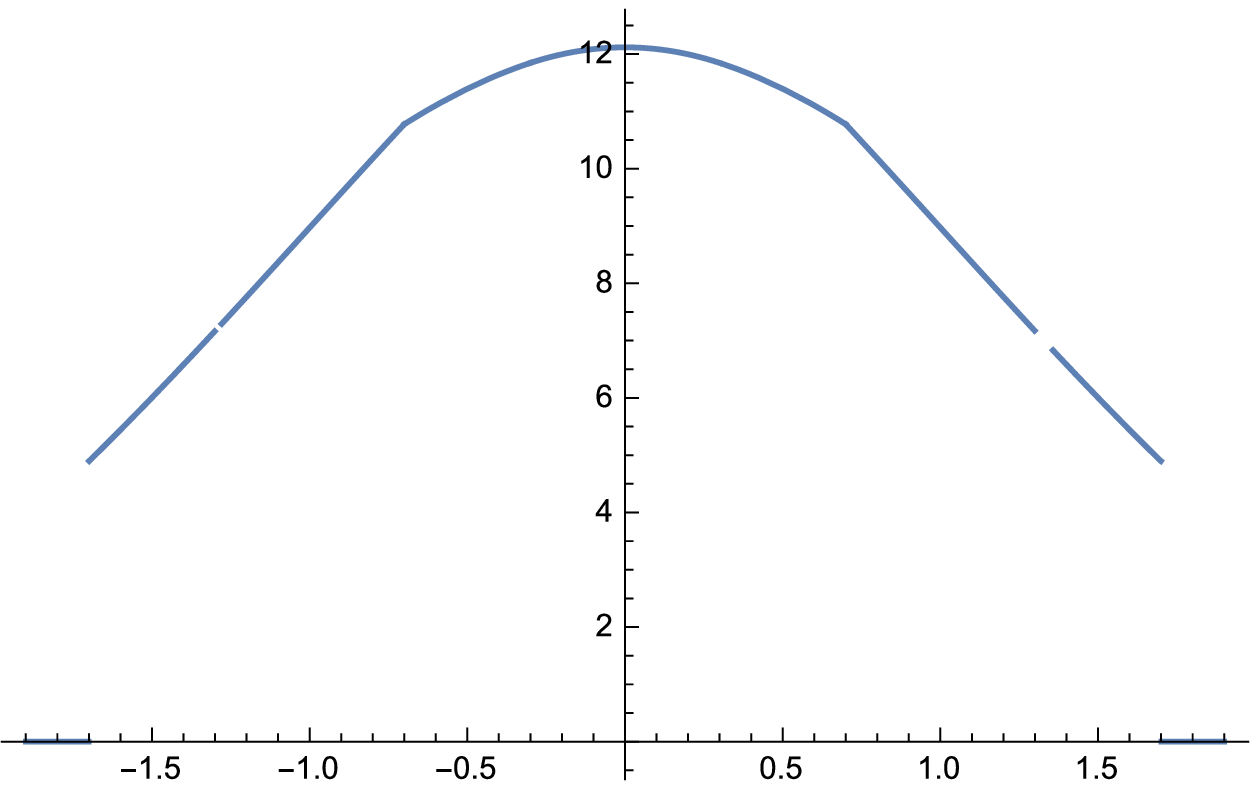}
        \caption{Sp}
        \label{fig:spoptg1.7}
    \end{subfigure} \qquad 
    \begin{subfigure}[b]{0.3\textwidth}
        \includegraphics[width=\textwidth]{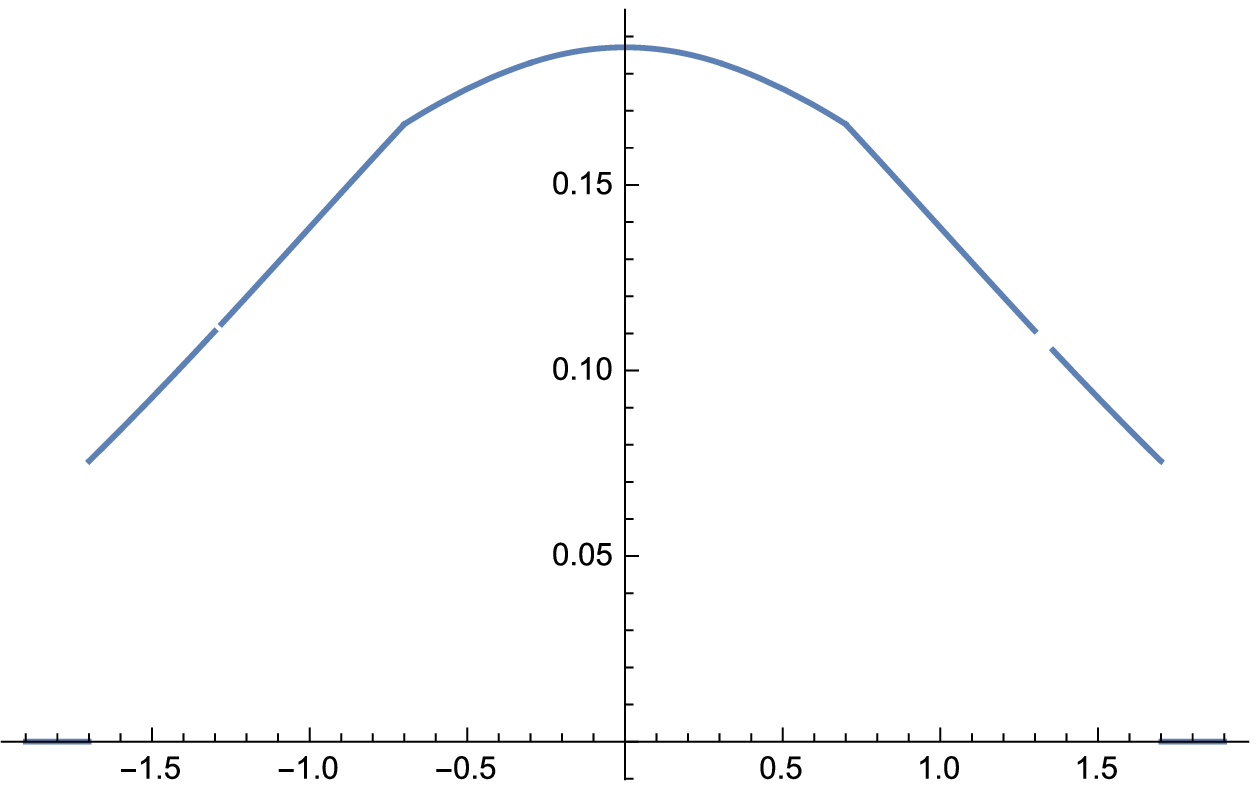}
        \caption{SO(Odd)}
        \label{fig:sooddoptg1.7}
    \end{subfigure}
    ~ 
    \caption{Optimal Test Functions for $\sigma = 1.7$}\label{fig:44optfunctions}
\end{figure}

and the associated infima, compared to the estimates in \cite{ILS}, are 

\begin{figure}[H]
    \centering
    \begin{subfigure}[b]{0.3\textwidth}
        \includegraphics[width=\textwidth]{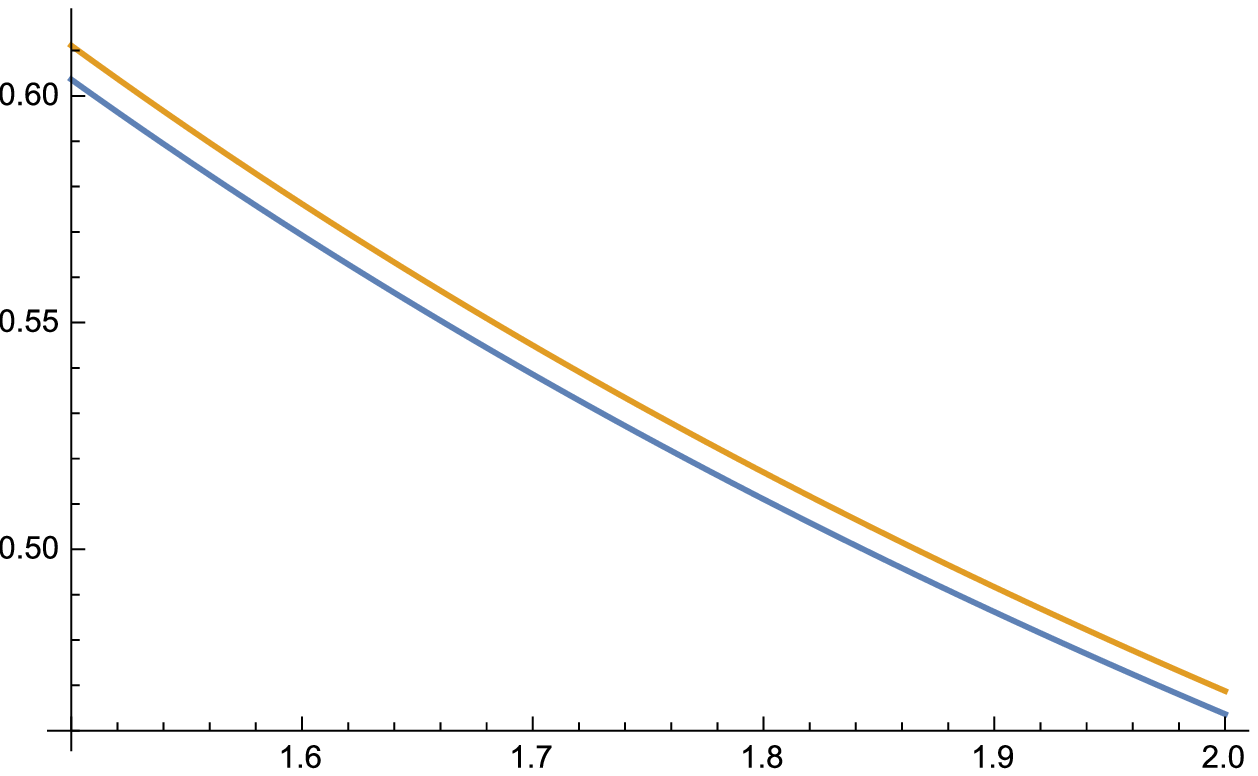}
        \caption{SO(Even)}
	\label{fig:soevencomparison44}
    \end{subfigure} \qquad  
    ~ 
    \begin{subfigure}[b]{0.3\textwidth}
        \includegraphics[width=\textwidth]{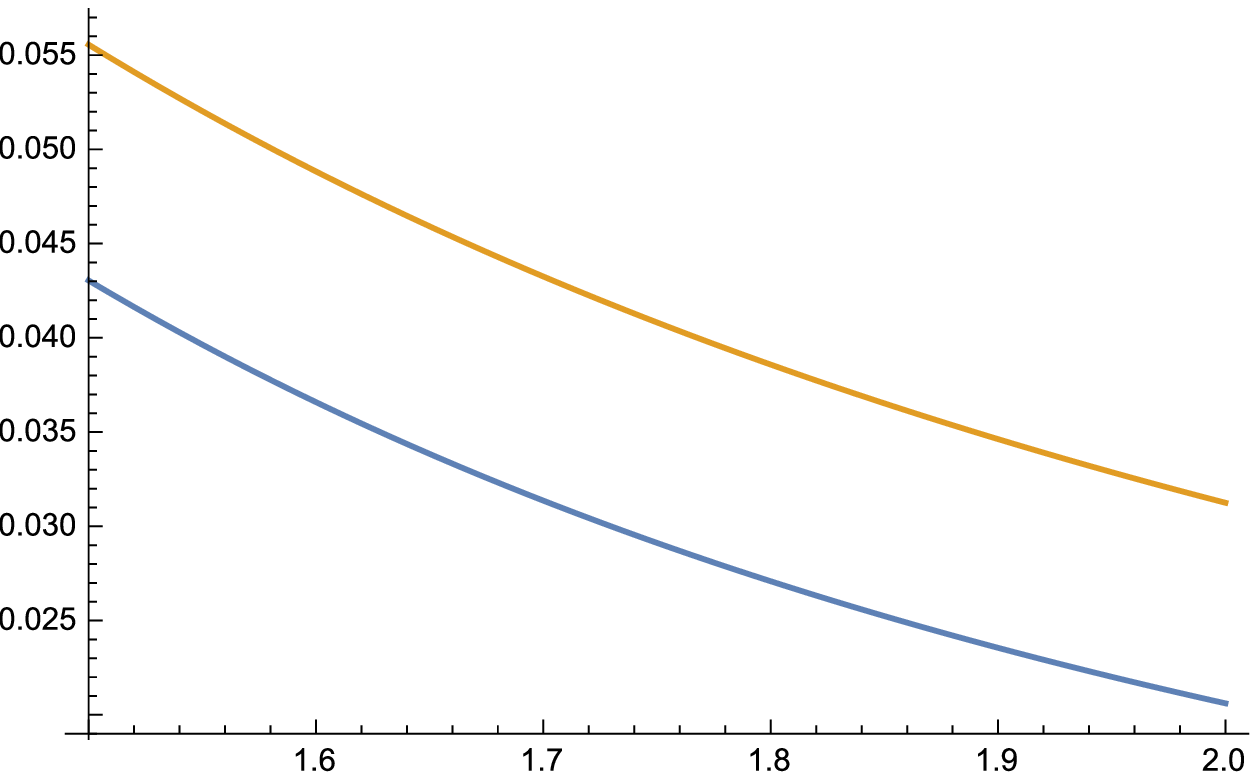}
        \caption{Sp}
        \label{fig:spcomparison-44}
    \end{subfigure} \qquad 
    \begin{subfigure}[b]{0.3\textwidth}
        \includegraphics[width=\textwidth]{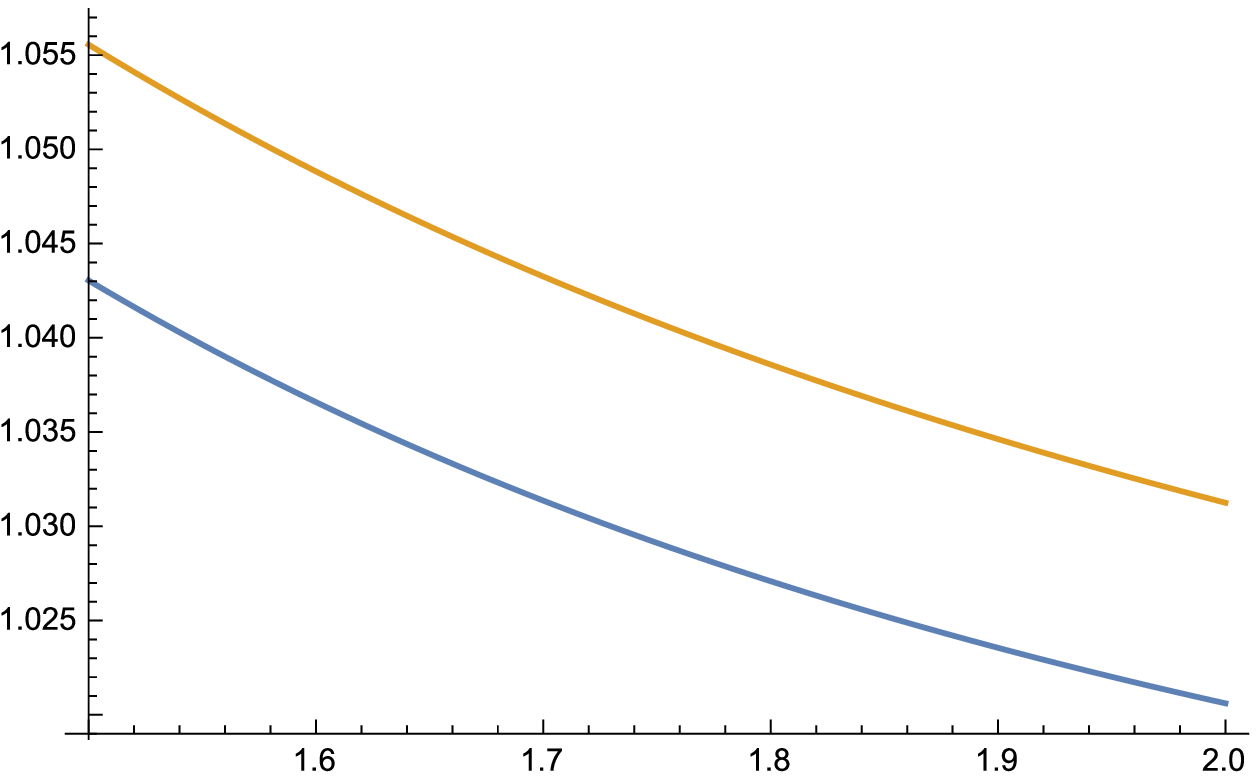}
        \caption{SO(Odd)}
        \label{fig:sooddcomparison-44}
    \end{subfigure}
    ~ 
    \caption{Infima of the functional \ref{OperatorForm}, compared to the na\"{i}ve test functions from \cite{ILS}}. \label{fig:44infima}
\end{figure}

\newpage

\section{Asymptotics For the One-Level Infimum}
For each group, $\mathcal{G}$ and each $\sigma$, we may define 
\begin{equation}
  \mathfrak{I}_{\mathcal{G}}(\sigma) \ := \ \inf_{\phi} \frac{\int_{-\infty}^{\infty} \phi(x) W_{\mathcal{G}}(x) \ dx}{\phi(0)}
  \label{InfFunctionDef}
\end{equation}
where we require $\textrm{support}(\hat{\phi}) \subset [-2\sigma,2\sigma]$. So, $\mathfrak{I}(\sigma): \R \longrightarrow \R$. This section is devoted to proving facts about $\mathfrak{I}_{\mathcal{G}}(\sigma)$. A first result about these asymptotics is proven in \cite{ILS}. They use the na\"{i}ve Fourier pair \eqref{NaiveFourierPair}, given by 
\[ \phi(x) = \left( \frac{\sin(2\sigma \pi x)}{2 \sigma \pi x} \right)^{2} \quad \quad  \widehat{\phi}(y) = \frac{1}{2\sigma}\left( 1 - \frac{|y|}{2\sigma} \right) \ \quad \textrm{if} \ |y| < 2 \sigma
\] 
to explicitly compute a value of 
\[ \frac{\int_{-\infty}^{\infty} \phi(x)W(x) dx}{\phi(0)}. \] 
This then provides an upper bound on $\mathfrak{I}_{\mathcal{G}}(\sigma)$. The result they obtain is: 
\begin{align}
  \mathfrak{I}_{\mathcal{G}}(s) \ &\le \ \frac{1}{s} + \frac{1}{2} \ &\mathcal{G} = O \label{OrthogAsympBound} \\
  \mathfrak{I}_{\mathcal{G}}(s) \ &\le \ 
  \begin{cases}
    \frac{1}{s} + \frac{1}{2} \ & s \le 1 \\
    \frac{2}{s} - \frac{1}{(s)^{2}} \ & s \ge 1
  \end{cases}
  &\mathcal{G} = \textrm{SO(Even)} \label{SOEvenAsympBound} \\
  \mathfrak{I}_{\mathcal{G}}(s) \ &\le \
  \begin{cases}
    \frac{1}{s} + \frac{1}{2} \ & s \le 1 \\
    1 + \frac{1}{(s)^{2}} \ & s \ge 1 
  \end{cases} 
  &\mathcal{G} = \textrm{SO(Odd)} \label{SOOddAsmypBound}\\
  \mathfrak{I}_{\mathcal{G}}(s) \ &\le \ 
  \begin{cases}
    \frac{1}{s} - \frac{1}{2} \ & s \le 1 \\
    \frac{1}{(s)^{2}} \ & s \ge 1 
  \end{cases} 
  &\mathcal{G} = \textrm{Sp}
  \label{SpAsympBound}
\end{align}
where $s:= 2\sigma$. \\

In each case, there is also a lower bound on $\lim_{s \to \infty} \mathfrak{I}_{\mathcal{G}}(\sigma)$. 

\begin{proposition}
	We have 
	\begin{equation}
		\lim_{\sigma \to \infty} \mathfrak{I}_{\mathcal{G}}(\sigma) \ = \ 
		\begin{cases}
			1/2 & \mathcal{G} = O \\
			0 & \mathcal{G} = \textrm{SO(Even), Sp} \\
			1 & \mathcal{G} = \textrm{SO(Odd)}
		\end{cases}
		\label{AsymptoticBoundsEqn}
	\end{equation}
	\label{AsympBoundsProp}
\end{proposition}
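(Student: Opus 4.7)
The plan is to prove matching upper and lower bounds for $\limsup_{\sigma\to\infty}\mathfrak{I}_{\mathcal{G}}(\sigma)$ and $\liminf_{\sigma\to\infty}\mathfrak{I}_{\mathcal{G}}(\sigma)$. The upper bounds come for free from the explicit estimates \eqref{OrthogAsympBound}--\eqref{SpAsympBound} obtained by plugging the na\"ive Fourier pair \eqref{NaiveFourierPair} into the functional: these give
$\mathfrak{I}_{O}(\sigma)\le 1/s+1/2$, $\mathfrak{I}_{\textrm{SO(Even)}}(\sigma)\le 2/s-1/s^{2}$, $\mathfrak{I}_{\textrm{SO(Odd)}}(\sigma)\le 1+1/s^{2}$, and $\mathfrak{I}_{\textrm{Sp}}(\sigma)\le 1/s^{2}$ (with $s=2\sigma$), so letting $s\to\infty$ yields the required $\limsup$ bounds $1/2$, $0$, $1$, $0$ respectively.

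For the lower bounds, the main point is that the continuous parts of each weight function in \eqref{GroupWeightFunctions} are pointwise nonnegative. Indeed, since $|\sin(2\pi x)/(2\pi x)|\le 1$ for all real $x$, we have $1\pm \sin(2\pi x)/(2\pi x)\ge 0$, which handles all four continuous parts simultaneously. Combined with the admissibility requirement $\phi\ge 0$, this gives $\int \phi(x)(1\pm \sin(2\pi x)/(2\pi x))\,dx\ge 0$ for every admissible $\phi$.

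For the four groups, the lower bound $\liminf \mathfrak{I}_{\mathcal{G}}(\sigma)$ now drops out by isolating the Dirac contribution. For $\mathcal{G}=\textrm{SO(Even)}$ and $\mathcal{G}=\textrm{Sp}$, there is no $\delta_0$, so the preceding remark gives $\int \phi W_{\mathcal{G}}\ge 0$ and hence $\mathfrak{I}_{\mathcal{G}}(\sigma)\ge 0$; combined with the upper bound this forces the limit to be $0$. For $\mathcal{G}=O$, evaluating the $\delta_0$ term gives
\[
\int_{-\infty}^{\infty}\phi(x)W_{O}(x)\,dx \ = \ \int_{-\infty}^{\infty}\phi(x)\,dx+\tfrac{1}{2}\phi(0)\ \ge\ \tfrac{1}{2}\phi(0),
\]
so $\mathfrak{I}_{O}(\sigma)\ge 1/2$ uniformly in $\sigma$. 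For $\mathcal{G}=\textrm{SO(Odd)}$, the same trick gives
\[
\int_{-\infty}^{\infty}\phi(x)W_{\textrm{SO(Odd)}}(x)\,dx \ = \ \int_{-\infty}^{\infty}\phi(x)\!\left(1-\tfrac{\sin 2\pi x}{2\pi x}\right)dx+\phi(0)\ \ge\ \phi(0),
\]
so $\mathfrak{I}_{\textrm{SO(Odd)}}(\sigma)\ge 1$. Combining with the matching upper bounds yields the limits claimed in \eqref{AsymptoticBoundsEqn}.

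There is no serious technical obstacle here: the argument is essentially a pointwise positivity observation plus the already-known na\"ive upper bounds from \cite{ILS}. The only thing worth checking carefully is that the $\delta_0$ contribution is genuinely extracted at $\phi(0)$ (which is well defined because $\phi$ is Schwartz and we have $\phi(0)>0$ by admissibility), and that the remaining continuous integrand is nonnegative on all of $\mathbf{R}$ --- both are immediate from the uniform bound $|\sin(2\pi x)/(2\pi x)|\le 1$.
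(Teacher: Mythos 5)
Your proposal is correct, and it overlaps with the paper only partially. For SO(Even) and Sp you argue exactly as the paper does: nonnegativity of the weight gives $\mathfrak{I}_{\mathcal{G}}(\sigma)\ge 0$, and the ILS bounds \eqref{SOEvenAsympBound}, \eqref{SpAsympBound} squeeze the limit to $0$. The differences are in the other two cases. For $\mathcal{G}=O$ the paper simply quotes the exact formula $\mathfrak{I}_{O}(\sigma)=1/2+1/(2\sigma)$ from Section \ref{orthogsec}, whereas you re-derive the lower bound $1/2$ by peeling off the $\tfrac12\delta_0$ term; since the ILS upper bound coincides with the exact value, the two are essentially equivalent. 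The real divergence is SO(Odd): the paper proves the exact identity $\mathfrak{I}_{\textrm{SO(Odd)}}(\sigma)=\mathfrak{I}_{\textrm{Sp}}(\sigma)+1$ for every $\sigma$, using the operator relation $K_{\textrm{SO(Odd)},\sigma}=K_{\textrm{Sp},\sigma}+\langle 1,\cdot\rangle$ together with the optimality criterion $(I+K)g=1$ and the formula $\mathfrak{I}=1/\langle 1,g\rangle$, and then lets $\sigma\to\infty$; you instead extract the $\delta_0$ contribution to get the uniform lower bound $\mathfrak{I}_{\textrm{SO(Odd)}}(\sigma)\ge 1$ and pair it with the ILS upper bound $1+1/s^{2}$. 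Your route is more elementary and self-contained --- it needs only $|\sin(2\pi x)/(2\pi x)|\le 1$, admissibility ($\phi\ge 0$, $\phi(0)>0$, $\phi$ Schwartz so the $\delta_0$ evaluation is legitimate), and the quoted upper bounds --- and it never invokes the Fredholm/optimality machinery. What the paper's argument buys in exchange is a stronger statement: an exact relation between the SO(Odd) and Sp infima valid at every finite $\sigma$, not just asymptotically, which is of independent interest. Both are complete proofs of the proposition as stated.
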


\begin{proof}
	For the case $\mathcal{G} = O$, our work in Section \ref{orthogsec} shows that in fact $\mathfrak{I}_{\mathcal{G}}(\sigma) = 1/2 + 1/(2 \sigma)$. When $\mathcal{G} = \textrm{SO(Even)}$ or $\textrm{Sp}$, note that the infimum is bounded below by zero, so the upper bounds in \eqref{SOEvenAsympBound} and \eqref{SpAsympBound} drive the real infimum to zero as $\sigma$ approaches infinity. \\

	To prove the final claim about the $\textrm{SO(Odd)}$ infimum, it suffices to show $\mathfrak{I}_{\textrm{SO(Odd)}} = \mathfrak{I}_{\textrm{Sp}} + 1$. \\

	Suppose $g_{\textrm{Sp},\sigma}$ is optimal for $\textrm{Sp}$ and $\sigma$. Then, $(I + K_{\textrm{Sp},\sigma})(g_{\textrm{Sp},\sigma}) = 1$. Note that $I + K_{\textrm{SO(Odd)},\sigma} = I + K_{\textrm{Sp},\sigma} + 1$ and so 
	\[ 
		(I + K_{SO(Odd),\sigma})(g_{\textrm{Sp},\sigma}) \ = \ 1 + \innerproduct{1,g_{\textrm{Sp},\sigma}} 
	\]
	and hence 
	\begin{equation}
		g_{\textrm{SO(Odd)},\sigma} \ = \ \frac{g_{\textrm{Sp},\sigma}}{1 + \innerproduct{1,g_{\textrm{Sp},\sigma}}}
		\label{RelationBetweenOptSpAndOptSOOdd}
	\end{equation}
	which implies
	\begin{align*}
		\mathfrak{I}_{\textrm{SO(Odd)}}(\sigma) \ &= \ \frac{1}{\innerproduct{1,g_{\textrm{SO(Odd)},\sigma}}} \\
		\ &= \ \frac{1 + \innerproduct{1,g_{\textrm{Sp},\sigma}}}{\innerproduct{1,g_{\textrm{Sp},\sigma}}} \\
		\ &= \ \mathfrak{I}_{\textrm{Sp}}(\sigma) + 1.
	\end{align*}
	Note that the inner product $\innerproduct{1,g_{\mathcal{G},\sigma}}$ is never $-1$, due to the bounds in \eqref{SpAsympBound}, so we have not divided by zero in the above manipulations. 
\end{proof}

Although we have upper bounds and asymptotics for $\mathfrak{I}_{\mathcal{G}}(\sigma)$, it does not give us the continuity or smoothness results we might expect from such a function. To establish those, we begin with a na\"{i}ve result. \\

\begin{lemma}
  For $\mathcal{G} = \textrm{SO(Even)}, \textrm{SO(Odd)}, Sp$, or $O$, $\mathfrak{I}_{\mathcal{G}}(\sigma)$ is non-increasing in $\sigma$. 
  \label{NonIncreasingLemma}
\end{lemma}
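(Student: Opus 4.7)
The plan is to exploit the fact that enlarging $\sigma$ strictly enlarges the class of admissible test functions, so the infimum (which is taken over this class) can only decrease or stay the same. This is a one-line monotonicity-by-inclusion argument, and I expect no real obstacle.

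Concretely, fix $\sigma_1 < \sigma_2$ and any group $\mathcal{G}$. I would denote by $\mathcal{A}(\sigma)$ the set of admissible test functions for support parameter $\sigma$, i.e.\ Schwartz functions $\phi$ with $\phi \ge 0$, $\phi \in L^1(\R)$, $\phi(0) > 0$, and $\mathrm{support}(\widehat{\phi}) \subseteq [-2\sigma, 2\sigma]$. Since $[-2\sigma_1, 2\sigma_1] \subseteq [-2\sigma_2, 2\sigma_2]$, the containment
\[
\mathcal{A}(\sigma_1) \ \subseteq \ \mathcal{A}(\sigma_2)
\]
holds trivially: a function whose Fourier transform is supported in the smaller interval is in particular supported in the larger one, and none of the other admissibility conditions depend on $\sigma$.

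The functional
\[
\phi \ \longmapsto \ \frac{\int_{-\infty}^{\infty}\phi(x)W_{\mathcal{G}}(x)\,dx}{\phi(0)}
\]
is the same functional on both $\mathcal{A}(\sigma_1)$ and $\mathcal{A}(\sigma_2)$ (the weight $W_{\mathcal{G}}$ does not depend on $\sigma$). Taking the infimum over a larger set cannot increase its value, so
\[
\mathfrak{I}_{\mathcal{G}}(\sigma_2) \ = \ \inf_{\phi \in \mathcal{A}(\sigma_2)} \frac{\int \phi\, W_{\mathcal{G}}}{\phi(0)} \ \le \ \inf_{\phi \in \mathcal{A}(\sigma_1)} \frac{\int \phi\, W_{\mathcal{G}}}{\phi(0)} \ = \ \mathfrak{I}_{\mathcal{G}}(\sigma_1),
\]
which is exactly the non-increasing statement. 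Since the argument is purely set-theoretic there is nothing delicate; the only point worth flagging is that $\phi(0) > 0$ and the sign/integrability conditions are preserved under the inclusion, which is immediate. Strict decrease (the content of the next section, presumably) would require more work, but non-increasingness is this immediate inclusion.
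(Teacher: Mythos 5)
Your proof is correct and is essentially the paper's argument: monotonicity by inclusion of the admissible classes. The only cosmetic difference is that the paper phrases it by plugging the optimal $\phi_{\mathcal{G},\sigma_1}$ into the $\sigma_2$ problem, whereas you take the infimum over the larger set directly (which even avoids needing existence of a minimizer).
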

\begin{proof}
  Let $\sigma_{2} \ge \sigma_{1}$ and let $\phi_{\mathcal{G}, \sigma_{1}}$ be the optimal function for the pair $(\mathcal{G}, \sigma_{1})$. Note that $\phi_{\mathcal{G},\sigma_{1}}$ is an admissible function for the pair $(\mathcal{G},\sigma_{2})$. As $\mathfrak{I}_{\mathcal{G}}(\sigma)$ is defined as an infimum, we have 
  \begin{equation}
    \mathfrak{I}_{\mathcal{G}}(\sigma_{2}) \ \le \ \frac{\int_{-\infty}^{\infty} \phi_{\mathcal{G}, \sigma_{1}}(x) W_{\mathcal{G}}(x) \ dx}{\phi_{\mathcal{G},\sigma_{1}}(0)} \ = \ \mathfrak{I}_{\mathcal{G}}(\sigma_{1}). 
    \label{NaiveEqnForNonInc}
  \end{equation}
\end{proof}

We may apply Theorem \ref{GOnWholeIntervalThm} for a short proof that $\mathfrak{I}_{\mathcal{G}}(\sigma)$ is in fact strictly decreasing in $\sigma$. 

\begin{theorem}
  For $\mathcal{G} = \textrm{SO(Even)}, \textrm{SO(Odd)}, Sp$, or $O$, $\mathfrak{I}_{\mathcal{G}}(\sigma)$ is strictly decreasing in $\sigma$. 
  \label{StrictDecrease}
\end{theorem}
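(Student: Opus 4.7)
The strategy is a proof by contradiction exploiting the rigidity of the optimizer. Suppose $\sigma_1 < \sigma_2$ with $\mathfrak{I}_{\mathcal{G}}(\sigma_1) = \mathfrak{I}_{\mathcal{G}}(\sigma_2)$. The admissibility argument already used in Lemma \ref{NonIncreasingLemma} shows that the zero extension $\tilde{g}$ of $g_{\sigma_1}$ to $[-\sigma_2,\sigma_2]$ realizes the functional value $\mathfrak{I}_{\mathcal{G}}(\sigma_2)$ for the $\sigma_2$ problem, so $\tilde{g}$ is itself an optimizer for the larger support problem. By the uniqueness portion of Lemma \ref{OptimalCriterionAndComputationLemma} (which rests on the Fredholm alternative and the positive definiteness of $I+K$), the minimizer is unique up to nonzero scaling, so $\tilde{g} = c\, g_{\sigma_2}$ for some $c \neq 0$. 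In particular, $g_{\sigma_2}$ vanishes identically on the set $(\sigma_1,\sigma_2) \cup (-\sigma_2,-\sigma_1)$, which has positive measure.

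Next I would upgrade this vanishing into identical vanishing on a full subinterval by invoking the regularity results already established. For $\sigma_2 > 1/2$, Corollary \ref{NotManyBadPointsCor} gives real-analyticity of $g_{\sigma_2}$ on the interior of the outermost interval ending at $\sigma_2$. Since that interior overlaps $(\sigma_1,\sigma_2)$ in an open subinterval, and a real-analytic function vanishing on a set of positive measure in a connected open set must vanish identically, one obtains $g_{\sigma_2} \equiv 0$ throughout the outermost interval. If instead $\sigma_2 \le 1/2$, Remark \ref{ReallySmallSigmaImpliesConstant} says $g_{\sigma_2}$ is constant on $(-\sigma_2,\sigma_2)$, so that constant must be zero and the contradiction is immediate.

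To finish, I would propagate the vanishing throughout $[-\sigma_2,\sigma_2]$ using the delay differential equations of Lemma \ref{GeneralSystemLemma}. From $g'(x) - \beta_{\mathcal{G}} g(x-1) = 0$ on the outermost interval, vanishing there forces $g_{\sigma_2}(x-1) \equiv 0$ on the adjacent shifted interval; the successive equations $g'(x-j) + \beta_{\mathcal{G}} g(x-(j-1)) - \beta_{\mathcal{G}} g(x-(j+1)) = 0$ then force vanishing on each next interval in turn. Together with the evenness of $g_{\sigma_2}$, iterating through the finite chain of intervals yields $g_{\sigma_2} \equiv 0$ on all of $[-\sigma_2,\sigma_2]$, which contradicts the defining relation $(I+K_{\sigma_2}) g_{\sigma_2} = 1$ pointwise.

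The step I anticipate will require the most care is the overlap bookkeeping: verifying that for every pair $\sigma_1 < \sigma_2$ the forced-vanishing set $(\sigma_1,\sigma_2)$ genuinely meets the interior of the outermost analyticity interval (whose left endpoint depends on which half-unit bin contains $\sigma_2$) in an open subinterval, with additional care at transition values where $\sigma_2$ is an integer or half-integer and one of the two systems of intervals in Section \ref{findimreductionsection} degenerates. Once this bookkeeping is handled, the analytic continuation of the zero and the march through the delay system are essentially automatic from results already in the paper.
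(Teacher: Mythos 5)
Your setup coincides with the paper's: assume the infimum does not strictly decrease, conclude via Lemma \ref{NonIncreasingLemma} and the uniqueness in Lemma \ref{OptimalCriterionAndComputationLemma} that the zero-extension of $g_{\mathcal{G},\sigma_1}$ is the optimizer for $\sigma_2$, upgrade the vanishing on $(\sigma_1,\sigma_2)$ to vanishing on the whole outermost interval by real-analyticity and the identity theorem, and then try to kill $g$ everywhere. The gap is in the last step, the propagation through the delay system. The two systems of delay differential equations in Lemma \ref{GeneralSystemLemma} are decoupled: the equations \eqref{first1}--\eqref{first2k1} hold for $x$ in $I_0$ and only relate values of $g$ on the integer translates $I_0, I_1,\dots,I_{2k}$, while the equations \eqref{second1a}--\eqref{second2ka} hold for $x$ in $J_0$ and only relate values on $J_0,\dots,J_{2k-1}$. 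Since the $I_j$ and $J_j$ interleave and together cover $[-\sigma_2,\sigma_2]$, vanishing of $g_{\mathcal{G},\sigma_2}$ on its outermost interval $\mathring{I}_{0,\sigma_2}$ propagates (as you describe) to every $I_{j,\sigma_2}$, but never touches any $J_{j,\sigma_2}$; evenness does not bridge the two families either, since reflection sends $I_j$ to $I_{2k-j}$ and $J_j$ to another $J$-interval. Note also that the forced-vanishing set $(\sigma_1,\sigma_2)$ meets only $\mathring{I}_{0,\sigma_2}$ within the $\sigma_2$-decomposition (when $\sigma_1,\sigma_2$ lie in the same half-unit bin one has $2k-\sigma_2<\sigma_1$), so you cannot seed the $J$-chain the same way. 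As written, your argument therefore only shows $g\equiv 0$ on $\bigcup_j I_{j,\sigma_2}$, which does not yet contradict $(I+K)g=1$.

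The paper closes exactly this hole by exploiting that the common function is also the optimizer for $\sigma_1$, hence is real-analytic on the interior of \emph{its} second-outermost interval $\mathring{J}_{0,\sigma_1}=(\sigma_1-1,\,2k-\sigma_1)$, and by checking that $\mathring{J}_{0,\sigma_1}\cap\mathring{I}_{0,\sigma_2}$ is a nonempty open set precisely because $\sigma_1<\sigma_2$ (the left endpoint $2k-\sigma_2$ moves left while $2k-\sigma_1$ stays to its right, and $\sigma_1+\sigma_2<2k+1$ within a bin). The identity theorem then forces vanishing on $\mathring{J}_{0,\sigma_1}$ as well, and both chains of the $\sigma_1$-decomposition propagate to give $g_{\mathcal{G},\sigma_1}\equiv 0$, the desired contradiction. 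To repair your proof you need this second seed (or some substitute argument for the $J$-intervals, e.g.\ boundary conditions on the finite-dimensional solution family on $J_0$, which would itself require proof); the restriction to pairs $\sigma_1<\sigma_2$ in a common bin costs nothing, since monotonicity from Lemma \ref{NonIncreasingLemma} reduces the general statement to that case.
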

\begin{proof}
  We show that for any of the aforementioned $\mathcal{G}$ and for $k < \sigma_{1} < \sigma_{2} < k + 1/2$  or $k + 1/2 < \sigma_{1} < \sigma_{2} < k + 1$, that the optimal $g$ corresponding to $\sigma_1$ and the optimal $g$ corresponding to $\sigma_2$ are different. Combined with Lemma \ref{NonIncreasingLemma}, this proves our desired result. \\

  First, we make a few observations. By Lemma \ref{OptimalCriterionAndComputationLemma}, we know that for each pair $(\mathcal{G},\sigma)$, there is a unique $g_{\mathcal{G}, \sigma}$ realizing $\mathfrak{I}_{\mathcal{G}}(\sigma)$ for each $g$ and $\sigma$. So, it suffices to show that if $\sigma_{2} > \sigma_{1}$, that $g_{\mathcal{G},\sigma_{1}}$ is not optimal. \\

  There are two cases, $k < \sigma_{1} < \sigma_{2} < k + 1/2$  and $k + 1/2 < \sigma_{1} < \sigma_{2} < k + 1$. We examine the first case, using a proof by contradiction. The second case is identical. Suppose $g_{\mathcal{G},\sigma_{1}} = g_{\mathcal{G},\sigma_{2}}$ (that $g_{\mathcal{G}, \sigma_{1}}$ is again optimal for support $\sigma_{2}$). Note that $g_{\mathcal{G},\sigma_{1}}$ vanishes on $(\sigma_{1}, \sigma_{2})$. But, by Theorem \ref{GOnWholeIntervalThm}, $g_{\mathcal{G},\sigma_{2}}$ is real analytic on both 
  \[ \mathring{I}_{0, \sigma_{2}} := (\sigma_{2} - 2(\sigma_{2} - k), \sigma_{2}) \] 
  and 
  \[ \mathring{J}_{0,\sigma_{2}} := (\sigma_{2} - 1, \sigma_{2} - 2(\sigma_{2} - k)). \] 

  We know that $(\sigma_{1}, \sigma_{2}) \cap \mathring{I_{0, \sigma_{2}}}$ is nonempty. It follows from real-analyticity and the Identity Theorem that $g_{\mathcal{G}, \sigma_{2}} = g_{\mathcal{G}, \sigma_{1}}$ must vanish identically on $\mathring{I}_{0, \sigma_{2}}$ and also on $\mathring{I}_{0,\sigma_{1}}$. \\

  Note that $g_{\mathcal{G}, \sigma_{1}}$ is also  real analytic on 
  \[ \mathring{I}_{0, \sigma_{1}} := (\sigma_{1} - 2(\sigma_{1} - k), \sigma_{1}) \] 
  and 
  \[ \mathring{J}_{0,\sigma_{1}} := (\sigma_{1} - 1, \sigma_{1} - 2(\sigma_{1} - k)). \] 

  Figure \ref{fig:IntervalOverlap} shows the intervals in discussion and crucially the overlap between them. 
  \begin{figure}[H]
    \centering
    \includegraphics[scale=0.5]{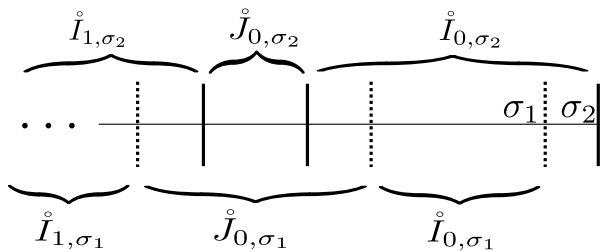}
    \caption{The four outermost intervals and the overlap between them}
    \label{fig:IntervalOverlap}
  \end{figure}

  But, $\sigma_{1} < \sigma_{2}$, so $\sigma_{2} - 2(\sigma_{2} - k) < \sigma_{1} - 2(\sigma_{1} - k)$. Consequently, $\mathring{J}_{0,\sigma_{1}} \cap \mathring{I}_{0,\sigma_{2}}$ is nonempty and open. Therefore, $g_{\mathcal{G}, \sigma_{2}} = g_{\mathcal{G}, \sigma_{1}}$ also vanishes identically on $\mathring{J}_{0, \sigma_{1}}$. It follows from the system of delay differential equations \eqref{first1} -- \eqref{first2k1}, \eqref{second1a} -- \eqref{second2ka} that $g_{\mathcal{G}, \sigma_{1}}$ is identically zero. This clearly contradicts our assumption that $g_{\mathcal{G}, \sigma_{1}}$ was optimal for the pair $(\mathcal{G}, \sigma_{1})$, since the optimal function cannot be identically zero. 
  \end{proof}

The following corollary is especially reassuring in our search for asymptotic behavior in the one-level case.   

\begin{corollary}
  For $\mathcal{G} = \textrm{SO(Even), SO(Odd)}, Sp,$ or $O$, the following facts hold for $\mathfrak{I}_{\mathcal{G}}(\sigma): (0, \infty) \longrightarrow \R$. 
  \begin{enumerate}[(a)]
    \item \label{resulta} The function $\mathfrak{I}_{\mathcal{G}}(\sigma)$ is continuous, excepting at most countably many points. The only discontinuities can be jump discontinuities. 
    \item \label{resultb} The function $\mathfrak{I}_{\mathcal{G}}(s)$ is differentiable almost everywhere. 
    \item \label{resultc} The limit
\begin{equation}
    \lim_{\sigma \to \infty}\mathfrak{I}_{\mathcal{G}}(\sigma)
    \label{LimEqn}
  \end{equation}
  exists and is real-valued for all $\mathcal{G}$. 
\item \label{resultd} If $T_{\mathcal{G}}$ is the range of $\mathfrak{I}_{\mathcal{G}}(\sigma)$, then $\mathfrak{I}_{\mathcal{G}}(\sigma)$ has an inverse on $T_{\mathcal{G}}$. 
  \end{enumerate}
  \end{corollary}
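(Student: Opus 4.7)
The plan is to derive all four parts as straightforward consequences of two facts already established: $\mathfrak{I}_{\mathcal{G}}(\sigma)$ is strictly decreasing (Theorem \ref{StrictDecrease}), and $\mathfrak{I}_{\mathcal{G}}(\sigma)$ is bounded below. Boundedness below is immediate because, for any admissible $\phi$, we have $\phi \ge 0$, $W_{\mathcal{G}} \ge 0$ (treating the Dirac mass carefully in the orthogonal/SO(odd) cases, where it contributes $\tfrac{1}{2}$ or $1$ to the numerator), and $\phi(0) > 0$, so the functional whose infimum defines $\mathfrak{I}_{\mathcal{G}}(\sigma)$ is nonnegative. Moreover, the upper bounds \eqref{OrthogAsympBound}--\eqref{SpAsympBound} from \cite{ILS} give a finite real value of the functional for every $\sigma > 0$, so $\mathfrak{I}_{\mathcal{G}}(\sigma) \in [0,\infty)$ throughout.

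For part \ref{resulta}, I would invoke the classical theorem that a monotone real-valued function on an interval has at most countably many discontinuities, each of which must be a jump discontinuity. Since $\mathfrak{I}_{\mathcal{G}}(\sigma)$ is strictly decreasing on $(0, \infty)$ by Theorem \ref{StrictDecrease}, this applies directly. For part \ref{resultb}, I would cite Lebesgue's differentiation theorem for monotone functions, which guarantees that every monotone function on an open interval of $\R$ is differentiable almost everywhere; again, Theorem \ref{StrictDecrease} supplies the monotonicity hypothesis.

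For part \ref{resultc}, combine monotonicity with the lower bound $\mathfrak{I}_{\mathcal{G}}(\sigma) \ge 0$ established above: a decreasing function that is bounded below must converge as $\sigma \to \infty$, and the limit is a nonnegative real number. (Proposition \ref{AsympBoundsProp} identifies the specific values, but only convergence is required here.) For part \ref{resultd}, strict monotonicity from Theorem \ref{StrictDecrease} immediately implies that $\mathfrak{I}_{\mathcal{G}}: (0,\infty) \to T_{\mathcal{G}}$ is injective, hence admits a well-defined inverse on its range $T_{\mathcal{G}}$.

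I do not anticipate any substantive obstacle: the entire corollary is a harvest of general facts about strictly decreasing, bounded functions, all of which are already supplied by the preceding theorems of this section. The only minor care point is verifying the nonnegativity of the functional (and hence of $\mathfrak{I}_{\mathcal{G}}$) in the two cases where $W_{\mathcal{G}}$ contains a Dirac mass at the origin; there one observes that $\int \phi(x)\delta_0(x)\,dx = \phi(0) \ge 0$, so the Dirac contribution is nonnegative and does not affect the sign of the ratio.
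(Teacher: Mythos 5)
Your proposal is correct and follows essentially the same route as the paper: both arguments harvest the classical facts about monotone functions (at most countably many jump discontinuities, Lebesgue a.e.\ differentiability, convergence of a bounded decreasing function, invertibility from strict monotonicity), with Theorem \ref{StrictDecrease} supplying monotonicity and the nonnegativity of the functional supplying the lower bound for part \ref{resultc}. The only cosmetic difference is that the paper transfers the standard statements from $\R$ to the domain $(0,\infty)$ by composing with $e^{x}$, whereas you apply them directly on the interval, which is equally valid.
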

  \begin{proof} In \cite{B}, one may find proofs that \eqref{resulta}, \eqref{resultb}, and \eqref{resultd} hold for decreasing functions from $\R$ to $\R$. Result \eqref{resultc} holds as well, but the limit is allowed to b either $\pm \infty$.  \\

  Results \eqref{resulta} and \eqref{resultb} from above are standard for functions $f: \R \longrightarrow \R$. To show that they hold for our function, consider $\mathfrak{I}(e^{x}): \R \longrightarrow \R$. Since $d_{x}(e^{x}) = e^{x}$ never vanishes, wherever $\mathfrak{I}(e^{x})$ exists, the derivative of $\mathfrak{I}$ exists as well. \\

  For result \eqref{resultc}, note that we require $\phi \ge 0$. So, the limit cannot be $-\infty$. Since $\mathfrak{I}(\sigma)$ is decreasing and exists for all $\sigma$, the limit is not $+\infty$. \\

  Result \eqref{resultd} does not need to be modified from the standard result. 
\end{proof}

\newpage

\section{Future Works}

There are two clear directions for future work: remaining analysis of the one-level and analysis of optimal functions for the $n$-level, for $n > 1$. We discuss each in turn. 
\subsection{The One-Level} Due to restrictions on the number theory side of density computations, it is not as pressing to know all of the optimal test functions for all $\sigma$. However, our new method raises a few questions we feel should be addressed by future work. \\

First, in typical computations involving delay differential equations, one knows the function on an interval the size of the shift. For example, for the standard delay differential equation with a single delay given by 
\begin{equation}
  \frac{d}{dt} x(t) = f(x(t), x(t - \tau))
  \label{StandardDDE}
\end{equation}
we are given an initial condition such as $\phi: [-\tau,0] \longrightarrow \R^{n}$. It can then be shown that the delay differential equation is equivalent to a homogenous initial value problem which can be solved via successive iteration \cite{BC}. \\

In this work we solve a system of location-specific delay-differential equations without any such initial condition or ``history'' of the solution. As this computation resembles solving a system of equations without the use of matrices, we are lead to ask if there is a general criterion for when such systems are solvable. \\

Second, after resolving this system of equations, we are left with a finite-dimensional optimization that leads to the optimal $\phi$. We are able to solve two of these optimization problems ($1 < \sigma < 1.5$, and $1.5 < \sigma < 2$) using matrices. However, it is not obvious whether this method is necessary or sufficient. We seek a general solution to this finite-dimensional optimization problem.  \\

Finally, there is much more to learn about the infimum function, $\mathfrak{I}_{\mathcal{G}}(\sigma)$. While we are able to show that the function is strictly decreasing, we suspect the following statement also holds.  
\begin{conjecture}
  The function $\mathfrak{I}_{\mathcal{G}}(\sigma)$ is continuous in $\sigma$ and is real-analytic except at integers and half-integers.
\end{conjecture}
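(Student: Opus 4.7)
The plan is to establish real-analyticity on each open interval of the form $(k, k+1/2)$ or $(k+1/2, k+1)$ using the finite-dimensional reduction of Section \ref{findimreductionsection}, and then to verify continuity at the transition points $\sigma \in \tfrac{1}{2}\Z$ by a limiting argument combined with the monotonicity established in Lemma \ref{NonIncreasingLemma}.

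First, fix an open interval $(a,b)$ containing no integers or half-integers. By Theorem \ref{GOnWholeIntervalThm} and Corollary \ref{DimensionOfOptimizationCor}, for every $\sigma \in (a,b)$ the optimal $g_\sigma$ lies in an $n$-dimensional family whose basis functions depend trigonometrically, hence real-analytically, on $\sigma$ through the interval endpoints such as $\sigma-1$ and $2k-\sigma$. The free coefficients are pinned down by a linear system $M(\sigma)\vec{c}(\sigma) = \vec{b}(\sigma)$ encoding continuity at the interval boundaries together with the normalization $(I+K_\sigma)g_\sigma = 1$ evaluated at finitely many points, as in \eqref{SO(even)MatrixEqn} and \eqref{SO(odd)AndSpMatrixEqn}. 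Both $M(\sigma)$ and $\vec{b}(\sigma)$ have entries real-analytic in $\sigma$. By Lemma \ref{OptimalCriterionAndComputationLemma} the system has a unique solution for every $\sigma \in (a,b)$, so $\det M(\sigma) \neq 0$; Cramer's rule then gives $\vec{c}(\sigma)$ real-analytic on $(a,b)$. Integration against the constant function $1$ over $[-\sigma,\sigma]$ preserves analyticity in $\sigma$, and since $\mathfrak{I}_\mathcal{G}(\sigma) = 1/\langle 1, g_\sigma\rangle$ by \eqref{InfComputationByg} with $\langle 1, g_\sigma\rangle > 0$, the reciprocal rule yields real-analyticity of $\mathfrak{I}_\mathcal{G}$ on $(a,b)$.

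Now fix a critical point $\sigma_0 \in \tfrac{1}{2}\Z_{\ge 1}$. Monotonicity ensures the one-sided limits $L^{\pm} := \lim_{\sigma \to \sigma_0^\pm} \mathfrak{I}_\mathcal{G}(\sigma)$ both exist and satisfy $L^- \ge \mathfrak{I}_\mathcal{G}(\sigma_0) \ge L^+$. The inequality $\mathfrak{I}_\mathcal{G}(\sigma_0) \le L^-$ is automatic because any admissible $\phi$ for $\sigma < \sigma_0$ remains admissible for $\sigma_0$; this yields $L^- = \mathfrak{I}_\mathcal{G}(\sigma_0)$. For the converse direction $L^+ \ge \mathfrak{I}_\mathcal{G}(\sigma_0)$, I would argue at the operator level: after extending each $g_\sigma$ by zero to a fixed large ambient interval, the kernel $m_\mathcal{G}(x-y)\mathbf{1}_{[-\sigma,\sigma]^2}(x,y)$ converges in $L^2(\R^2)$ as $\sigma \to \sigma_0^+$, so $K_\sigma \to K_{\sigma_0}$ in Hilbert--Schmidt norm. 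Continuous dependence of the resolvent $(I+K_\sigma)^{-1}$, valid because $I+K_{\sigma_0}$ is positive definite by Corollary \ref{HomogEqnOnlyTrivSoln}, then gives $g_\sigma \to g_{\sigma_0}$ in $L^2$; hence $\langle 1, g_\sigma\rangle \to \langle 1, g_{\sigma_0}\rangle$, and taking reciprocals gives $L^+ = \mathfrak{I}_\mathcal{G}(\sigma_0)$.

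The main obstacle will be making the limiting argument at $\sigma_0$ fully rigorous. The operator-theoretic sketch above hides a subtlety: $g_\sigma$ and $g_{\sigma_0}$ naturally live in different $L^2$ spaces, so one must carefully extend by zero and verify that no mass concentrates near the endpoints $\pm\sigma_0$ as $\sigma \to \sigma_0$. An alternative is to track the matrix $M(\sigma)$ from the first step directly and to show that the degeneracy caused by coalescing interval endpoints is removable after an appropriate row reduction, yielding a well-defined limiting system whose unique solution, by Lemma \ref{OptimalCriterionAndComputationLemma}, must equal $g_{\sigma_0}$. Either way, the technical heart is ruling out a jump at the critical $\sigma_0$, which monotonicity alone cannot exclude; the failure of analyticity there should then follow from the fact that the order of the governing ODE in Theorem \ref{ExplicitOutsideDiffEqTheorem} jumps between the two sides of $\sigma_0$, generically producing a branch-type singularity in the coefficient functions $c_i(\sigma)$.
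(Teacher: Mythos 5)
This statement is left as a conjecture in the paper --- the author proves only that $\mathfrak{I}_{\mathcal{G}}(\sigma)$ is strictly decreasing (Theorem \ref{StrictDecrease}) and the generic consequences of monotonicity (continuity off a countable set, differentiability a.e.) --- so there is no proof of record to compare against; your proposal is an attempt at the open problem. Its first half, however, assumes exactly the point the paper itself flags as unresolved. You infer $\det M(\sigma) \neq 0$ from the uniqueness of the optimal $g_\sigma$ guaranteed by Lemma \ref{OptimalCriterionAndComputationLemma}, but that inference does not go through: the linear system built from continuity at the interval endpoints together with $(I+K)g$ evaluated at finitely many points (as in \eqref{SO(even)MatrixEqn}) encodes only \emph{necessary} conditions. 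A singular $M(\sigma)$ would mean merely that these particular conditions fail to pin down the optimizer, not that the optimizer is non-unique, so uniqueness of $g_\sigma$ cannot rule out degeneracy; nor is it established in general that a solution of the finite system has $(I+K)g$ constant on all of $[-\sigma,\sigma]$ rather than only at the sampled points. The paper verifies nonsingularity by direct computation for $1<\sigma<1.5$ in \eqref{The3x3Determinants} and only numerically for $1.5<\sigma<2$, and the Future Works section states explicitly that it is not known whether this matrix method is sufficient in general. Without nonvanishing of $\det M(\sigma)$ (or some replacement, e.g.\ analytic dependence of $(I+P_\sigma K P_\sigma)^{-1}$ on $\sigma$ within each interval), Cramer's rule gives nothing and the real-analyticity claim on $(k,k+1/2)$ and $(k+1/2,k+1)$ is unproven.

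The continuity half has a logical slip and a sound idea. Your left-limit argument is circular: admissibility of any $\phi$ with support parameter $\sigma<\sigma_0$ for the pair $(\mathcal{G},\sigma_0)$ gives $\mathfrak{I}_{\mathcal{G}}(\sigma_0)\le \mathfrak{I}_{\mathcal{G}}(\sigma)$, i.e.\ $\mathfrak{I}_{\mathcal{G}}(\sigma_0)\le L^{-}$, which is the same inequality monotonicity (Lemma \ref{NonIncreasingLemma}) already provides; it does not yield $L^{-}\le \mathfrak{I}_{\mathcal{G}}(\sigma_0)$, and a decreasing function can certainly jump, so left-continuity is not ``automatic.'' By contrast, the operator argument you sketch for the right limit is essentially correct and, once the different domains are handled by zero-extension (write the optimality equation as $(I+P_\sigma K P_\sigma)\widetilde g_\sigma = P_\sigma 1$ on a fixed ambient $L^2$ space, note $P_\sigma K P_\sigma \to P_{\sigma_0}KP_{\sigma_0}$ in Hilbert--Schmidt norm, and use bounded invertibility of $I+K_{\sigma_0}$ from Corollary \ref{HomogEqnOnlyTrivSoln} plus a Neumann-series perturbation), it is two-sided in $\sigma$: the same estimate gives $\langle 1,g_\sigma\rangle \to \langle 1, g_{\sigma_0}\rangle$ as $\sigma\to\sigma_0$ from either direction, and with \eqref{InfComputationByg} and $\langle 1,g_{\sigma_0}\rangle>0$ this would prove continuity of $\mathfrak{I}_{\mathcal{G}}$ at every $\sigma$, making the one-sided bookkeeping unnecessary. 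So the continuity claim looks repairable by your own second idea, while the piecewise real-analyticity claim still rests on the unproven nondegeneracy of the finite-dimensional system; your closing remark that analyticity should \emph{fail} at integers and half-integers is speculation beyond what the conjecture asserts and would need a separate argument.
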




\subsection{Higher Level Densities} Finding optimal test functions for higher level densities seems like an especially ambitious project. For the $m$-level density, the weight functions are given by  
\begin{align}
  W_{m,\varepsilon}(x) \ &= \ \textbf{det}\left( K_{\varepsilon}(x_{i},x_{j}) \right)_{i,j \le m} \notag \\
  W_{m,O^{+}}(x) \ &= \ \textbf{det}\left( K_{1}(x_{i},x_{j}) \right)_{i,j \le m} \notag \\
  W_{m,O^{-}}(x) \ &= \ \textbf{det}\left( K_{-1}(x_{i},x_{j}) \right)_{i,j \le m} + \sum_{k=1}^{m} \delta(x_{k})\textbf{det}\left( K_{-1}(x_{i},x_{j}) \right)_{i,j \not = k}\notag \\
  W_{m,O}(x) \ &= \ \frac{1}{2} W_{m,O^{+}}(x) + \frac{1}{2}W_{m,O^{-}} \notag \\
  W_{m,U}(x) \ &= \ \textbf{det}\left( K_{0}(x_{i},x_{j}) \right)_{i,j \le m} \notag \\
W_{m,Sp}(x) \ &= \ \textbf{det}\left( K_{-1}(x_{i},x_{j}) \right)_{i,j \le m}
  \label{mLevelDensities}
\end{align}
where $K(y) = \frac{\sin \pi y}{\pi y}, K_{\varepsilon}(x,y) = K(x - y) + \varepsilon K(x + y)$, for $\varepsilon = 0, \pm 1$, $O^{+}$ denotes the group $\textrm{SO(Even)}$ and $O^{-}$ the group $\textrm{SO(Odd)}$ \cite{KS, Mil1}. \\

For $m > 1$, $\widehat{W}$ becomes substantially more complicated. It is not clear whether any of the methods developed in this paper will help discover optimal test functions for higher level density. However, higher level density calculations are quite important (see \cite{Mil1}), so we feel this is an valuable area for future research.   

\newpage

\begin{appendices}
\section{Plotting Approximately Optimal $g$}
\label{approxoptappendix}
  We use the following code to obtain the shape of the optimal $\phi$. This code is written in Mathematica. The code is explained in the comments: 
  \begin{verbatim}
  
(* Here, our fredholm equation is of the form 
\[ \int_{-\sigma}^{\sigma} K(x,y)g(y) dy - \mu g(x) = f(x)\] 
The notation used to name variables will correspond to this *)
(* \
Ultimately, in this approximation, we are solving the matrix equation \
gvect_{1 \times n + 1}BigMat_{n+1\times n+1} = fvect_{1\times n + 1}. \
This gives a discrete set of values for g. We use row vectors for \
conveniece *)

sigma = Set_This _Yourself; (*Half the support of the Fourier \
Transform of phi - the support of the function g so that g \star \
\check{g} = \hat{\phi} *)

CoordFunction[m_] := -sigma + 2 (sigma (m - 1))/n
k[x_, y_] := 
  If[Abs[x - y] <= 1, .5, 
   0]; (*The kernel for SO(Even) *)
(*k[x_,y_]:= If[Abs[x-y]\
\[LessEqual] 1, .5,1]; *)(*The kernel for SO(Odd) *)
(*k[x_,y_]:= \
If[Abs[x-y]\[LessEqual] 1, -.5,0];*) (*The kernel for Symplectic *)

mu = -1; (* A parameter in a general Fredholm type 2 Equation, which \
in the ILS case is -1*)

n = Set_This _Yourself; (* This is the number of partitions of the \
interval [-\sigma,\sigma]. The partition is regular.*)

CoordFunction[m_] := -sigma + 
   2 (sigma (m - 1))/
    n ;(*This function of integers just gives the endpoints of the \
partition - it makes things a little easier to only write it once *) 
fMatrixBase[r_, s_] := 
  If[r == 1 || r == n + 1, 
   sigma/n (k[CoordFunction[s], CoordFunction[r]] ), 
   sigma/n (2 k[CoordFunction[s], CoordFunction[r]] )]; 
(* We use two functions to create our matrix since we need to make a \
diagonal adjustment. This seemed like the neatest way to do it. Note \
that our vectors are row vectors. While we would  *)

fMatrix[r_, s_] := 
  If[r == s, fMatrixBase[r, s] - mu, fMatrixBase[r, s]];
(* We need to make an adjustment along the diagonal because of the \
identity operator in the Fredholm equation *) 
f[x_] := 1; 
(* Recall $f$ is the function on the righthand side of the Fredholm \
equation *)
fvect = Table[f[m], {m, 1, n + 1}];
BigMat = Transpose[Array[fMatrix, {n + 1, n + 1}]];
gvect = fvect.Inverse[BigMat];
xvect = Table[CoordFunction[k], {k, 1, n + 1}]; 
(* Just a vector of x-coordinates to plot against the $g$-values*)
\
ListPlot[Transpose[{xvect, gvect}]]
  \end{verbatim}

\newpage

\section{The case $1 < \sigma < 1.5$}
  Below is the notebook that computes costants and generates plots for the case $1 < \sigma < 1.5$.
  \begin{verbatim}
  (* For SO(Even) *)
(*g[x_]= Cos[x /2- (Pi + 1)/4]; *)
(* For \
Sp(Odd)/Sp *)
g[x_] = Cos[x/2 + (Pi - 1)/4];
(*The matrix for SO(Even) *)
(*M= \
{{Cos[(s-1)/Sqrt[2]],Sin[(s-1)/Sqrt[2]], 0}, {Cos[(s-1)/Sqrt[2]], 0, \
0}, {1/Sqrt[2]Sin[(s-1)/Sqrt[2]] + Cos[(s-1)/Sqrt[2]], Sqrt[2] - \
1/Sqrt[2]Cos[(s-1)/Sqrt[2]],-1}}; *)
(* The Matrix for SO(Odd)/Sp *)

M = {{Cos[(s - 1)/Sqrt[2]], Sin[(s - 1)/Sqrt[2]], 
    0}, {Cos[(s - 1)/Sqrt[2]], 0, 
    0}, {-1/Sqrt[2] Sin[(s - 1)/Sqrt[2]] + 
     Cos[(s - 1)/Sqrt[2]], -Sqrt[2] + 
     1/Sqrt[2] Cos[(s - 1)/Sqrt[2]], -1}};
V = {{g[s - 1]}, {g[s - 1]}, {g[s - 1] - g[2 - s]}};
Simplify[Inverse[M].V]



c11[s_] = Sec[(-1 + s)/Sqrt[2]] Sin[1/4 (3 + 3 \[Pi] - 2 s)];
c31[s_] = 
  Simplify[Sin[1/4 (-3 + 3 \[Pi] + 2 s)] + (
    Sin[1/4 (3 + 3 \[Pi] - 2 s)] Tan[(-1 + s)/Sqrt[2]])/Sqrt[2]];
c12[s_] = Sec[(-1 + s)/Sqrt[2]] Sin[1/4 (3 + \[Pi] - 2 s)];
c32[s_] = 
  Simplify [
   Sin[1/4 (-3 + \[Pi] + 2 s)] - (
    Sin[1/4 (3 + \[Pi] - 2 s)] Tan[(-1 + s)/Sqrt[2]])/Sqrt[2] ];

(* g1 is for SO(Even), g2 is for the other groups*)

lambda1[s_] := 
  c11[s] + Integrate[g1[x, s], {x, 0, 1}, 
    Assumptions -> Element[s, Reals] && 1 < s < 1.5];
(*FullSimplify[lambda1[s]];*)

g1[x_, s_] := 
  Piecewise[{{0, Abs[x] > s}, {c11[s] Cos[Abs[x]/Sqrt[2] ], 
     Abs[x] <= s - 1}, {Cos[1/2 Abs[x ] - (Pi + 1)/4], 
     s - 1 < Abs[x] < 
      2 - s}, {c11[s]/Sqrt[2] Sin[(Abs[x] - 1)/Sqrt[2]] + c31[s], 
     2 - s <= Abs[s] <= s}}];
scaledg1[x_, s_] = (1/lambda1[s]) g1[x, s];
(*FullSimplify[1/Integrate[scaledg1[x,s],{x,-s,s}, Assumptions \
\[Rule] Element[s,Reals] && 1 < s < 1.5]]*)
\
(*Plot[scaledg1[x,1.2],{x,-1.2,1.2}] *)
(* This will generate a plot \
of the actual phi function. *)
\
(*(InverseFourierTransform[scaledg1[x,1.2],x,t])^2 *)
\
(*DiscretePlot[(InverseFourierTransform[scaledg1[x,1.2], \
x,t])^2,{t,lb,ub,stepsize}]*)

$Aborted

phisoeven[
   t_] = ((0.09294262051124703) (t (-0.17677669529663692` + 
          0.35355339059327384` t + 0.35355339059327384` t^2 - 
          0.7071067811865477` t^3) Cos[0.15` + 0.8` t] + 
       0.17677669529663692` t Sin[0.15` + 0.8` t] - 
       0.35355339059327384` t^2 Sin[0.15` + 0.8` t] - 
       0.35355339059327384` t^3 Sin[0.15` + 0.8` t] + 
       0.7071067811865477` t^4 Sin[0.15` + 0.8` t] - 
       0.25` t Sin[0.6353981633974484` + 0.8` t] - 
       0.5` t^2 Sin[0.6353981633974484` + 0.8` t] + 
       0.5` t^3 Sin[0.6353981633974484` + 0.8` t] + 
       1.` t^4 Sin[0.6353981633974484` + 0.8` t] + 
       0.29674900870488613` t^2 Sin[0.2` t] + 
       1.8552402359675454`*^-16 t^4 Sin[0.2` t] + 
       0.4322920546658651` Cos[t] Sin[0.2` t] - 
       2.5937523279951913` t^2 Cos[t] Sin[0.2` t] + 
       3.458336437326921` t^4 Cos[t] Sin[0.2` t] + 
       0.29674900870488613` t Sin[0.2` t] Sin[t] - 
       1.1869960348195445` t^3 Sin[0.2` t] Sin[t] + 
       t Cos[0.2` t] (0.4322920546658654` - 0.9243327675738177` t^2 - 
          0.05974865824208703` t Sin[t] + 
          0.2389946329683481` t^3 Sin[t]))^2)/(0.125` t - 0.75` t^3 + 
     1.` t^5)^2;
Plot[phisoeven[t], {t, -5, 5}, PlotRange -> All]
(* The fourier transform *)
(*FourierTransform[phisoeven[t],t,x]*)



(* This one is for Sp *)
Clear[lambda3, scaledg3]
g3[x_, s_] := 
  Piecewise[{{0, Abs[x] > s}, {c12[s] Cos[Abs[x]/Sqrt[2] ], 
     Abs[x] <= s - 1}, {Cos[Abs[x]/2 + (Pi - 1)/4], 
     s - 1 < Abs[x] < 
      2 - s}, {-c12[s]/Sqrt[2] Sin[(Abs[x] - 1)/Sqrt[2]] + c32[s], 
     2 - s <= Abs[s] <= s}, {0, Abs[x] > s}}];
lambda3[s_] := 
  c12[s] - Integrate[g3[x, s], {x, 0, 1}, 
    Assumptions -> Element[s, Reals] && 1 < s < 1.5];
FullSimplify[lambda3[s]]
scaledg3[x_, s_] = (1/lambda3[s] ) g3[x, s];
lambda3SOOdd[s_] := 
 c12[s] - Integrate[g3[x, s], {x, 0, 1}, 
   Assumptions -> Element[s, Reals] && 1 < s < 1.5] + 
  2 Integrate[g3[x, s], {x, 0, s}, 
    Assumptions -> Element[s, Reals] && 1 < s < 1.5]
Plot[scaledg3[x, 1.2] , {x, -1.2, 1.2}] 
(*InfSp[s_] = FullSimplify[1/Integrate[scaledg3[x,s],{x,-s,s}, \
Assumptions \[Rule] Element[s,Reals] && 1 < s < 1.5]]*)

g3SOOdd[x_, s_] = (1/lambda3SOOdd[s]) g3[x, s];
(*Plot[g3SOOdd[x,1.2],{x,-1.2,1.2}] *)
\
(*FullSimplify[Convolve[g3SOOdd[x,s],g3SOOdd[-x,s],x,y]]*)
\
(*InfSOOdd[s_] = FullSimplify[1/Integrate[g3SOOdd[x,s],{x,-s,s}, \
Assumptions \[Rule] Element[s,Reals] && 1 < s < 1.5]] *)
(*To \
generate a plot of the optimal phi *)

(InverseFourierTransform[g3SOOdd[x, 1.2], x, t])^2
phisp[t_] := (InverseFourierTransform[scaledg3[x, 1.2], x, t])^2
Plot[Re[phisp[t]], {t, -10, 10}, PlotRange -> All]


phisp2[t_] := 
 Re[1/(0.125` t - 0.75` t^3 + 1.` t^5)^2 (0.05526297606879339` + 
     1.8076800794049643`*^-18 I) E^((0.` - 
      1.` I) t) (t ((0.17677669529663687` + 
          0.17677669529663687` I) + (0.35355339059327373` + 
           0.35355339059327373` I) t - (0.35355339059327373` + 
           0.35355339059327373` I) t^2 - (0.7071067811865475` + 
           0.7071067811865475` I) t^3 + 
        E^((0.` + 
            1.` I) t) ((0.17677669529663687` - 
             
             0.17677669529663687` I) + (0.35355339059327373` - 
              0.35355339059327373` I) t - (0.35355339059327373` - 
              0.35355339059327373` I) t^2 - (0.7071067811865475` - 
              0.7071067811865475` I) t^3)) Sin[
       0.15` - 0.3` t] + (0.17677669529663687` - 
        0.17677669529663687` I) t Sin[
       0.15` + 0.3` t] + (0.17677669529663687` + 
        0.17677669529663687` I) E^((0.` + 1.` I) t)
       t Sin[0.15` + 0.3` t] - (0.35355339059327373` - 
        0.35355339059327373` I) t^2 Sin[
       0.15` + 0.3` t] - (0.35355339059327373` + 
        0.35355339059327373` I) E^((0.` + 1.` I) t)
       t^2 Sin[
       0.15` + 0.3` t] - (0.35355339059327373` - 
        0.35355339059327373` I) t^3 Sin[
       0.15` + 0.3` t] - (0.35355339059327373` + 
        0.35355339059327373` I) E^((0.` + 1.` I) t)
       t^3 Sin[
       0.15` + 0.3` t] + (0.7071067811865475` - 
        0.7071067811865475` I) t^4 Sin[
       0.15` + 0.3` t] + (0.7071067811865475` + 
        0.7071067811865475` I) E^((0.` + 1.` I) t)
       t^4 Sin[0.15` + 0.3` t] - 
     0.40241772554482175` E^((0.` + 0.5` I) t) t^2 Sin[0.2` t] + 
     1.609670902179287` E^((0.` + 0.5` I) t) t^4 Sin[0.2` t] + 
     0.2562367939226508` E^((0.` + 0.5` I) t) Cos[t] Sin[0.2` t] - 
     1.5374207635359047` E^((0.` + 0.5` I) t)
       t^2 Cos[t] Sin[0.2` t] + 
     2.0498943513812065` E^((0.` + 0.5` I) t)
       t^4 Cos[t] Sin[0.2` t] - 
     0.4024177255448217` E^((0.` + 0.5` I) t) t Sin[0.2` t] Sin[t] + 
     1.6096709021792868` E^((0.` + 0.5` I) t) t^3 Sin[0.2` t] Sin[t] +
      E^((0.` + 0.5` I) t)
       t Cos[0.2` t] (0.04051221478223531` - 
        0.16204885912894124` t^2 + (0.0810244295644706` t - 
           0.3240977182578824` t^3) Sin[t]))^2]
Plot[phisp2[t], {t, -5, 5}, PlotRange -> All]

(* functions are being re-defined for the sake of labels *)

SOEven[t_] := phisoeven[t];
Sp[t_] := phisp2[t];
SOOdd[t_] := phisoodd[t];
Naive [t_] := naivephi[t];
fns[t_] := {phisoeven[t], phisp2[t], phisoodd[t], naivephi[t]};
len := Length[fns[t]];
Plot[Evaluate[fns[t]], {t, -5, 5}, 
 PlotStyle -> {Normal, Dashed, Dotted, Thick}, 
 PlotLegends -> {"SO(Even)", "Sp", "SO(Odd)", "Naive"}]
  \end{verbatim}

\newpage

%
%
%
%
%
%
\end{appendices}

  \newpage

%

\end{document}